\newtheorem{theorem}{Theorem}[subsection]
\newtheorem{lemma}[theorem]{Lemma}
\newtheorem{corollary}[theorem]{Corollary}
\newtheorem{proposition}[theorem]{Proposition}
\theoremstyle{definition}
\newtheorem{definition}[theorem]{Definition}
\theoremstyle{remark}
\newtheorem{remark}[theorem]{Remark}
\newtheorem{remarks}[theorem]{Remarks}
\numberwithin{equation}{section}
\numberwithin{equation}{subsection}
\newcommand{\be}%
  {\protect\setcounter{equation}{\value{subsubsection}}}
  \newcommand{\ee}%
   {\protect\setcounter{subsubsection}{\value{equation}}}
\newcommand{\oSpec}{\operatorname{Spec}}
\newcommand{\bG}{\mathbf G}
\newcommand{\frG}{\mathfrak G}
\newcommand{\bX}{\mathbf X}
\newcommand{\bY}{\mathbf Y}
\newcommand{\bZ}{\mathbf Z}
\newcommand{\C}{\rm C}
\newcommand{\cN}{\mathcal N}
\newcommand{\cX}{\mathcal X}
\newcommand{\GL}{\mathbf {GL}}
\renewcommand{\b}{\mathbf{b}}
\newcommand{\A}{\mathcal A}
\newcommand{\D}{\mathcal D}
\renewcommand{\L}{\mathcal L}
\newcommand{\U}{\mathcal U}
\newcommand{\V}{\mathcal V}
\newcommand{\X}{\mathcal X}
\newcommand{\Y}{\mathcal Y}
\newcommand{\Z}{\mathcal Z}
\newcommand{\NN}{\mathbb N}
\newcommand{\K}{\mathcal{K}}
\renewcommand{\setminus}{\smallsetminus}
\renewcommand{\O}{\mathcal{O}}
\def\displaytimes_#1{\mathrel{\mathop{\times}\limits_{#1}}}
\def\displayotimes_#1{\mathrel{\mathop{\bigotimes}\limits_{#1}}}
 \def\ari[#1]{\ar@{^(->}[#1]}
 \def\are[#1]{\ar[#1]^{\txt{\'et}}}
 \def\areh[#1]{\ar[#1]|{\txt{$H$-eq}}^{\txt{\'et}}}
 \def\ars[#1]{\ar@{->>}[#1]}
 \newcommand{\dplus}{\ar@{}[d]|{\mbox{$\oplus$}}}
 \newcommand{\dtimes}{\ar@{}[d]|{\mbox{$\times$}}}
\def \A{{\mathcal A}}
\def \bA{\mathbb A}
\def \B{\mathcal B}
\def \C{\mathcal C}
\def \Cl{\mathbb C}
\newcommand{\codim}{{\rm codim}}
\def \colimm{\underset {m \rightarrow \infty}  {\hbox {lim}}}
\def \colimbm{\underset {(m_1, \cdots, m_k) \rightarrow \infty}  {\hbox {lim}}}
\def \colimi{\underset {i \rightarrow \infty}  {\hbox {lim}}}
\def \colimb{\underset {b \epsilon {\mathcal A}}  {\hbox {colim}}}
\def \colimA{\underset {a \epsilon {\mathcal A}}  {\hbox {colim}}}
\def \colimB{\underset {b \epsilon {\mathcal B}}  {\hbox {colim}}}
\def \colimalpha{\underset {\alpha}  {\hbox {colim}}}
\def \colima{\underset {a \eps \A}  {\hbox {colim}}}
\def \colimK.{\underset {\underset K^.  \rightarrow}  {\hbox {lim}}}
\def \colimU.{\underset {\underset U_.  \rightarrow}  {\hbox {lim}}}
\def \colimV{\underset {\underset V \rightarrow}  {\hbox {lim}}}
\def \compl{\, \, {\widehat {}}}
\def \D{\mathcal D}
\def \EG1{E{(G \times {\mathbb C}^*)}{\underset {G\times {\mathbb C}^*} 
\times}}
\def \EZ(s)1{E{(Z(s) \times {\mathbb C}^*)}{\underset {(Z(s)\times {\mathbb
C}^*)}  \times}}
\newcommand{\eps}{ \, {\boldsymbol\varepsilon} \,}
\def \EM(u){EM(u){\underset {M(u)}  \times}}
\def \EM(us){EM(u,s){\underset {M(u, s)}  \times}}
\newcommand{\oF}{\operatorname{F}}
\def \bG{\mathbf G}
\def \rmG{\rm G}
\def \G{{\mathcal G}}
\def \GL{\rm {GL}}
\def\holimD{\mathop{\textrm{holim}}\limits_{\Delta }}
\def\holim{\mathop{\textrm{holim}}\limits}
\def \holimm {\underset {\infty \leftarrow m}  {\hbox {holim}}}
\def \holimbm {\underset {\infty \leftarrow (m_1, \cdots, m_k)}  {\hbox {holim}}}
\def \holimn {\underset {\infty \leftarrow n}  {\hbox {holim}}}
\def \holims {\underset {\underset {s \eps \K}  \leftarrow }  {\hbox {holim}}}
\def \holimt {\underset {\underset {t \eps \K}  \leftarrow }  {\hbox {holim}}}
\def \H{\mathbb H}
\def \rmH{\rm H}
\def \invlim1{\underset {\infty \leftarrow q}  {\hbox {lim}}^1}
\def \in{\mbox{ {\rm {in}} }}
\def \K{\mathcal K}
\def \bK{\mathbf K}
\def \bK{\mathbf K}
\newcommand{\ok}{\operatorname{k}}
\newcommand{\oK}{\operatorname{K}}
\def \L3{\Lambda \times \Lambda \times \Lambda}
\def \L2{\Lambda \times \Lambda}
\def \limalpha{\underset {\infty \leftarrow {\alpha}}  {\hbox {lim}}}
\def \lim{\underset \leftarrow  {\hbox {lim}}}
\def \longright2arrow{{\overset \longrightarrow  {\overset {} 
\longrightarrow}}}
\def \L{L\times \Cl ^*}
\def \Map{\underline {Map}}
\def \Map{{\mathcal M}ap}
\def \bm{\mathbf m}
\def \N{\mathcal N}
\def \NN{\mathbb N}
\def \ndots{(n_1, \cdots, n_k)}
\def \bn{\mathbf n}
\def \O{{\mathcal O}}
\def \P{\mathbb P}
\newcommand{\p}{\mathfrak p}
\def \Spt{\rm {Spt}}
\def \ra{\rightarrow}
\def \RG^{R(G)^{\hat {}}\ }
\def \res{respectively}
\def \Sm{\rm {Sm}}
\def \rmS{\rm S}
\def \\oSpeck{{\hbox {\oSpec}\ \ k}}
\def \o\oSpec{{\rm {\oSpec}}\, }
\def\Spt{\rm {\bf Spt}}
\newcommand{\Psh}{\operatorname{Psh}}
\newcommand{\SPrsh}{\operatorname{SPrsh}}
\newcommand{\Sch}{\operatorname{Sch}}
\newcommand{\Sh}{\operatorname{Sh}}
\def \bs{\mathbf s}
\def \topGcoh*{^{top, *} _{G}}
\def \topGho*{ _{top,*} ^{G}}
\newcommand{\Top}{\operatorname{Top}}
\def \U{\mathcal U}
\def \V{\mathcal V}
\def \cX{\mathcal X}
\def \bX{\mathbf X}
\def \cY{\mathcal Y}
\def \Z(s){Z(s) \times {\mathbb C}^*}
\def \Z{\mathbb Z}
\def \cZ{\mathcal Z}
\begin{document}

\title{Galois descent for completed Algebraic K-theory}
%    Information for first author
\author{Gunnar Carlsson\\
Department of Mathematics, \\
Stanford University, Stanford, California, 94305, USA \renewcommand{\footnotemark}{}%
  \footnote{The first author was partially supported by NSF DMS-0406992.}\\
\texttt{gunnar@math.stanford.edu \ {and}}\\
{Roy Joshua}\\
Department of Mathematics, \\
Ohio State University, Columbus, Ohio, 43210, USA. \renewcommand{\footnotemark}{}%
  \footnote{The second author was partially supported by NSF DMS-1200284.}%
  \footnote{\textit{2010 Mathematics Subject Classification.} 14L30, 19E08}%
  \footnote{\textit{Keywords and phrases.} Galois descent, Derived completions,  K-theory.}
\\
\texttt{joshua@math.ohio-state.edu}\\
{}\\
\\
\texttt{}}
\date{}
\maketitle

\begin{abstract}
In this paper we consider the problem of Galois descent for suitably completed algebraic K-theory of fields.
One of the main results is a suitable form of rigidity for Borel-style  generalized equivariant cohomology with
respect to certain spectra. In order to apply this to the problem at hand, we need to invoke
a derived Atiyah-Segal completion theorem for pro-groups. In the present paper, the authors apply such a derived
completion theorem proven by the first author elsewhere. These two results
 provide a proof of the Galois descent problem for equivariant algebraic K-theory as formulated by the
first author, at least when restricted to the case where the absolute Galois groups are 
pro-$l$ groups for some prime $l$ different from the characteristic of the base field and the K-theory spectrum is
completed at the same prime $l$. Work in progress hopes to remove  these restrictions.
\end{abstract}
\maketitle

%\vskip .3cm 
%\input xypic
\vfill \eject
%%%%%%%%%%%%%%%%%%%%%%%%%%%%%%%%%%%%%%%%%%%%%%%%%%%%%%%%%%%%%%%%%%%%%
%%%%%%%%%%%%%%%%%%%%%%%%%%%%%%%%%%%%%%%%%%%%%%%%%%%%%%%%%%%%%%%%%%%%%
%% INTRODUCTION
%%%%%%%%%%%%%%%%%%%%%%%%%%%%%%%%%%%%%%%%%%%%%%%%%%%%%%%%%%%%%%%%%%%%%
\section{ Introduction}
\label{intro}
The main goal of this paper is to provide a proof of a conjecture, at least in several important cases, due to the second author
relating the equivariant K-theory of the algebraic closure of geometric fields $\oF$ (i.e. fields containing an algebraically closed subfield $\ok$)
with the equivariant K-theory of $\ok$, where the group action in both cases are with respect to the absolute Galois group of
$\oF$. 
\vskip .3cm
Our approach to a solution of this conjecture has two main steps: one part is
a rigidity theorem for Borel-style generalized equivariant cohomology with respect to spectra
which are $l$-primary torsion for a fixed prime $l$ different from the characteristic of the base field. 
This rigidity theorem may be viewed as an extension of well-known rigidity theorems in the non-equivariant
setting. The strategy of the proof is by doing a reduction to the non-equivariant case by means
of a sheaf-theoretic argument somewhat similar to using a  Leray-spectral sequence. 
The main result here is Theorem ~\ref{main.thm1.1} which is also restated as Theorem ~\ref{main.thm.1}.
\vskip .3cm
The remaining part of the proof is to show that suitable derived completions of the $l$-primary equivariant
K-theory spectrum of the algebraic closure of a geometric field $\oF$,
(i.e. a field containing an algebraically closed subfield $\ok$), equivariant with respect to the action of the absolute Galois group of $\oF$ is 
weakly-equivalent to its Borel-style
equivariant $l$-primary K-theory spectrum and that a similar weak-equivalence holds for the corresponding
$l$-primary equivariant K-theory spectrum for the field $\ok$ with respect to the trivial action of the
above absolute Galois group. Combining this result with the rigidity statements proven in Theorem ~\ref{main.thm1.1}
therefore, completes the proof of Theorem ~\ref{Carl.conj}. 
\vskip .3cm
In \cite{C13}, the first author has already established a form of this derived Atiyah-Segal completion theorem, under
certain restrictions. The  Borel-style generalized equivariant cohomology defined in \cite{C13} makes use of a different model
for the classifying spaces of profinite groups, i.e. different from the geometric classifying spaces commonly used
in the motivic contexts: see, for example, \cite{MV} or \cite{Tot}. Observe that the rigidity theorem above is proven 
making use of these geometric classifying spaces. Therefore, a second main result in this paper is to relate these
two forms of  Borel-style generalized equivariant cohomology for actions of profinite groups. 
This appears in Theorem ~\ref{main.thm.2}. With this result in place, the rigidity theorem in Theorem ~\ref{main.thm.1} 
provides a rigidity theorem for the  Borel-style generalized equivariant cohomology considered in \cite{C13}.
Combining these two steps, provides a proof of the conjecture under the hypotheses of \cite{C13}. 
\vskip .3cm
While the hypotheses of \cite{C13} are fairly general, they do not address the conjecture in its most general form
for geometric fields. Work in progress by the authors is expected to address this.
\vskip .3cm
Here is a brief outline of the paper. We discuss the basic framework and the main results in the rest of this section. The second section is devoted
to a discussion of Borel-style generalized equivariant cohomology theories defined with respect to spectra. Since this is carried out using
geometric models for the classifying spaces of algebraic groups, we discuss these in the framework of {\it admissible gadgets} as in 
\cite{MV} and also \cite{K}. We first  define such generalized equivariant cohomology theories  for the action of
a single algebraic group. Then we discuss actions of pro-groups. We prove that the generalized equivariant cohomology theories so defined
are independent of the choice of an inverse system of geometric classifying spaces: this proof makes use of the motivic Postnikov towers
as in \cite{Voev2} and \cite{Lev}. 
\vskip .3cm
The third section is devoted to a discussion of rigidity, where we establish rigidity for Borel-style generalized equivariant cohomology theories,
the main result being  Theorem ~\ref{main.thm.1}. One may view this theorem as an equivariant version of
well-known rigidity theorems: see \cite{Sus}. 
In fact the proof is by reducing rigidity for such equivariant cohomology theories to rigidity for the corresponding non-equivariant cohomology theories.
The fourth section is devoted to a comparison with the generalized equivariant cohomology theories defined using a tower construction as in \cite{C13}.
With such a comparison theorem, we obtain rigidity for the generalized equivariant cohomology theories defined using the tower construction.
\vskip .3cm
The fifth section is devoted to a proof of Theorem ~\ref{Carl.conj} by combining Theorem ~\ref{main.thm.2} along with a derived Atiyah-Segal completion theorem proved in \cite{C13} using 
the generalized equivariant 
cohomology theories
defined using the tower construction. 
\subsection{\bf Statement of results}
We adopt standard conventions on  motivic homotopy theory. We fix a base scheme, which for the most part will be  
a {\it perfect infinite} field $\ok$, and 
restrict to the
category of smooth schemes over this base scheme. For the most part, we will restrict to the case where the 
field $\ok$ is in fact algebraically closed.
But some of our constructions and intermediate results hold in the above more general framework.
\vskip .3cm
The latter category will be denoted $\Sm/\ok$. For the most part the site we put on $\Sm/\ok$ will be the Nisnevich site,
which will be denoted $\Sm/\ok_{Nis}$. The unstable model structure on pointed simplicial presheaves on this site, will be the {\it local projective} model structure 
of \cite{Bl} where the generating 
cofibrations are injective maps of simplicial presheaves of the form $\Lambda [n]_+ \wedge U_+ \ra \Delta [n]_+ \wedge U_+$ and
$\delta \Delta[n]_+ \wedge U_+ \ra \Delta[n]_+ \wedge U_+$ where $\Delta[n]$, $\Lambda [n]$ and $\delta \Delta [n]$ are the obvious
simplicial sets and $U$ is an object of the chosen site.  In the local projective model structure, the weak-equivalences will be stalk-wise weak-equivalences
and fibrations will be characterized by the right-lifting property with respect to trivial cofibrations. The 
 ${\mathbb A}^1$-model structure may be obtained 
by localizing the local projective model structure with respect to  ${\mathbb A}^1$-equivalences: see  \cite{Hov-2} (or \cite{Dund2} or \cite{CJ1})
 for more details. 
\vskip .3cm
 One starts with the 
${\mathbb A}^1$-localized local projective model category of
 pointed simplicial presheaves on the Nisnevich site on $\Sm/\ok$. Then an $S^1$-spectrum in this category is a
system of pointed simplicial presheaves $\{E_n|n \ge 0\}$ equipped with maps $S^1 \wedge E_n \ra E_{n+1}$ which 
are compatible as $n$ varies. We will let $\Spt_{{\rm S}^1}(\ok)$ denote the category of such ${\rm S}^1$-spectra. 
This is a symmetric monoidal category with respect to the operation of smash product, denoted $\wedge$: $\Sigma$,
the sphere spectrum, will denote the unit of this symmetric monoidal category.
We will provide $\Spt_{{\rm S}^1}(\ok)$ with the {\it stable} model structure
as discussed in \cite[section 5]{Hov-2}.
The presheaves of spectra $E$ that we consider in the paper will always be assumed to satisfy the following  hypotheses (see 
\cite[section 2.1]{Lev} and also \cite[Definition 1.2]{Yag}):
\subsubsection{}
\label{spectra.props}
\begin{enumerate}  [\rm(i)]
\item $E$ is $N$-connected for some integer $N$. (The $N$ usually will be $-1$.)
     \item $E$ is homotopy invariant, i.e. for each smooth scheme $X$ of finite type over $\ok$, and a closed subscheme $Y$ (not necessarily smooth),
the map $\Gamma _Y(X, E) \ra \Gamma_{Y \times \bA^1}(X \times {\mathbb A}^1, E)$
is a weak-equivalence.
\item $E$ satisfies Zariski excision. Recall Zariski excision means the following: for each open immersion $U \ra X$ of schemes
in $\Sm/\ok$ with $W \subseteq X$ closed (not necessarily smooth) and contained in $U$, the induced map $\Gamma_W(X, E) \ra \Gamma_W(U, E)$ is 
a weak-equivalence.
 \item $E$ satisfies  Nisnevich excision.  Recall that Nisnevich excision means the following: for each $X' \ra X$ an \'etale map in $\Sm/\ok$, and $ W \subseteq X$ closed
  (not necessarily smooth),  $W' = W{\underset X \times}X'$ so that the induced map $W' \ra W$ is an isomorphism, the induced map $\Gamma_W(X, E) \ra \Gamma_{W'}(X', E)$ is a weak-equivalence.
\item $E$ satisfies Zariski localization, i.e. if $X$ is  in $\Sm/\ok$ and $Y \subseteq X$ is a closed
subscheme (not necessarily smooth), one obtains the stable cofiber sequence: $\Gamma_Y(X, E) \ra \Gamma (X, E) \ra \Gamma (X-Y, E)$.
\item $E$ has the property that the ${\mathbb P}^1$-suspension induces a weak-equivalence. i.e. If $X$ is $\in \Sm/\ok$ and $Y$ is a closed subscheme 
(not necessarily smooth), then the  map $\Gamma_Y(X, E) \ra \Gamma_{Y \times {0}}(X \times \bA^1, E)$
induced by the ${\mathbb P}^1$-suspension is a weak-equivalence.
\item $E$ has the homotopy purity property. i.e. Let $Z \subseteq Y \subseteq X$ be closed immersions of schemes in $\Sm/\ok$. Let $\cN$ denote the
normal bundle associated to the closed immersion $Y \subseteq X$. Let $B(X,Y)$ denote the deformation space obtained by blowing up
$Y\times 0 $ in $X \times \bA^1$. Then $\cN \ra B(X, Y)$ and $X \ra B(X, Y)$ are closed immersions and induce weak-equivalences:
\[\Gamma_Z(N, E) \leftarrow \Gamma_{Z \times \bA^1}(B(X, Y), E) \ra \Gamma_Z(X, E).\]
\item (Extension to inverse systems of smooth schemes with affine structure maps.) If $\{X_{\alpha}| \alpha\}$
denotes an inverse system of smooth schemes in $\Sm/\ok$ with affine structure maps, we let $X= \limalpha X_{\alpha}$.
Then we define $\Gamma (X, E) = \colimalpha \Gamma(X_{\alpha}, E)$. We require that, so defined, the left-hand-side
is independent (up to weak-equivalence) of the choice of the inverse system $\{X_{\alpha}|\alpha\}$ whose
inverse limit is $X$.
\item (Invariance under purely inseparable maps.) If $f:W' \ra W$ is a purely inseparable map of smooth schemes, 
$Y \subseteq W$ is a closed subscheme with $Y'=Y{\underset W \times}W'$, then the induced map 
$f^*:\Gamma_Y(W, E) \ra \Gamma_{Y'}(W', E)$ is a weak-equivalence provided
 the presheaf of homotopy groups of $E$ are $l$-primary torsion, where $l \ne char(k)$. 
    \end{enumerate}
Observe that the Zariski localization and Nisnevich excision properties imply that the spectrum has 
what is often called the Brown-Gersten property and therefore cohomological  descent on the Nisnevich site. Therefore, one 
obtains the weak-equivalence of  spectra for any $E \eps \Spt_{\rmS^1}(\ok)$: 
\be \begin{equation}
     \label{hypercoho.vs.mappingspectra}
  \H(X, E) \simeq \Map(\Sigma_{S^1}X, E)
\end{equation} \ee
\vskip .3cm \noindent
where the right-hand-side is the usual mapping spectrum, and where $\H(X, E)$ denotes the hypercohomology
spectrum computed on the Nisnevich site. One may also verify that spectrum representing algebraic 
K-theory (viewed as an ${\rm S}^1$-spectrum), satisfies all of the above properties.  We show below in Lemma 
~\ref{smah.preservers.good.props} that if $E$ is a spectrum satisfying the above properties, then  the spectrum
\[{\overset m {\overbrace {{{E {\overset L {\underset {\Sigma} \wedge}} H(Z/{\ell})}} \cdots {{E {\overset L {\underset {\Sigma} \wedge}} H(Z/{\ell})}}}}}\]
\vskip .3cm \noindent
also satisfies the same properties, where $l$ is a fixed prime different from the characteristic of $\ok$ and the right-hand-side involves 
the derived smash product of the Eilenberg-Maclane spectrum over the sphere spectrum as in \cite{C11}.
\begin{remarks} (i) Observe that the properties above, except for Nisnevich excision and the last property, together with the hypothesis that the homotopy groups of
 the spectrum $E$ are $l$-primary torsion are the hypotheses needed in \cite{Yag} to ensure rigidity for the cohomology theory represented by the
spectrum $E$.
\vskip .3cm
(ii) In \cite[section 2.1, (A3)]{Lev}, there is an additional property that is required of the spectra. However, that applies only to the case where the base field $\ok$ would be finite. Since we always
assume the base field $\ok$ is infinite, we do not need to consider this last property.
\vskip .3cm
(iii) There are different, but related notions of spectra in the motivic context. The ${\rm S}^1$-spectra here mean spectra in the usual sense.
On replacing ${\rm S}^1$ by ${\mathbb P}^1$ one obtains  ${\mathbb P}^1$-spectra. These two categories are related by considering
the intermediate category of bi-spectra where there are two suspension functors: one, the simplicial suspension with respect to ${\rm S}^1$ and the
other, the suspension with respect to ${\mathbb G}_m$. See \cite[section 8]{Lev} or \cite{Voev3} for a discussion of these different categories. A key observation for us
is that the spectrum representing algebraic K-theory is a ${\mathbb P}^1$-spectrum and therefore one may consider the associated bi-spectra
 to apply results on ${\rm S}^1$-spectra to it.
\end{remarks}

A main result in this paper is the following rigidity statement for Borel style generalized equivariant cohomology. 
Let $\ok$ denote a fixed algebraically closed
field, $\oF$ a field containing $\ok$ and let $\bar \oF^{sep}$ ($\bar \oF$) denote a separable closure of $\oF$
(an algebraic closure of $\oF$, \res). We make the following observations that will enable us to simplify
our constructions and arguments. 
\be \begin{enumerate}[\rm(i)]
\label{field.simplifications}
     \item 
    First, if $\oF$ is of {\it infinite transcendence degree} over $\ok$, one may write $\oF$
as a direct limit of subfields $\oF_{\alpha}$ of finite transcendence degree over $k$. Then the algebraic closure
of $\oF$ will be the direct limit of the algebraic closures of the subfields $\oF_{\alpha}$. This  observation
enables us to effectively reduce to considering fields $\oF$ of finite transcendence degree over $\ok$, i.e.
to generic points of schemes in $\Sm/k$.
\item 
Second, the algebraic closure of a field $\oK$ identifies with the splitting field of all polynomials 
in one variable over $\oK$. Therefore, it may be written as a direct limit of a sequence of finite {\it normal}
extensions $\oK_{\beta}$, each of which will be the composition of a finite separable and a finite purely 
inseparable extension. Moreover the automorphism  group of each such extension will identify with the Galois group
of the corresponding finite separable extension. Finally making use of the property: 
{\it Invariance under purely inseparable maps} considered above, together with the above observations, make it
unnecessary to distinguish between the separable closure and the algebraic closure of any field under
consideration.
\end{enumerate}
For example, $\oF$ could be the field of 
rational functions
on a $\ok$-variety and $\bar \oF$ an algebraic closure of $\oF$.
 Let $\rmG _{\oF}=\{\rmG_a|a \eps \A \}$  denote the absolute Galois group 
of the field $ \oF$. We then view $\o\oSpec \, \ok$ with the trivial action of $\rmG _{\oF}$ as a pro-scheme. 
$\o\oSpec \bar \oF = \{(\o\oSpec \, \oF_a)| a \eps \A\}$
is clearly a pro-scheme together with an action of $\rmG _{\oF}$ on $\o\oSpec \, \bar \oF$, i.e. a family of compatible actions by 
$\rmG _a = Gal_{\oF}(\oF_a)$ on $\oF_a$ which is a finite Galois extension of $\oF$ with Galois group $\rmG _a$. Let $\bY= \{Y_a| a \eps \A\}$
denote a pro-object of smooth schemes of finite type over $\ok$ provided with compatible actions by $\rmG _{\oF} = \{\rmG _a| a \eps \A\}$.
The $\rmG _{\oF}$-equivariant hypercohomology
 $\H_{\rmG _{\oF}}$ is defined in ~\ref{equiv.ge.coho.profinite.grps}. {\it Observe that this is always computed on the Nisnevich site}.
 \begin{theorem}
\label{main.thm.1}Assume that the base field $\ok$ is algebraically closed. 
 Let $E$ denote any ring spectrum in $\Spt_{S^1}(\ok)$ satisfying the hypotheses in ~\ref{spectra.props}. Then the Galois-equivariant map $\o\oSpec \, \bar \oF \ra \o\oSpec\, \ok$ 
(where the Galois group acts trivially on $\o\oSpec \, \ok$)
induces a weak-equivalence in the following cases:
\be \begin{align}
\H_{\rmG_{\oF}}(\oSpec \, \ok \times \bY, E/\ell) &\ra \H_{\rmG_{\oF}}(\oSpec \, \bar F \times \bY, E/\ell), \notag\\
\H_{\rmG_{\oF}}(\oSpec \, \ok \times \bY, E{\overset L {\underset {\Sigma} \wedge}}\H(\Z/{\ell})) &\ra 
\H_{\rmG_{\oF}}(\oSpec \, \bar \oF  \times \bY, E{\overset L {\underset {\Sigma} \wedge}}\H(\Z/{\ell})) \mbox{ and } \notag\\
\H_{\rmG_{\oF}}(\oSpec \, \ok \times \bY, E{\overset m {\overbrace {{{{\overset L {\underset {\Sigma} \wedge}} H(Z/{\ell})}} \cdots {{{\overset L {\underset {\Sigma} \wedge}} H(Z/{\ell})}}}} }) 
  &\ra \H_{\rmG_{\oF}}(\oSpec \, \bar \oF  \times \bY, E {\overset m {\overbrace {{{ {\overset L {\underset {\Sigma} \wedge}} H(Z/{\ell})}} \cdots {{ {\overset L {\underset {\Sigma} \wedge}} H(Z/{\ell})}}} }   }) \notag
\end{align} \ee
 for any positive integer $m$.
\vskip .3cm \noindent
Here (as elsewhere in the paper),  $\ell$ is a prime 
different from the characteristic of $\ok$. $E/{\ell} = E \wedge M(\ell)$ denotes the smash product of $E$ with a 
mod$-\ell$ Moore-spectrum, $\H(\Z/{\ell})$ denotes the Eilenberg-Maclane spectrum with
the non-trivial homotopy group in degree $0$ where it is $\Z/{\ell}$. Given a commutative ring spectrum $R$ and module spectra $M$, $N$ over $R$,
$M{\overset L{\underset R \wedge}}N$ denotes the derived smash-product defined as $M{\underset R \wedge}\tilde N$, where $\tilde N \ra N$ is
a suitable cofibrant replacement of $N$. 
 In particular, the above weak-equivalences hold with the spectrum $E={\mathbf K}$ which denotes the 
 spectrum  representing 
algebraic K-theory. 
 \end{theorem}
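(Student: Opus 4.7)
Following the strategy announced in the introduction, the plan is to reduce the equivariant rigidity statement to its well-known non-equivariant counterparts via a sheaf-theoretic Leray-type argument on the geometric Borel construction in the Nisnevich topology.

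\medskip

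\emph{Step 1: reduction to finite Galois levels.} Since $\rmG_{\oF}=\{\rmG_a|a\eps\A\}$ is pro-finite and the equivariant hypercohomology $\H_{\rmG_{\oF}}$ is constructed, via the admissible-gadget models for $B\rmG_a$ discussed in Section~2, as a homotopy limit over the inverse system of finite quotients, it suffices to prove at each finite level that the natural map
\[
\H_{\rmG_a}\bigl(\oSpec\, \ok \times Y_a,\, E'\bigr) \;\longrightarrow\; \H_{\rmG_a}\bigl(\oSpec\, \oF_a \times Y_a,\, E'\bigr)
\]
is a weak-equivalence, where $\oF_a/\oF$ is the corresponding finite normal extension with Galois group $\rmG_a$, $Y_a$ is the matching level of the pro-object $\bY$, and $E'$ stands for any of $E/\ell$ or the iterated smash products with $H(\Z/\ell)$ appearing in the statement. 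By the lemma announced above (that iterated derived smash products with $H(\Z/\ell)$ preserve the hypotheses of \ref{spectra.props}), each such $E'$ retains all the properties of \ref{spectra.props}; in particular it is $\ell$-primary torsion and has Nisnevich descent, yielding the identification $\H(X,E')\simeq \mathrm{Map}(\Sigma_{S^1}X, E')$ in the stable motivic category.

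\medskip

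\emph{Step 2: sheaf-theoretic reduction to stalks.} With the Borel model $W_n \times^{\rmG_a}(-)$, where $\{W_n\}$ is an admissible gadget for $E\rmG_a$, the $\rmG_a$-equivariant hypercohomology of $\oSpec\, \oF_a \times Y_a$ is the homotopy limit (in $n$) of the ordinary hypercohomology of $W_n \times^{\rmG_a}(\oSpec\, \oF_a \times Y_a)$, and similarly over $\oSpec\, \ok$. The equivariant structure map $\oSpec\, \oF_a \to \oSpec\, \ok$ induces a morphism of Borel quotients; because $\rmG_a$ acts freely on $\oSpec\, \oF_a$ with quotient $\oSpec\, \oF$, this morphism is Nisnevich-locally the base change by $\oSpec\, \oF_a \to \oSpec\, \oF$ of a finite \'etale cover of the base $W_n \times^{\rmG_a} Y_a$. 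Invoking the Nisnevich/Brown--Gersten descent guaranteed by (iii)--(v) of \ref{spectra.props}, the relevant hypercohomology spectra are computed by a Leray-type spectral sequence whose $E_2$-terms are Nisnevich cohomology of $W_n \times^{\rmG_a} Y_a$ with coefficients in the presheaves of stalkwise $E'$-spectra of the two Borel quotients. Comparing coefficient presheaves, it then suffices to show that, at each strict Henselian local scheme $\oSpec\, \O^h_v$ over $W_n \times^{\rmG_a} Y_a$, the base-change map $\oSpec\,(\O^h_v \otimes_{\ok}\bar{\oF})\to\oSpec\, \O^h_v$ induces a weak-equivalence on $E'$-hypercohomology.

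\medskip

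\emph{Step 3: non-equivariant rigidity and main obstacle.} This stalkwise equivalence is exactly the content of classical rigidity for $\ell$-torsion cohomology theories on smooth schemes (Suslin--Voevodsky, Gabber, and in the present motivic framework Yagunov), applied to the spectra $E/\ell$ and $E\wedge^{L}_{\Sigma}H(\Z/\ell)^{\wedge m}$, which are $\ell$-primary torsion for $\ell\neq\mathrm{char}(\ok)$; property (ix) of \ref{spectra.props} allows us to replace the separable closure appearing in rigidity by the full algebraic closure $\bar{\oF}$. The main obstacle, and the step demanding the most care, is the sheaf-theoretic reduction of Step~2: one must verify that hypercohomology of the ind-Borel construction in the motivic Nisnevich setting is genuinely computable by descent over the quotient base $W_n \times^{\rmG_a} Y_a$, and that the fibers appearing in the resulting Leray spectral sequence are indeed strict Henselian spectra on which non-equivariant rigidity applies. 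Once this descent is established, Yagunov's rigidity yields the fiberwise weak-equivalences, the spectral sequence promotes them to weak-equivalences at each finite level, and taking the homotopy limit over the pro-system $\{\rmG_a\}$ delivers the three asserted equivalences, which specialize to the case $E=\mathbf{K}$.
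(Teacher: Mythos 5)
Your plan is essentially the paper's own proof: a Leray-type pushforward to the Borel quotient of $\bY$, identification of the Nisnevich stalks of $R\pi_*E$ as $\H(\oSpec B\times_k X,E)$ with $\oSpec B$ a Henselian local scheme independent of $X$ (using freeness of the finite-group action to split the fiber finite-\'etale cover), and then Yagunov-style non-equivariant rigidity for the extension $\ok\to\bar\oF$ of algebraically closed fields, followed by colimits over the Galois levels and the homotopy limit over the classifying-space approximations. The only caveats are minor: the equivariant hypercohomology is a filtered \emph{colimit} (not a homotopy limit) over the finite quotients with the homotopy limit taken only over the gadget indices, the Nisnevich points are Henselian rather than strict Henselian, and the descent-for-pro-sites step you flag as the main obstacle is exactly what the paper's technical propositions on sites for pro-schemes and the functor $\Phi$ are there to supply.
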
 \ee
\vskip .3cm
With this theorem in place, we are able to continue on the work of the first author as in \cite{C13} where a derived completion
theorem is proven for the actions pro-finite Galois group, resulting in the following theorems which prove the 
above mentioned conjecture in many important cases. 
\vskip .3cm
For the rest of the paper, we will assume that $\rmG_{\oF}$ is a  pro-$l$ group. Let $EG^C_{\oF}$ denote the
pro-scheme defined in \cite{C13}: see ~\ref{EGC}. 
\be \begin{theorem}
\label{main.thm.2}
 (i)   Assume $E$ denotes a ring spectrum and the base field $\ok$ is algebraically closed as in the last theorem.
Then one obtains the weak-equivalence:
\[\H_{\rmG_{\oF}}(\chi \times EG^C_{\oF}, E) \simeq \Map(\chi \times_{\rmG_{\oF}} EG^C_{\oF}, E)  \]
where $\chi = \{\cX_a| a\}$ is a pro-scheme with an action by $\rmG_{\oF}= \{\rmG_a|a \}$ and with each $\cX_a \eps\Sm/k$ as 
in Definition ~\ref{Hypercoho.Carl}. The left-hand-side is the
Borel-style generalized equivariant cohomology defined in ~\ref{equiv.ge.coho.profinite.grps} with respect to the spectrum $E$, $\Map$ denotes the
obvious mapping spectrum and the term on the right-hand-side is defined as in Definition ~\ref{Hypercoho.Carl}.
\vskip .3cm
(ii) Assume $E$ is a ring spectrum as in (i). Then we  obtain the weak-equivalence:
\[ \Map(\oSpec \, k \times_{\rmG_{\oF}} EG^C_{\oF}, E_{\ell}) \simeq \Map(\oSpec \, \bar F \times_{\rmG_{\oF}} EG^C_{\oF}, E_{\ell})\]
where $E_{\ell}$ denotes one of the following spectra: $ E/\ell = E\wedge M(\ell)$, $ E{\overset L {\underset {\Sigma} \wedge}}\H(\Z/{\ell})$ or
$ E{\overset m {\overbrace {{{{\overset L {\underset {\Sigma} \wedge}} H(Z/{\ell})}} \cdots {{{\overset L {\underset {\Sigma} \wedge}} H(Z/{\ell})}}}} }$ for some $m \ge 1$.
  \end{theorem}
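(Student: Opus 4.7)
For part (i), the approach is to reduce the statement to the descent identification (\ref{hypercoho.vs.mappingspectra}), which asserts that Nisnevich hypercohomology of a smooth scheme with coefficients in any $E$ satisfying the properties in ~\ref{spectra.props} agrees with the usual mapping spectrum into $E$. I would first rewrite $\H_{\rmG_{\oF}}(\chi \times EG^C_{\oF}, E)$, as defined in ~\ref{equiv.ge.coho.profinite.grps} via the admissible-gadget model, as the ordinary Nisnevich hypercohomology of the pro-scheme Borel construction $\chi \times_{\rmG_{\oF}} EG^C_{\oF}$. The key input is the independence of $\H_{\rmG_{\oF}}(-, E)$ from the choice of inverse system of geometric classifying spaces—proved earlier via motivic Postnikov towers—applied after smashing with $\chi$ to compare the admissible-gadget model with the tower model from \cite{C13}. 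The formula (\ref{hypercoho.vs.mappingspectra}), together with property (viii) of ~\ref{spectra.props}, which extends the mapping-spectrum description to inverse systems of smooth schemes with affine transition maps, then yields the desired equivalence.

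For part (ii), the plan is to combine part (i) with the rigidity Theorem ~\ref{main.thm.1}. Applying part (i) with $\chi = \oSpec\, \ok$ and with $\chi = \oSpec\, \bar \oF$ respectively gives
\[
\Map(\oSpec\, \ok \times_{\rmG_{\oF}} EG^C_{\oF}, E_{\ell}) \simeq \H_{\rmG_{\oF}}(\oSpec\, \ok \times EG^C_{\oF}, E_{\ell})
\]
and a similar identification over $\bar \oF$. The comparison map induced by $\oSpec\, \bar \oF \to \oSpec\, \ok$ on the equivariant-hypercohomology sides is precisely the map appearing in Theorem ~\ref{main.thm.1} with the choice $\bY = EG^C_{\oF}$ and with $E$ replaced by one of $E/\ell$, $E {\overset L {\underset {\Sigma} \wedge}} \H(\Z/\ell)$, or its $m$-fold smash power. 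Theorem ~\ref{main.thm.1} therefore delivers the desired weak-equivalence, provided $EG^C_{\oF}$ fits its hypotheses, namely that each stage $EG^C_a$ is a smooth $\ok$-scheme of finite type and that the $\rmG_{\oF}$-action is by compatible actions of the finite quotients $\rmG_a$; this is built into the tower construction recalled in ~\ref{EGC}.

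The hard part is expected to be part (i), specifically the identification of the Borel construction $\chi \times_{\rmG_{\oF}} EG^C_{\oF}$ with its counterpart built from the admissible-gadget classifying spaces used in the definition of $\H_{\rmG_{\oF}}$. The two models differ in the way they approximate the classifying space of the pro-group, and the comparison must be made compatibly with the Galois actions level by level and tracked through the ${\mathbb A}^1$-localized Nisnevich-local model structure; this is exactly where the motivic Postnikov tower arguments of \cite{Voev2} and \cite{Lev} enter. Once the two Borel constructions are matched in the appropriate homotopy category, the passage to mapping spectra is routine via (\ref{hypercoho.vs.mappingspectra}) and property (viii) of ~\ref{spectra.props}, and part (ii) follows formally from Theorem ~\ref{main.thm.1} with no extra ingredients beyond verifying the hypotheses on $EG^C_{\oF}$.
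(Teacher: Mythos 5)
Your proposal matches the paper's argument: part (i) is exactly Theorem~\ref{main.thm.2} of Section~4, obtained by replacing $EG^C_{\oF}$ with the inverse system $\{\cY_a\}$ of smooth schemes with free $\rmG_a$-actions furnished by \cite[Proposition 5.9]{C13} and then invoking Theorem~\ref{uniqueness.pro.grp.actions}(ii) (the Postnikov-tower comparison, which collapses the geometric Borel construction onto $\cX_a\times_{G_a}\cY_a$ precisely because the action on $\cY_a$ is free) together with the descent identification \eqref{hypercoho.vs.mappingspectra}; part (ii) is then Theorem~\ref{rigidity.Carlsson.style}, i.e.\ part (i) plus Theorem~\ref{main.thm.1} applied with $\bY=\{\cY_a\}$. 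The only caveat is that the finite-type, smooth, free hypotheses are supplied by that realization $\{\cY_a\}$ (and Definition~\ref{Hypercoho.Carl}) rather than by the raw stages of the tower in \eqref{EGC}, but this does not change the argument.
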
 \ee
\vskip .3cm
Making use of the above weak-equivalences,  we obtain the main result of the paper which is the following corollary.
\begin{theorem}
\label{Carl.conj}
 Assume the above situation. Then we obtain a  weak-equivalence
\vskip .3cm
$K(\oSpec \, \ok, {\rm G}_{\oF}) \compl_{I_{\rmG, \ell}} \simeq  K(\oSpec \, \oF) \compl_{\rho_{\ell}}$ 
\vskip .3cm \noindent
where $I_{\rmG, \ell}$ denotes the derived
completion along $I_{\rmG, \ell}:K(\oSpec \ok, \rmG) {\overset {I_{\rmG}} \ra} K(\oSpec \ok) \ra K(\oSpec \ok) {\overset L {\underset {\Sigma} \wedge}} H(\Z/{\ell})$ and $\rho_l$ denotes 
completion along the map $\rho_{\ell}:\Sigma \ra H(\Z/{\ell})$ (i.e. the completion at the prime $\ell$.)
\end{theorem}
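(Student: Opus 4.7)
The plan is to assemble Theorem \ref{Carl.conj} as a two-step concatenation: first use the derived Atiyah--Segal completion theorem of \cite{C13} to replace the derived completion on the left-hand side by a mapping spectrum out of a Borel-style construction, then use Theorem \ref{main.thm.2}(ii) as a rigidity bridge to transfer the coefficients from $\oSpec \, \ok$ to $\oSpec \, \bar \oF$, and finally invoke Galois descent to identify the resulting mapping spectrum with $K(\oSpec \, \oF)\compl_{\rho_\ell}$.

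\textbf{Step 1 (completion $\Rightarrow$ Borel construction).} Apply the derived Atiyah--Segal completion theorem proved by the first author in \cite{C13}. Under the running hypothesis that $\rmG_{\oF}$ is a pro-$\ell$ group (and with the K-theory spectrum appropriately completed at $\ell$), that theorem yields a weak equivalence
\[
K(\oSpec \, \ok, \rmG_{\oF})\compl_{I_{\rmG,\ell}} \;\simeq\; \Map\bigl(\oSpec \, \ok \times_{\rmG_{\oF}} EG^C_{\oF},\, \mathbf{K}_\ell\bigr),
\]
where $\mathbf{K}_\ell$ denotes the K-theory spectrum smashed (derived) with a Moore / Eilenberg--MacLane coefficient of the type appearing in Theorem \ref{main.thm.2}(ii). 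This is the statement of \cite{C13} applied to $\chi = \oSpec \, \ok$ with trivial $\rmG_{\oF}$-action, and it is precisely the object of Theorem \ref{main.thm.2}(i) evaluated on the trivial pro-scheme (which also explains why the interpretation of the right-hand side is the one we want).

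\textbf{Step 2 (rigidity).} Apply Theorem \ref{main.thm.2}(ii) with the ring spectrum $E = \mathbf{K}$. Specialising $\chi = \oSpec \, \ok$ on the left and $\chi = \oSpec \, \bar \oF$ on the right, one obtains
\[
\Map\bigl(\oSpec \, \ok \times_{\rmG_{\oF}} EG^C_{\oF},\, \mathbf{K}_\ell\bigr) \;\simeq\; \Map\bigl(\oSpec \, \bar\oF \times_{\rmG_{\oF}} EG^C_{\oF},\, \mathbf{K}_\ell\bigr).
\]
This is the only place where the rigidity for Borel-style generalized equivariant cohomology actually enters; combined with Step 1 it already rewrites the left-hand side of Theorem \ref{Carl.conj} as a mapping spectrum out of the homotopy orbit of $\oSpec \, \bar\oF$ by the Galois action.

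\textbf{Step 3 (Galois descent to recover $K(\oSpec \, \oF)\compl_{\rho_\ell}$).} Since the action of $\rmG_{\oF}$ on $\oSpec \, \bar\oF$ is (profinitely) free, the Borel construction $\oSpec \, \bar\oF \times_{\rmG_{\oF}} EG^C_{\oF}$ is a model for $\oSpec \, \oF$ in the relevant homotopy category. Thus
\[
\Map\bigl(\oSpec \, \bar\oF \times_{\rmG_{\oF}} EG^C_{\oF},\, \mathbf{K}_\ell\bigr) \;\simeq\; \mathbf{K}_\ell(\oSpec \, \oF) \;\simeq\; K(\oSpec \, \oF)\compl_{\rho_\ell},
\]
the last equivalence being the standard identification of the derived completion along $\rho_\ell \colon \Sigma \to H(\Z/\ell)$ with $\ell$-adic completion of the K-theory spectrum. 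Concatenating the equivalences of Steps 1--3 produces Theorem \ref{Carl.conj}. The main obstacle is Step 3: one must justify that the Borel construction indeed models $\oSpec \, \oF$ well enough that the mapping spectrum into $\mathbf{K}_\ell$ computes $\ell$-completed K-theory of $\oSpec \, \oF$, and one must reconcile the three possible choices of $\mathbf{K}_\ell$ in Theorem \ref{main.thm.2}(ii) with the single derived completion $\compl_{\rho_\ell}$ — Steps 1 and 2 are essentially bookkeeping on top of \cite{C13} and Theorem \ref{main.thm.2}, whereas Step 3 is where the interaction between Galois descent and the two flavours of completion must be made precise.
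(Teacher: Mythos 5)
Your skeleton (Atiyah--Segal completion, then rigidity, then descent) matches the paper's strategy, but Step 3 contains a genuine gap, and it is exactly the step you yourself flag as ``the main obstacle.'' The assertion that $\oSpec \, \bar \oF \times_{\rmG_{\oF}} E\rmG^C_{\oF}$ ``is a model for $\oSpec \, \oF$ in the relevant homotopy category,'' so that the mapping spectrum into $\bK_{\ell}$ computes $K(\oSpec \, \oF)\compl_{\rho_{\ell}}$, is not something you can get for free from freeness of the Galois action on a pro-scheme; it is precisely the content that has to be proved, and the paper does not argue this way. Instead, the paper applies the derived completion theorem of \cite{C13} a \emph{second} time, on the $\bar \oF$ side, to rewrite $\Map(\oSpec \, \bar \oF \times_{\rmG_{\oF}} E\rmG^C_{\oF}, \bK_{\ell})$ as $(K(\oSpec \, \bar \oF, \rmG)_{\ell})\compl_{I_{\rmG}}$; it then uses Galois descent for \emph{equivariant} K-theory to identify $K(\oSpec \, \bar \oF, \rmG)$ with $K(\oSpec \, \oF)$ (a statement about equivariant sheaves over the Galois tower, not about the Borel construction modelling $\oSpec\, \oF$); and finally it proves separately (Proposition ~\ref{Sigma.vs.KG}) that the residual completion $(K(\oSpec \, \oF)_{\ell})\compl_{I_{\rmG}} \simeq K(\oSpec \, \oF)_{\ell}$ is a weak equivalence because the augmentation ideal acts trivially on the homotopy groups. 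Without some version of these last two inputs your chain of equivalences does not close.

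A second, smaller gap: in Step 1 you quote the completion theorem as producing the completion along the composite $I_{\rmG,\ell}$ directly, whereas what \cite{C13} gives is the $I_{\rmG}$-completion of each partial mod-$\ell$ spectrum, i.e.\ terms of the form $(K(\oSpec \, \ok, \rmG)\compl_{\rho_{\ell},m})\compl_{I_{\rmG},m}$. Identifying the homotopy inverse limit of these iterated partial completions with the single derived completion along $I_{\rmG,\ell}$ requires showing that the two triples (for $\rho_{\ell}$ and for $I_{\rmG}$) commute and that the double cosimplicial object can be replaced by its diagonal; this is the content of Proposition ~\ref{rhol.IG} and Corollary ~\ref{IGl} in the paper, and it is also needed on the $\oF$-side to reconcile the three flavours of $\bK_{\ell}$ with the single completion $\compl_{\rho_{\ell}}$. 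You correctly flag this reconciliation as an issue but do not resolve it.
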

\vskip .3cm

\section{Borel style equivariant K-theory and generalized equivariant cohomology}
The first step in defining Borel style generalized equivariant cohomology, is to define the 
 {\it geometric classifying space of a linear algebraic group}. 
Since different choices are possible for such geometric classifying spaces, we proceed to consider this in the
more general framework of {\it admissible gadgets} as defined in \cite[ section 4.2]{MV}. The following definition is a variation
of the above definition in \cite{MV} and appears in  \cite[Definition 2.2]{K}.
\subsection{Admissible gadgets associated to a given $\rmG$-scheme}
\label{subsubsection:adm.gadgets}
Let $\rmG$ denote a linear algebraic group over $\ok$.  We shall say that a pair $(W,U)$ of smooth schemes over $\ok$
is a {\sl good pair} for $\rmG$ if $W$ is a $\ok$-rational representation of $\rmG$ and
$U \subsetneq W$ is a $\rmG$-invariant open subset which is a smooth scheme with a free
action by $\rmG$. It is known ({\sl cf.} \cite[Remark~1.4]{Tot}) that a 
good pair for $\rmG$ always exists.
\begin{definition}\label{defn:Adm-Gad}
A sequence of pairs $ \{ (W_m, U_m)|m \ge 1\}$ of smooth schemes
over $\ok$ is called an {\sl admissible gadget} for $\rmG$, if there exists a
good pair $(W,U)$ for $\rmG$ such that $W_m = W^{\oplus m}$ and $U_m \subsetneq W_m$
is a $\rmG$-invariant open subset such that the following hold for each $m \ge 1$.
\begin{enumerate}
\item
$\left(U_m \oplus W\right) \cup \left(W \oplus U_m\right)
\subseteq U_{m+1}$ as $\rmG$-invariant open subsets.
\item
$\codim_{U_{m+2}}\left(U_{m+2} \setminus 
\left(U_{m+1} \oplus W\right)\right) > 
\codim_{U_{m+1}}\left(U_{m+1} \setminus \left(U_{m} \oplus W\right)\right)$.
\item
$\codim_{W_{m+1}}\left(W_{m+1} \setminus U_{m+1}\right)
> \codim_{W_m}\left(W_m \setminus U_m\right)$.
%\enlargethispage*{1000pt}
\item
%\enlargethispage*{1000pt}
%\pagebreak
$U_m$ is a smooth scheme over $\ok$ with a free $\rmG$-action.
%\enlargethispage*{950pt}
\end{enumerate}
\end{definition}
An {\it example} of an admissible gadget for $\rmG$ can be constructed as follows. 
The first step in constructing an admissible gadget is to start  with a good pair $(W, U)$ for $\rmG$. 
The choice of such a good pair will vary depending on $\rmG$. 
\vskip .3cm
The following choice of a good pair is often convenient, but see Proposition ~\ref{good.pair.l.group} for a different choice when $\rmG$ is a finite $l$-group where $l$ is different from the
characteristic of $\ok$. Choose a faithful $\ok$-rational representation $R$ of $\rmG$ of dimension $n$. i.e. $\rmG$ admits a closed
immersion into $\GL(R)$. Then 
$\rmG$ acts freely on an open subset $U$ of $W= R^{\oplus n} = End(W)$. (i.e. Take $U=\GL(R)$.)
Let $Z = W \setminus U$.
\vskip .3cm
Given a good pair $(W, U)$, we now let
\be \begin{equation}
\label{adm.gadget.1}
 W_m = W^{\oplus m}, U_1 = U \mbox{ and } U_{m+1} = \left(U_m \oplus W\right) \cup
\left(W \oplus U_m\right) \mbox{ for }m \ge 1.
\end{equation} \ee
Setting 
$Z_1 = Z$ and $Z_{m+1} = U_{m+1} \setminus \left(U_m \oplus W\right)$ for 
$m \ge 1$, one checks that $W_m \setminus U_m = Z^m$ and
$Z_{m+1} = Z^m \oplus U$.
In particular, $\codim_{W_m}\left(W_m \setminus U_m\right) =
m (\codim_W(Z))$ and
$\codim_{U_{m+1}}\left(Z_{m+1}\right) = (m+1)d - m(\dim(Z))- d =  m (\codim_W(Z))$,
where $d = \dim(W)$. Moreover, $U_m \to {U_m}/\rmG$ is a principal $\rmG$-bundle and that the quotient $V_m=U_m/\rmG$ exists 
at least as a smooth algebraic space since the $\rmG$-action on $U_m$ is free. We will 
often use $E\rmG^{gm,m}$ to denote the $m$-th term of an admissible gadget $\{U_m|m \}$.
\vskip .3cm
The following proposition provides an alternative construction of a good pair when the group $\rmG$ is a finite $l$-group,
with $l$ a prime different from the characteristic of $\ok$.
\begin{proposition}
 \label{good.pair.l.group}
Let $\rmG$ be a finite $l$-group with $l$ a prime different from the characteristic of $\ok$. Then there exist $\ok$-rational representations
$\rho_i$, $i=1, \cdots, s$ so that the action of $\rmG$ on $\Pi_{i=1}^s S(\rho_i)$ is free where $S(\rho_i)$ denotes the complement of
the origin in the representation $\rho_i$. 
\end{proposition}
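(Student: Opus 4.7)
The plan is to reduce the proposition to constructing, for each nontrivial $g \in G$, a single $k$-rational representation $\rho_g$ of $G$ with $(\rho_g)^g = 0$ (no nonzero $g$-fixed vector). Given such a family indexed by the nonidentity elements of $G$, taking $\{\rho_g\}_{g \neq 1}$ forces the diagonal action on $\prod_{g \neq 1} S(\rho_g)$ to be free: a hypothetical stabilizer of a point $(v_g)_{g \neq 1}$ containing some $h \neq 1$ would require $v_h$ to be a nonzero $h$-fixed vector in $\rho_h$, contradicting $(\rho_h)^h = 0$.

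To construct $\rho_g$, I will use that $G$ is nilpotent. Form the upper central series $1 = Z_0 \triangleleft Z_1 \triangleleft \cdots \triangleleft Z_n = G$ and pick the minimal index $i$ with $g \in Z_i$; set $N = Z_{i-1}$. Then $N \triangleleft G$, $g \notin N$, and the image $\bar g$ of $g$ in $H = G/N$ is a nontrivial element of $Z(H)$. Over $\bar k$, Schur's lemma forces $\bar g$ to act by a scalar $\lambda_V \in \bar k^{\times}$ on each irreducible summand $V$ of $\bar k[H]$; since $\bar g \neq 1$ acts non-identically on the faithful module $\bar k[H]$, at least one irreducible summand $V_0$ satisfies $\lambda_{V_0} \neq 1$, and hence $V_0^{\bar g} = 0$.

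The main obstacle is to descend $V_0$ from $\bar k$ to $k$. I will fix a finite Galois extension $L/k$ over which $V_0$ is defined as an $L[H]$-module, and take $\rho_g$ to be the restriction of scalars of $V_0$ from $L$ to $k$. Then $\rho_g \otimes_k \bar k \cong \bigoplus_{\sigma \in \mathrm{Gal}(L/k)} V_0^{\sigma}$, and on each Galois-conjugate summand the central element $\bar g$ acts as the scalar $\sigma(\lambda_{V_0})$, which remains a nontrivial $l$-power root of unity because Galois automorphisms preserve multiplicative order. Consequently $(\rho_g \otimes_k \bar k)^g = 0$, so $(\rho_g)^g = 0$; pulling back along $G \twoheadrightarrow H$ yields the required $k$-rational representation of $G$, completing the family. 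The hypothesis $l \neq \mathrm{char}\,k$ enters only through Maschke's theorem (to isolate irreducible summands of $\bar k[H]$) and through the fact that $\lambda_{V_0}$, being a character value of a finite $l$-group representation, lies in a separable cyclotomic extension of $k$ so that restriction of scalars behaves as expected.
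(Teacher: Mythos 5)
Your proof is correct, but it takes a genuinely different route from the paper's. The paper argues by induction on $\log_l(\#G)$: it chooses a quotient $Q$ of index $l$, whose kernel $K\cong \mathbb{Z}/l$ is automatically central, applies the inductive hypothesis to $Q$ to obtain representations on whose product of punctured spaces $Q$ acts freely, and adjoins the single induced representation $\tau=i^{G}_{K}(\sigma)$; since $K$ is central, $\tau|_K$ is a direct sum of copies of $\sigma$, so $K$ acts freely on $S(\tau)$, and the two-step argument (first push to $Q$, then land in $K$) gives freeness. This produces one representation per step of the induction. You instead build one representation for every nontrivial $g\in G$, using the upper central series to pass to a quotient in which $g$ becomes central, and then Schur's lemma applied to the regular representation to find an irreducible on which $g$ acts by a nontrivial $l$-power root of unity. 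Both arguments ultimately rest on the same inputs: nilpotency of $l$-groups and semisimplicity coming from $l\neq\mathrm{char}(k)$. What your route buys is an explicit treatment of $k$-rationality via restriction of scalars; the paper's proof works silently over an algebraically closed field (it even invokes a ``complex representation'' of $K$), which suffices for its applications since $\ok$ is algebraically closed where the proposition is used, but your version covers more general $k$. The one step you should make fully explicit is why $V_0$ is defined over a finite \emph{Galois} extension $L/k$, which your last sentence only gestures at: every irreducible representation of a finite $l$-group over $\bar k$ is monomial, hence realizable over the cyclotomic extension $k(\zeta_{l^m})$ with $l^m=\#G$, and this extension is separable and Galois over $k$ precisely because $l\neq\mathrm{char}(k)$; with $L=k(\zeta_{l^m})$ your base-change formula $\rho_g\otimes_k\bar k\cong\bigoplus_{\sigma}V_0^{\sigma}$ and the conclusion $(\rho_g)^g=0$ go through as you state.
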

\begin{proof}We proceed by induction on $ n = log_l(\# (\rmG))$. For $n = 1$, this result is clear since
any cyclic $l$-group acts freely on a single $S(\rho)$. Suppose the result is known for $n < i$, and we are
given an $l$-group of order $l^i$. The group $\rmG$ is solvable, and therefore has a quotient ${\rm Q}$ of order $l^{i-1}$.
Therefore, there exist representations $\rho_1, \cdots,  \rho_t$ of ${\rm Q}$ (and therefore of $\rmG$) so that the action of ${\rm Q}$
on 
\[\Pi_{j=1}^t S(\rho_j)\]
is free. The kernel ${\rm K}$ of the projection $\rmG \ra {\rm Q}$ is a cyclic group of order $l$, and is in the center of
$\rmG$. Let $\sigma$ denote a complex representation of ${\rm K}$ so that the ${\rm K}$-action is free on $S(\sigma)$. Construct
the induced representation $\tau = i^{\rmG}_{\rm K}(\sigma)$ of $\rmG$. It is clear from the usual induction/restriction formulae that
the restriction of $\tau$ to ${\rm K}$  also has the property that the ${\rm K}$ action on $S(\tau)$ is free. It now follows
directly that the action of $\rmG$ on
\[S(\tau ) \times \Pi_{j=1}^t S(\rho_j)\]
is free. \end{proof}
\begin{proposition}
 \label{new.adm.gadgets}
(i) If $\{(W_m, U_m)|m\}$ is an admissible gadget for a linear algebraic group $\rmG$, it is also an admissible gadget for
any closed subgroup $\rmH$ of $\rmG$.
\vskip .3cm
(ii) For $i=1,2$, let $\rmG (i)$ denote linear algebraic groups and let $(W(i), U(i))$ denote a good pair for $\rmG (i)$.
For $i=1,2$, let $\{(W(i)_m, U(i)_m)|m \ge 1\}$ denote the corresponding admissible gadgets. Then $(W(1) \times W(2), U(1) \times U(2))$ is
a good pair for $\rmG (1) \times \rmG (2)$. Under the identification 
\be \begin{equation}
     \label{ident.1}
(W(1) \times W(2))^m = W(1)^m \times W(2)^m, 
\end{equation} \ee
\vskip .3cm \noindent
$\{(W(1)_m \times W(2)_m, U(1)_m \times U(2)_m)|m \ge 1\}$ is an admissible gadget for $\rmG (1) \times \rmG (2)$.
\vskip .3cm
(iii) Assume $\rmG(i)= \rmG$, $i=1,2$. Then $(W(1) \times W(2), U(1) \times W(2) \cup W(1) \times U(2))$ is a good pair for the diagonal action
of $\rmG$. Moreover if $U_m = U(1)_m \times W(2)^m \cup W(1)^m \times U(2)_m$, then $(W_m= W(1)^m \times W(2)^m, U_m)$ is an
admissible gadget for the diagonal action of $\rmG$.

\end{proposition}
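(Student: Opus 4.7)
The proposal is to verify each of the four clauses of Definition~\ref{defn:Adm-Gad} directly in each case. The key observation is that conditions (1), (2), (3) are statements about the underlying schemes and codimensions, so the only place where the group action enters is clause (4) (freeness and smoothness), and even there smoothness is group-independent.

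For part (i), nothing needs to be redone on the scheme side: the same open subsets $U_m \subset W_m$ witness the admissibility, and one only checks that the $\rmH$-action on $U_m$ is free. This is immediate since $\rmH \subseteq \rmG$ and the restriction of a free action is free. Freeness on each $U_m$, together with the unchanged codimension and inclusion axioms for $\{(W_m,U_m)\}$ as a system, finishes the argument.

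For part (ii), take the underlying good pair to be $(W(1) \oplus W(2), U(1) \times U(2))$ for $\rmG(1) \times \rmG(2)$; openness and invariance are obvious, and freeness of the product action on $U(1) \times U(2)$ follows from freeness of each factor. Under the identification (\ref{ident.1}), the $m$-th terms $W_m = W(1)^m \times W(2)^m$ arise as $(W(1) \oplus W(2))^{\oplus m}$ and $U_m := U(1)_m \times U(2)_m$ is a $\rmG(1) \times \rmG(2)$-invariant open subset. I will then verify the four axioms: (4) smoothness/freeness is a product of free smooth actions; (1) follows from a direct computation showing $U_m \oplus W$ corresponds under the identification to $(U(1)_m \oplus W(1)) \times (U(2)_m \oplus W(2))$, which by the admissibility of each factor lies in $U(1)_{m+1} \times U(2)_{m+1} = U_{m+1}$; (3) uses that $W_{m+1} \setminus U_{m+1}$ is the union $(W(1)^{m+1} \setminus U(1)_{m+1}) \times W(2)^{m+1} \, \cup \, W(1)^{m+1} \times (W(2)^{m+1} \setminus U(2)_{m+1})$, whose codimension is the minimum of the codimensions in each factor, each of which strictly increases with $m$ by hypothesis; (2) is analogous.

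For part (iii), the good pair $(W(1) \oplus W(2), \, U(1)\times W(2) \, \cup \, W(1) \times U(2))$ is handled the same way: openness, $\rmG$-invariance (for the diagonal action), and smoothness are clear, while freeness on each piece $U(i) \times W(3-i)$ follows because a diagonal action whose restriction to one factor is free is free. The main new ingredient is the calculation of $W_{m+1} \setminus U_{m+1}$: since $U_{m+1} = U(1)_{m+1} \times W(2)^{m+1} \, \cup \, W(1)^{m+1} \times U(2)_{m+1}$, De~Morgan gives
\[
W_{m+1} \setminus U_{m+1} \;=\; (W(1)^{m+1} \setminus U(1)_{m+1}) \times (W(2)^{m+1} \setminus U(2)_{m+1}),
\]
whose codimension is the \emph{sum} of the codimensions in each factor, hence strictly increasing in $m$. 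Axiom (1) is again a direct identification: $U_m \oplus W$ equals $(U(1)_m \oplus W(1)) \times W(2)^{m+1} \, \cup \, W(1)^{m+1} \times (U(2)_m \oplus W(2))$, which is contained in $U_{m+1}$ by the admissibility of each $\{(W(i)_m,U(i)_m)\}$; axiom (2) is a similar calculation, and axiom (4) follows since the diagonal $\rmG$-action is free on each of the two open pieces comprising $U_m$. I expect the main obstacle to be bookkeeping: carefully tracking how the coordinate-permutation identification $(W(1) \oplus W(2))^{\oplus m} \cong W(1)^m \times W(2)^m$ transports the operations $\oplus$, $\cup$ and $\setminus$ into the factored form. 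Once these identifications are made explicit, each of the verifications reduces to the admissibility axioms for the factors.
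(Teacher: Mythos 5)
Your proposal is correct and follows essentially the same route as the paper's proof: a direct verification of the four clauses of Definition~\ref{defn:Adm-Gad}, using the same regrouping identification $(W(1)\oplus W(2))^{\oplus m}\cong W(1)^m\times W(2)^m$ and the same minimum-of-codimensions computations for (ii), with (i) dismissed as immediate. If anything, your De~Morgan observation in (iii) that $W_{m+1}\setminus U_{m+1}$ is the \emph{product} of the two complements (so its codimension is the sum, not the minimum) is spelled out more explicitly than in the paper, which handles that clause by reference back to the argument for (ii).
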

\begin{proof} The first assertion, being obvious, we will only consider the second assertion. Observe that, 
under the identification in 
 ~\ref{ident.1}, 
$(U(1)_m \times U(2)_m) \times W(1) \times W(2) \bigcup W(1) \times W(2) \times (U(1)_m \times U(2)_m)$ is identified with
$U(1)_m \times W(1) \times U(2)_m \times W(2) \bigcup W(1) \times U(1)_m \times W(2) \times U(2)_m$. Since
\[ U(1)_{m+1} \times U(2)_{m+1} \supseteq ( U(1)_m \times W(1) \cup W(1) \times U(1)_m) \times (U(2)_m \times W(2) \cup W(2) \times U(2)_m)\]
it is clear that 
\[U(1)_{m+1} \times U(2)_{m+1} \supseteq (U(1)_m \times U(2)_m) \times W(1) \times W(2) \bigcup W(1) \times W(2) \times (U(1)_m \times U(2)_m) \]
thereby proving that the first hypothesis in Definition ~\ref{defn:Adm-Gad} is satisfied. 
\vskip .3cm
Now observe that
\be \begin{multline}
     \begin{split}
U(1)_{m+1} \times U(2)_{m+1} - U_m(1) \times W(1) \times U(2)_m \times W(2) = (U(1)_{m+1} - U(1)_m \times W(1)) \times U(2)_{m+1}\\ 
\bigcup
 U(1)_{m+1} \times (U(2)_{m+1} - U(2)_m \times W(2))
\end{split}
\end{multline} \ee
Therefore, 
\be \begin{multline}
     \begin{split}
codim_{U(1)_{m+1} \times U(2)_{m+1}}(U(1)_{m+1} \times U(2)_{m+1} - U_m(1) \times W(1) \times U(2)_m \times W(2)) 
\\=
 min (codim_{U(1)_{m+1}}(U(1)_{m+1}-U(1)_m \times W(1)), codim_{U(2)_{m+1}}(U(2)_{m+1}-U(2)_m \times W(2)))
\end{split}
\end{multline} \ee
Similarly, 
\be \begin{multline}
     \begin{split}
codim_{U(1)_{m+2} \times U(2)_{m+2}}(U(1)_{m+2} \times U(2)_{m+2} - U_{m+1}(1) \times W(1) \times U(2)_{m+1} \times W(2)) 
\\= min (codim_{U(1)_{m+2}}(U(1)_{m+2}-U(1)_{m+1} \times W(1)), codim_{U(2)_{m+2}}(U(2)_{m+2}-U(2)_m \times W(2)))
\end{split}
\end{multline} \ee
Therefore, one may observe that the second hypothesis in Definition ~\ref{defn:Adm-Gad} is satisfied. One verifies similarly that the
third hypothesis in Definition ~\ref{defn:Adm-Gad} is satisfied. Since $\rmG (i)$ acts freely on $U(i)_m$ for each $i=1, 2$, the
last hypothesis in Definition ~\ref{defn:Adm-Gad} is also satisfied thereby completing the proof of the proposition.
\vskip .3cm
Next we will consider the third statement briefly. Now one readily observes that
\[U_m \oplus W(1) \oplus W(2) = (U_m(1) \oplus W) \times W(2)^{m+1} \cup W(1)^{m+1} \times (U_m(2) \oplus W(2)) \mbox{ and }\]
\[W(1) \oplus W(2)  \oplus U_m = (W \oplus U_m(1)) \times W(2)^{m+1} \cup W(1)^{m+1} \times (W(2) \oplus U_m(2)) \]
Since both $U_{m+1}(1)$ and $U_{m+2}(2)$ have the first property in Definition ~\ref{defn:Adm-Gad}, it follows that 
this is contained in $U_{m+1}(1) \times W(2)^{m+1} \cup W(1)^{m+1} \times U_{m+1}(2) = U_{m+1}$. This verifies the first
property in Definition ~\ref{defn:Adm-Gad} for $\{U_m|m\}$. To verify the second property, one observes that
\be \begin{align}
U_m \oplus W(1) \oplus W(2) &= (U_m(1) \oplus W(2)^m \cup W(1)^m \oplus U_m(2)) \oplus W(1) \oplus W(2) \\
&=(U_m(1) \oplus W(1)) \times W(2)^{m+1} \cup W(1)^{m+1} \times (U_m(2) \oplus W(2)) \notag \end{align} \ee
\vskip .3cm \noindent
so that 
\[U_{m+1} -(U_m \oplus W(1) \oplus W(2)) = (U_{m+1}(1) -(U_m(1) \oplus W(1))) \times W(2)^{m+1} \cup W(1)^{m+1} \times (U_{m+1}(2) - (U_m(2) \oplus W(2)))\]
Now one may readily compute
\be \begin{align}
     codim_{U_{m+1}}(U_{m+1}  - (U_m \oplus W(1)  \oplus W(2))) \notag \\
 = min (codim_{U(1)_{m+1}}(U(1)_{m+1}-(U(1)_m \times W(1))), codim_{U(2)_{m+1}}(U(2)_{m+1}-(U(2)_m \times W(2)))) \notag
\end{align} \ee
Now one may argue as in the proof of (ii) to complete the proof.
\end{proof}

\subsection{Borel-style equivariant K-theory and generalized equivariant cohomology for actions of algebraic groups}
\label{subsection:geom.class.space}
 Let $\rmG$ denote a linear algebraic group over $\rmS =\oSpec \quad k$. 
Let $\{(W_m, U_m)|m \ge 1\}$ denote an admissible gadget for $\rmG$. 
 For $m \ge   1$, we let  
\[E\rmG^{gm,m}=U_m, B\rmG^{gm,m}=V_m=U_m/G.\] 
We let 
\[E\rmG^{gm} = \colimm U_m \mbox{ and } B\rmG^{gm} = \colimm V_m\]
 where the colimit is taken
over the  closed embeddings 
$U_m \ra U_{m+1}$ and $V_m \ra V_{m+1}$ corresponding to the embeddings 
$Id \times  \{0\} : U_m \ra U_m \times W \subseteq U_{m+1}$. These are viewed as sheaves on 
$({\rm {Sm/k}})_{Nis}$ or on $({\rm {Sm/k}})_{et}$. 
\vskip .3cm
Given a smooth scheme $X$ of finite type over $S$ with a $\rmG$-action, we let $U_m{\underset {\rmG} \times} X$ denote the obvious 
{\it balanced product}, 
where $(u, x)$ and $(ug^{-1}, gx)$ are identifed for all $(u, x) \eps U_m \times X$ and $g \eps \rmG$.  Since the $\rmG$-action on 
$U_m$ is free, a 
geometric quotient again exists at least as an algebraic space in this setting.
\begin{definition} 
\label{equiv.gen.coh}
(Borel style equivariant K-theory and generalized cohomology)
 Assume first that $\rmG$ is an algebraic group or a finite group viewed as an algebraic group by imbedding it in some $\GL_n$. 
We define the Borel style equivariant K-theory of $X$ to be $K(E\rmG^{gm}{\underset {\rmG} \times}X)$. More generally, given a spectrum $E$, 
we define the corresponding Borel style generalized equivariant cohomology of $X$ to be 
\[\H_{\rmG}(X, E)=\H(E\rmG^{gm}{\underset {\rmG }\times}X, E)=
\holimm \H(U_m{\underset {\rmG} \times}X, E) = \holimm \Map (\Sigma _{{\rmS}^1}E{\rmG}^{gm,m}{\underset {\rmG} \times}X, E)\]
\vskip .3cm \noindent
\[= \Map (\colimm \Sigma _{{\rmS}^1}E{\rmG}^{gm,m}{\underset {\rmG} \times}X, E) =\Map (\Sigma_{{\rmS}^1}E\rmG^{gm}{\underset {\rmG} \times}X, E).\] 
\end{definition}
\vskip .3cm \noindent
Here the hypercohomology denotes hypercohomology computed on the
Nisnevich site of the scheme $U^m{\underset {\rmG} \times}X$. $\Map$ denotes the simplicial mapping functor computed again on the
same site.  It is often convenient to identify this with
\be\begin{equation} 
\begin{split}
\H_{\GL_n}(\GL_n{\underset {\rmG} \times}X, E)=\H(E\GL_n^{gm}{\underset {\GL_n} \times}GL_n {\underset {\rmG} \times}X, E)=
\holimm \H(E\GL_n^{gm,m}{\underset {\GL_n} \times}\GL_n {\underset {\rmG} \times}X, E)\\
=\holimm \Map (\Sigma _{{\rmS}^1}E\GL_n^{gm,m}{\underset {\GL_n} \times}GL_n{\underset {\rmG} \times}X, E)
\end{split}
\end{equation} \ee
\vskip .3cm 
\subsection{Essential uniqueness of the geometric Borel construction}
Here we follow the discussion in \cite{MV} and \cite{K}.

\vskip .3cm

The following are shown in \cite[section 2]{K}:
\begin{itemize}
 \item Let $\{E\rmG^{gm,m}|m \}$ denote the ind-scheme obtained from an admissible gadget associated to the algebraic group $\rmG$.
Then if $X$ is any scheme or algebraic space over $\ok$, then $\colimm E\rmG^{gm,m}{\underset {\rmG} \times }X \cong
(\colimm E\rmG^{gm,m}){\underset {\rmG}\times}X = E\rmG^{gm}{\underset {\rmG}\times}X$. (This follows readily from the
observation that the $\rmG$ action on  $E\rmG^{gm,m}$ is free and that filtered colimits commute with the
balanced product construction above.)
\item If $\{{\widetilde {E\rmG}}^{gm,m}|m \}$ denotes  the ind-scheme obtained from another choice of 
an admissible gadget for the
algebraic group $\rmG$, one obtains an isomorphism $E\rmG^{gm}{\underset {\rmG}\times }X \simeq {\widetilde {E\rmG}}^{gm}
{\underset {\rmG}\times}X$ in the motivic homotopy category. 
\vskip .3cm
It follows therefore, that if $E$ is any ${\mathbb A}^1$-local spectrum, then one obtains a weak-equivalence:
\[\Map(E\rmG^{gm}{\underset {\rmG}\times}X, E) = \holimm \{\Map(E\rmG^{gm,m}{\underset {\rmG}\times}X, E)|m\} \simeq
 \holimm \{\Map(\widetilde {E\rmG}^{gm,m}{\underset {\rmG}\times}X, E)|m\}\]
\[ = \Map ({\widetilde {E\rmG}}^{gm}{\underset {\rmG}\times}X, E). \]
\end{itemize}
\vskip .3cm
The following is also proven in \cite[Lemma 2.9, p.1 39]{MV}:
if $Y$ is a smooth scheme of finite type over $\ok$, with a {\it free ${\rmG}$}-action and $X$ is any smooth scheme 
with a $\rmG$-action, then the obvious map $E\rmG^{gm}{\underset {\rmG} \times }(X \times Y) \ra X\times _{\rmG} Y$ is an
${\mathbb A}^1$-equivalence. (Here $\rmG$ is assumed to act diagonally on $X \times Y$ so that this action is also free.)

 \subsection{Borel style equivariant K-theory and generalized equivariant cohomology for pro-group actions}
In this section we will extend the constructions of the last section to actions of pro-groups on pro-schemes.
\subsubsection{Construction of the inverse system }
%\subsection{Construction of the inverse system  ${ \{E\rmG^{gm}_i{\underset {G_i} \times}X|i \} }$}
 \label{compat.inverse.systems}
Let $\A$ denote a directed set and let  $\{\phi({\alpha}):\rmG_a \ra \rmG_{b}|\alpha:b \ra a \in \A\}$ denote an inverse system of 
algebraic groups indexed by $\A^{op}$. We will denote the order on $\A$ by $\le$ and assume that if
$a \le b$ and $b \le a$, then $a=b$. We will also assume that given any $a \eps \A$, the full subcategory $\A/a$ has only a finite number of objects
and morphisms, i.e. the set $\A^{op}$ viewed as a cofiltered category in the obvious way is {\it cofinite}.
For the most part we will
be interested in the case where this forms the inverse system of finite quotient groups of a profinite group $\rmG$. As shown in 
\cite[Corollary 1.1.18]{RZ}, we may assume that each of the structure maps $\phi(\alpha):\rmG_a \ra \rmG_b$ is surjective, so that
there are only finitely many non-trivial quotient groups for any $\rmG_a$. Therefore, there is no loss of generality in assuming the
 hypothesis that each of the
subcategories $\A/a$ has only a finite number of objects and morphisms, i.e. the underlying category
associated to $\A^{op}$ is cofiltered and cofinite.

\vskip .3cm
Next suppose $\X=\{X_a | a \eps \A \}$ is an inverse system of schemes, so that each $X_a$ is provided with an action by $\rmG_a$ and
the these actions are compatible. Suppose further that structure maps of the inverse system $\{X_a|a \eps \A\}$ are flat.
Then for each map $\alpha: b \ra a$ in $\A$, one obtains the following induced maps:
\be \begin{equation}
\xymatrix{{G(X_b, \rmG_b)} \ar@<1ex>[r]  &  {G(X_b, \rmG_a) } \ar@<1ex>[r]  & {G(X_a, \rmG_a)} }
\end{equation} \ee
\vskip .3cm \noindent
where the map
\[G(X_b, \rmG_b) \ra G(X_b, \rmG_a)\]
  is induced 
 by restriction of the group action and the map
\[ G(X_b, \rmG_a) \ra G(X_a, \rmG_a) \]
is induced by pull-back 
along the flat maps $X_a \ra X_b$. One may readily see that this provides a direct system $\{G(X_a, \rmG_a)|a \eps \A\}$.
A corresponding result holds for $K$-theory in the place of $\rmG$-theory: since K-theory is always 
contravariantly functorial, one need not assume that the structure maps of the inverse system $\{X_a|a \eps \A \}$ are flat. 
\vskip .3cm
In view of the above discussion, we will assume that for each $a \eps \A$, one is provided with a linear
algebraic group $\frG_a$ (not necessarily part of any inverse system of groups indexed by $\A$) and that the inverse system of linear  algebraic groups $\{\bG_a| a \eps \A\}$ is defined
with 
\be \begin{equation}
     \label{bfG.def}
\bG_a = \Pi_{b \le a}\frG_b
    \end{equation} \ee
\vskip .3cm \noindent
with the structure maps of the inverse system $\{\bG_a| a \eps \A\}$ defined by the obvious projection
$\bG_a =\Pi_{b \le a} \frG_b \ra \bG_x = \Pi_{y \le x}\frG_y$, for $x \le a$.
\vskip .3cm
Next assume that one is given an inverse system of algebraic groups $\{\rmG_a|a \eps \A \}$, (which for the most part will be 
finite groups, viewed
as algebraic groups) $\{\rmG _a| a \eps \A\}$. Let $\phi_{a,b}: \rmG_a \ra \rmG_b$ denote the structure map of the inverse system
$\{\rmG_a| a \eps \A\}$. Then letting $\frG _a = \rmG _a$ for each $a \eps \A$ shows that one obtains 
a new inverse system of groups $\{\bG_a| a \eps \A\}$. Moreover mapping $G_a \ra \bG_a$ by sending $G_a$ by the identity map to $G_a$
and by the structure map $\phi_{a, b}$ to $\rmG_b$, for $b \le a$, provides  a strict map $\{\rmG _a| a \eps \A\} \ra \{\bG_a| a \eps \A\}$
of inverse systems which in each degree is a closed immersion and a group homomorphism.
\vskip .3cm
Next we consider non-negative integral valued functions (or sequences) $s$ defined on $\A$.
 Given two sequences
$s$ and $t$, we let $s \le t$ if $s(a) \le t(a)$ for all $ a \eps \A$. Clearly given two such sequences
$s$ and $t$, one has a third sequences $u$ dominating both $s$ and $t$ so that the collection of
such sequences is a directed set. We say a sequence $s$ is {\it non-decreasing} if $s(b) \le s(a)$ for
all $b \le a$ in $\A$. Since each object $a \eps \A$ has only finitely many objects $b \le a$, we 
see that the subset of non-decreasing sequences is cofinal in the directed set of all sequences. We will denote this 
directed set of non-decreasing sequences on $\A$ by $\K$.
\vskip.3cm
\begin{definition}
 \label{class.spaces}
In this situation, we assume that for each $ a\eps \A$ one has chosen a good pair for the group $\frG_a$ and that 
$\{E\frG_a^{gm,m}|m \}$ is  the corresponding admissible gadget, fixed
throughout the following discussion. Associated to the sequence $s$, we now let
\be \begin{equation}
     U_a^s =E\bG_a^{gm,s} =\Pi_{b \le a} E\frG^{gm, s(b)}_b.
    \end{equation} \ee
\vskip .3cm \noindent
We also let
\[E\bG^{gm,s} = \{E\bG^{gm,s}_a|a\}.\]
\end{definition}
One observes that for each fixed sequence $s$ as above, $\{U_a^s|a \eps \A\}$ defines an inverse system of
schemes (with the structure map $U_a^s \ra U_b^s$ induced by the obvious projection for each $b \le a$)
and so that each $U_a^s$ is provided with an action by $\bG_a$ which are compatible as $a$ varies. 
\begin{remark}
 For many situations, it suffices to consider non-negative integer valued sequences $s$ defined on $\A$ that are
{\it constant}. In this case we will denote such constant sequences by their common integral value. However, this will
{\it not suffice} in general. Anticipating this, we are setting up 
a framework here that will apply to more general situations readily. 
\end{remark}
\begin{definition} (Borel style generalized equivariant cohomology for pro-objects.)
\label{def.Borel.coh.pro}
Assume in addition to the above situation that $\bX=\{\bX_a| a \eps \A\}$ is an inverse system of schemes
with compatible actions by $\{\bG_a|a \eps \}$.  Then we let
\be \begin{align}
     \label{equiv.ge.coho.pro.grps.2}
H(E\bG^{gm, s} \times_{\bG} \bX, E) &=\colima \{\H(E\bG_a^{gm, s} \times_{\bG_a} \bX_a, E)|a \eps \A\} \\
&= \colima \{\H(\Pi_{b \le a}E{\mathfrak G}_b^{gm, s(b)} \times_{\Pi_{b\le a}{\mathfrak G}_b} \bX_a, E)|a \eps \A\} \mbox{ } \notag\\
\H_{\bG}(\bX, E) &= \holims \H(E\bG^{gm,s} \times_{\bG} \bX, E) \notag \end{align} \ee
\vskip .3cm
In particular, when the spectrum $E$ denotes $K$-theory, one obtains the weak-equivalences (since the $\bG_a$-action on $E\bG_a^{gm, \bs(a)}$  is free):
\be \begin{align}
K(E\bG^{gm, s} \times \bX, \bG) &= \colima \{K(E\bG_a^{gm, s(a)} \times \bX_a, \bG_a)|a \eps \A\} \simeq \colima \{K(E\bG_a^{gm, s(a)} \times_{\bG_a} \bX_a)|a \eps \A\} \mbox{ and }\notag \\
K(E\bG^{gm}\times \bX, \bG) &\simeq \holims \colima\{K(E\bG_a^{gm, s(a)} \times_{\bG_a} \bX_a)|a \eps \A\} \notag \end{align} \ee
\vskip .3cm \noindent
A corresponding result holds for G-theory, when the structure maps of the inverse system $\{X_a|a \eps \A\}$ are flat. 
\vskip .3cm
Next let $\rmG=\{\rmG_a| a \eps \A\}$ denote an inverse system of finite groups, acting on the inverse system of schemes ${\mathcal X}=\{X_a|a \eps \A\}$, 
with $\rmG_a$ acting on $X_a$.
Then we let ${\mathfrak G}_a $ denote a general linear group into which $\rmG_a$ admits a closed immersion. We then form the inverse system
$\{\bG_a= \Pi_{\b \le a} {\mathfrak G}_b|a \eps \A\}$ where the structure maps are the obvious projections. 
Now each $\rmG_a$ admits a closed immersion into $\bG_a$ by mapping into each ${\mathfrak G}_b$ through the homomorphism into $\rmG_b$.
We then let $\bX_a= \bG_a {\underset {\rmG_a} \times}X_a$ and observe the weak-equivalence
\be \begin{equation}
\label{equiv.ge.coho.profinite.grps}
     \H_{\bG}(\bX , E) =\holims \colima \H(E\bG_a^{gm, s(a)} \times_{\bG_a} \bG_a{\underset {\rmG_a} \times}X_a, E) 
\simeq \holims \colima \H(E\bG_a^{gm, s(a)}\times _{\rmG_a}X_a, E)
    \end{equation} \ee
Therefore, we denote $\holims \colima \H(E\bG_a^{gm, s(a)}\times _{\rmG_a}X_a, E)$ by $\H_{G}({\mathcal X}, E)$.
\end{definition}
\subsection{Essential uniqueness of the geometric classifying spaces for pro-group actions}
In this section, we proceed to prove that the Borel style generalized equivariant cohomology considered above is independent of
the choice of the geometric classifying spaces for the groups involved. The proof of this statement follows  in outline the proof
of the corresponding statements for actions of inverse systems of topological groups on inverse systems of topological spaces. 
Recall 
this makes use of Postnikov towers for the corresponding spectrum: see \cite[Proposition 11.17]{Fausk}. Our proof, therefore, will make use of the corresponding motivic
Postnikov towers as defined in \cite{Voev2} and \cite{Lev}. Next, we digress to recall the relevant properties
of the motivic Postnikov towers.
\vskip .3cm
Recall that $\Spt_{S^1}(k)$ denotes the category of {\it motivic $S^1$ spectra}.
We will begin with the following rather technical (but useful) result.
\begin{lemma}
 \label{smah.preservers.good.props}
Assume $E$ is a ring spectrum in $\Spt_{\rmS^1}(k)$ satisfying the properties in ~\ref{spectra.props}. Then, for any prime $l \ne char(k)$,  the
spectrum 
$E{\overset m {\overbrace {{{ {\overset L {\underset {\Sigma} \wedge}} H(Z/{\ell})}} \cdots {{ {\overset L {\underset {\Sigma} \wedge}} H(Z/{\ell})}}}}}$
in $\Spt_{\rmS^1}(k)$ also satisfies the same properties.
\end{lemma}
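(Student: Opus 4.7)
The plan is to induct on $m$, reducing the statement to the single smashing case: if $E$ satisfies the properties in \ref{spectra.props}, then so does $E' = E \overset{L}{\underset{\Sigma}{\wedge}} H(\Z/\ell)$. Throughout, I will use that the derived smash product $(-) \overset{L}{\underset{\Sigma}{\wedge}} H(\Z/\ell)$ is an exact functor on the stable homotopy category of $S^1$-spectra, in particular it preserves stable weak-equivalences, cofiber/fiber sequences, and filtered homotopy colimits. The functor $\Gamma_Y(X, -)$, viewed levelwise (via a functorial fibrant replacement in the motivic stable model structure), commutes up to weak-equivalence with the derived smash product by a fixed spectrum, since $\Gamma_Y(X, E)$ is constructed as the homotopy fiber of the restriction $\H(X, E) \to \H(X \setminus Y, E)$, and hypercohomology of a fixed site commutes with the levelwise action of smashing with a constant spectrum.

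Granting this, properties (ii)-(vii) transfer formally from $E$ to $E'$: each is the assertion that a particular map of the form $\Gamma_Y(X, E) \to \Gamma_{Y'}(X', E)$ is a weak-equivalence; applying $(-) \overset{L}{\underset{\Sigma}{\wedge}} H(\Z/\ell)$ to both sides yields the corresponding map for $E'$, which is then again a weak-equivalence. For connectivity (i), one uses the standard connectivity estimate for smash products of connective $S^1$-spectra: if $E$ is $N$-connected and $H(\Z/\ell)$ is $(-1)$-connected, the derived smash product is at least $N$-connected. For (viii), one notes that
\[
\Gamma(X, E') = \Gamma(X, E) \overset{L}{\underset{\Sigma}{\wedge}} H(\Z/\ell) \simeq \bigl(\colim_{\alpha} \Gamma(X_\alpha, E)\bigr) \overset{L}{\underset{\Sigma}{\wedge}} H(\Z/\ell) \simeq \colim_{\alpha} \Gamma(X_\alpha, E'),
\]
using that derived smashing commutes with filtered homotopy colimits.

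The crucial step, and the one requiring an actual computation, is property (ix): invariance under purely inseparable maps, which holds for $E$ only under the hypothesis that its homotopy presheaves are $\ell$-primary torsion. Here we need the same hypothesis for $E'$. This follows from the universal coefficient spectral sequence (equivalently, the short exact sequence arising from the cofiber sequence $\Sigma \xrightarrow{\ell} \Sigma \to M(\ell)$ and the relation between $M(\ell)$ and $H(\Z/\ell)$): for any spectrum $F$, the presheaf of homotopy groups of $F \overset{L}{\underset{\Sigma}{\wedge}} H(\Z/\ell)$ fits in a short exact sequence
\[
0 \to \pi_n(F) \otimes \Z/\ell \to \pi_n(F \overset{L}{\underset{\Sigma}{\wedge}} H(\Z/\ell)) \to \mathrm{Tor}_1^{\Z}(\pi_{n-1}(F), \Z/\ell) \to 0,
\]
and both outer terms are killed by $\ell$. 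Hence even without any hypothesis on $\pi_*(E)$, the homotopy groups of $E'$ are $\ell$-primary torsion, so property (ix) for $E$ transfers to $E'$ (indeed holds unconditionally for $E'$).

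The main obstacle I anticipate is the compatibility of $\Gamma_Y(X,-)$ with the derived smash product, because $\Gamma_Y(X, E)$ is defined via hypercohomology (involving a homotopy limit). One must verify that the natural map $\Gamma_Y(X, E) \overset{L}{\underset{\Sigma}{\wedge}} H(\Z/\ell) \to \Gamma_Y(X, E')$ is a weak-equivalence; this is a form of ``projection formula'' that holds because $H(\Z/\ell)$ has finitely generated homotopy groups in each degree, so derived smashing with it commutes with the Postnikov tower computing hypercohomology. Once this compatibility is established, the remaining checks are routine.
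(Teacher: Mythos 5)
Your proposal follows essentially the same route as the paper's own proof: reduce to $m=1$, observe that the excision and localization properties are assertions about finite homotopy (co)fiber sequences preserved by the exact functor $(-){\overset L {\underset {\Sigma} \wedge}} H(\Z/{\ell})$, and use the resulting Brown--Gersten/descent property to identify sections with derived sections for the smashed spectrum (your ``projection formula''), after which properties (i)--(vii) transfer formally. The paper's proof only treats (i)--(vii) explicitly, so your additional remarks on (viii) and (ix) --- in particular that the homotopy presheaves of $E{\overset L {\underset {\Sigma} \wedge}} H(\Z/{\ell})$ are automatically $\ell$-primary torsion --- go slightly beyond it but are consistent with the paper's intent.
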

\begin{proof} Clearly it suffices to consider the case $m=1$. The hypothesis that a spectrum $E$ has Nisnevich excision, implies it has what is
called the Brown-Gersten property (see \cite[section 3, Definition 1.13]{MV}), so that it has Nisnevich descent. Now one may observe readily that if $E$ has Nisnevich
excision and Zariski localization, then so does $E{\overset L {\underset {\Sigma} \wedge}} H(Z/{\ell})$. Therefore, for every scheme $X \in \Sm/\ok$ and a closed
subscheme $Y \subseteq X$, the obvious map $\Gamma_Y(X, E{\overset L {\underset {\Sigma} \wedge}} H(Z/{\ell})) \ra 
R\Gamma_Y(X, E{\overset L {\underset {\Sigma} \wedge}} H(Z/{\ell}))$ is a weak-equivalence. This readily proves that all the properties
in ~\ref{spectra.props}(i) through (vii) hold for $E{\overset L {\underset {\Sigma} \wedge}} H(Z/{\ell})$ if they hold for $E$.
\end{proof}
\vskip .3cm

  The work of \cite{Lev} and \cite{Pel} show that now one can define a sequence of functors $f_n: \Spt_{S^1}(k) \ra \Spt_{S^1}(k)$
so that the following properties are true:
\subsubsection{}
\begin{enumerate}[\rm(i)]
\label{props.mot.Post}
 \item One obtains a map $f_n E \ra E$ that is natural in the spectrum $E$. 
\item One also obtains a tower of maps $\cdots \ra f_{n+1}E \ra f_n E \ra \cdots  \ra f_0E = E$. Let $s_pE$ denote the
canonical homotopy cofiber of the map $f_{p+1}E \ra f_pE$ and let $P_{\le q-1}E$ be defined as the canonical homotopy cofiber of the map $f_qE \ra E$. 
Then one may show readily (see, for example, \cite[Proposition 3.1.19]{Pel}) that the canonical homotopy fiber of the induced map $P_{\le q}E \ra P_{\le q-1}E$ identifies also with $s_q(E)$. One then also obtains a tower of fibrations $\cdots \ra P_{\le q}E \ra P_{\le q-1}E \ra \cdots$. 
\item Let $Y $ be a smooth scheme of finite type over $\ok$ and let $W \subseteq Y$ denote a closed not necessarily smooth subscheme so that
$codim_Y(W) \ge q$ for some $q \ge 0$. Then the map $f_qE \ra E$ induces a weak-equivalence (see 
\cite[Lemma 7.3.2]{Lev} and also \cite[Lemma 2.3.2]{L-K}):
\[\Map(\Sigma_{S^1}(Y/Y-W)_+, f_qE) \ra \Map(\Sigma_{S^1}(Y/Y-W)_+, E), E \eps \Spt_{S^1}(k).\]
\item It follows that, then, 
\[\Map(\Sigma_{S^1}(Y/Y-W)_+, P_{\le q-1}E) \simeq *\]
under the same hypotheses on $Y$ and $W$. (This makes use of the property that if $E$ is homotopy invariant and
has Nisnevich excision and Zariski localization, then the terms $f_pE$ and hence $s_pE$ have the same properties. These follow readily
by identifying the terms $f_pE$ with the terms in the homotopy coniveau tower as in \cite{Lev}.)
\item Let $X$ be a smooth scheme of finite type over $\ok$ and of dimension $d$, and let $n$ be a non-negative integer. 
 Then, since $E$ assumed to be $-1$-connected, $\pi_n(\Map (\Sigma_{S^1}X_+, s_pE))=0$, if $p >dim_k(X)+n$: see \cite[proof of Proposition 2.1.3]{Lev}. 
\item Recall that $s_{p+m}E $ identifies with the homotopy fiber of the
induced map $P_{\le p+m}E \ra P_{\le p+m-1}E$.  It follows by making use of the Milnor exact sequence
for 
$\pi_n(\holimm \Map(\Sigma_{S^1}X_+, P_{\le p+m}E)) $, that for $p >dim_k(X)+n+1$, 
\[\pi_n(\holimm \Map(\Sigma_{S^1}X_+, P_{\le p+m}E))  \cong \pi_n(\Map(\Sigma_{S^1}X_+, P_{\le p}E)).\]
To see this, consider the long-exact-sequence for $i=0, 1$, with $m \ge 1$:
\be \begin{multline}
     \begin{split}
\pi_{n+i}\Map(\Sigma_{S^1}X_+, s_{p+m}E) \ra \pi_{n+i}\Map(\Sigma_{S^1}X_+, P_{\le p+m}E) \ra \pi_{n+i}\Map(\Sigma_{S^1}X_+, P_{\le p+m-1}E) \\ \notag
\ra \pi_{n+i-1}(\Map(\Sigma_{S^1}X_+, s_{p+m}E)) \notag \end{split} \end{multline}
If $p> dim_k(X)+n+1$, then $p+m > dim_k(X)+n+1+m \ge dim_k(X)+n+1 > dim_k(X)+n > dim_k(X)+n-1$, so that
\[\pi_{n+j}(\Map(\Sigma_{S^1}X_+, s_{p+m}E)) =0, \mbox{ for } j=-1, 0, 1\]
so that the map $P_{\le p+m}E \ra P_{\le p+m-1}E$ induces an isomorphism
\[\pi_{n+i}(\Map(\Sigma_{S^1}X_+, P_{\le p+m}E)) \cong \pi_{n+i}(\Map(\Sigma_{S^1}X_+, P_{\le p+m-1}E)), i=0,1.\]
\item
Now one may make use of \cite[Proposition 2.1.3]{Lev} to conclude that, if $E$ is also homotopy invariant, then 
$\pi_n(\holimm \Map(\Sigma_{S^1}X_+, P_{\le p+m}E)) \cong \pi_n(\Map(\Sigma_{S^1}X_+, E)) $ so that if $p >dim_k(X)+n+1$, then
\[\pi_n(\Map(\Sigma_{S^1}X_+, P_{\le p}E)) \cong \pi_n(\Map(\Sigma_{S^1}X_+, E)).\]
\end{enumerate}
\begin{remarks} We use the notation $P_{\le q}E$, for obvious reasons,  to denote the motivic Postnikov truncation that
 kills all the slices $s_pE$ for $p >q$.  It is shown in \cite{Pel} that the functors $f_q$, $s_q$ and
$P_{\le q}$ lift to the level of model categories, though this fact is not important for us.
\end{remarks}
\vskip .3cm
Next we proceed to adapt the motivic Postnikov truncation functors in the context of an inverse system of Borel constructions as
in Definition ~\ref{def.Borel.coh.pro}. We will consider associated to each $t \eps \K$,
\be \begin{align}
     \label{Post.equiv.ge.coho.pro.grps}
H(E\bG^{gm, s} \times_{\bG} \bX, P_{\le t}E) &= \colima \{\H(E\bG_a^{gm, s} \times_{\bG_a} \bX_a, P_{\le t(a)}E)|a \eps \A\} \\
\H_{\bG}(\bX, P_{\le t}E) &= \holims \H(E\bG^{gm,s} \times_{\bG} \bX, P_{\le t}E) \notag \\
\holimt \H_{\bG}(\bX, P_{\le t}E) &= \holims \holimt \H(E\bG^{gm,s} \times_{\bG} \bX, P_{\le t}E) \notag \end{align} \ee
\vskip .3cm
\begin{proposition}
 \label{no.need.for.Post}
Assume the above situation. Then the two spectra
\[\holims \H(E\bG^{gm,s} \times_{\bG} \bX, E) \mbox{ and }  \holims \holimt \H(E\bG^{gm,s} \times_{\bG} \bX, P_{\le t}E)\]
are  weakly-equivalent.
\end{proposition}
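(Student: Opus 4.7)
The plan is to reduce the claim to showing, for each fixed $s \eps \K$, a weak-equivalence
\[ \H(E\bG^{gm,s} \times_{\bG} \bX, E) \simeq \holimt \H(E\bG^{gm,s} \times_{\bG} \bX, P_{\le t}E), \]
after which applying $\holims$ to both sides yields the proposition. First I would observe that for each $a \eps \A$, the scheme $Y_{a,s} := E\bG_a^{gm, s(a)} \times_{\bG_a} \bX_a$ is smooth of some finite dimension $d(a,s)$ over $\ok$. Hence property (vii) of the motivic Postnikov tower listed just above, combined with the Milnor $\lim^1$ sequence, gives for each integer $n$ an isomorphism
\[ \pi_n \H(Y_{a,s}, E) \xrightarrow{\sim} \pi_n \H(Y_{a,s}, P_{\le m}E) \]
whenever $m > d(a,s) + n + 1$. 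Thus the stabilization of the motivic Postnikov tower at each individual $Y_{a,s}$ is the essential input.

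The main step is to commute the filtered colimit $\colima$ (appearing inside $\H(E\bG^{gm,s}\times_{\bG}\bX, P_{\le t}E)$) with the homotopy limit $\holimt$ over non-decreasing sequences $t \eps \K$. The idea is to fix $n$ and compute $\pi_n$ of both sides via the Milnor short exact sequence. Setting $f(a) := d(a,s) + n + 2$, the cofinite hypothesis on $\A$ renders the function $t_0(a) := \max_{b \le a} f(b)$ well-defined, and it is non-decreasing, so $t_0 \eps \K$. The subsystem $\{t \eps \K : t \ge t_0\}$ is cofinal in $\K$, and on this subsystem each $\pi_n \H(Y_{a,s}, P_{\le t(a)}E)$ coincides with $\pi_n \H(Y_{a,s}, E)$. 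Since filtered colimits commute with homotopy groups, one obtains
\[ \lim_t \colima \pi_n \H(Y_{a,s}, P_{\le t(a)}E) \cong \colima \pi_n \H(Y_{a,s}, E) \cong \pi_n \H(E\bG^{gm,s}\times_{\bG}\bX, E), \]
and the corresponding $\lim^1$ terms vanish by the same stabilization. The Milnor sequence then identifies $\pi_n$ of $\holimt \H(E\bG^{gm,s}\times_{\bG}\bX, P_{\le t}E)$ with $\pi_n \H(E\bG^{gm,s}\times_{\bG}\bX, E)$ for every $n$, giving the per-$s$ equivalence. Applying $\holims$ then concludes.

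The main obstacle is precisely this interchange, since filtered colimits of spectra do not commute with arbitrary homotopy limits. The argument overcomes this by passing to homotopy groups and exploiting that, for each fixed degree $n$, the motivic Postnikov tower stabilizes on every finite-dimensional smooth scheme $Y_{a,s}$ in a manner that can be uniformly dominated by a non-decreasing element $t_0 \eps \K$ (thanks to the cofinite structure of $\A$); restricting to the cofinal subtower $t \ge t_0$ forces the $\lim^1$ terms to vanish and makes the $\lim$ computable termwise in the filtered colimit.
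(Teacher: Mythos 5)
Your proposal is correct and follows essentially the same route as the paper: fix the degree $n$, use the finite dimensionality of each $E\bG_a^{gm,s}\times_{\bG_a}\bX_a$ together with property (vii) of the motivic Postnikov tower to choose a non-decreasing $t$ dominating $\dim + n + 2$ termwise, conclude that the truncation maps are isomorphisms on $\pi_n$ and $\pi_{n+1}$ after the filtered colimit over $\A$, and then invoke the Milnor sequence to identify $\pi_n$ of $\holimt$ with $\pi_n$ of the untruncated colimit, finally applying $\holims$. Your explicit construction of $t_0(a)=\max_{b\le a}f(b)$ via cofiniteness of $\A$ just makes precise the paper's remark that such a sequence "may clearly be chosen."
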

\begin{proof}Observe that $\holims \H(E\bG^{gm,s} \times_{\bG} \bX, E) $ identifies with
 \vskip .3cm
$\holims \colima \{\H(E\bG_a^{gm,s} \times_{\bG_a} \bX_a, E)|a \eps \A\}$ while  $\holims \holimt \H(E\bG^{gm,s} \times_{\bG} \bX, P_{\le t}E)$ identifies with
\vskip .3cm
$\holims \holimt \colima \{\H(E\bG_a^{gm,s} \times_{\bG_a} \bX_a, P_{\le t(a)}E)|a \eps \A\} $
\vskip .3cm
$= \holims \holimt \colima \{\H(\Pi_{b \le a}E{\mathfrak G}_b^{gm, s(b)} \times_{\Pi_{b\le a}{\mathfrak G}_b} \bX_a, P_{\le t(a)}E)|a \eps \A\}$.
\vskip .3cm
Assume that the geometric classifying spaces $\{E\frG_b^{gm}|b\}$ are defined using the choice of a good-pair $(W_b, U_b)$. Observe then
that the dimension of $E\bG_a^{gm,s} \times_{\Pi_{b\le a}{\mathfrak G}_b} \bX_a$ is $\Sigma_{b \le a}s(b)dim_k(W_b)+ dim_k(\bX_a)-
(\Sigma_{b \le a} dim_k(\frG_b))$. 
For any fixed non-negative integer $n$, and given any sequence $s \eps \K$, one may clearly choose a sequence $t \eps \K$, so that for each $a \eps \A$,
$t(a) \ge  dim_k(E\bG_a^{gm,s} \times_{\Pi_{b\le a}{\mathfrak G}_b} \bX_a)+ n+2$. It follows from the last property in ~\ref{props.mot.Post}
that this implies the map
\[\H(\Pi_{b \le a}E{\mathfrak G}_b^{gm, s(b)} \times_{\Pi_{b\le a}{\mathfrak G}_b} \bX_a, P_{\le t(a)}E) \ra 
 \H(\Pi_{b \le a}E{\mathfrak G}_b^{gm, s(b)} \times_{\Pi_{b\le a}{\mathfrak G}_b} \bX_a, E)
\]
induces an isomorphism on taking the $n$-th and $n+1$-st homotopy groups and for all $ a \eps \A$. It follows now that the induced
map 
\[\holimt \colima \H(\Pi_{b \le a}E{\mathfrak G}_b^{gm, s(b)} \times_{\Pi_{b\le a}{\mathfrak G}_b} \bX_a, P_{\le t(a)}E) \ra 
 \colima \H(\Pi_{b \le a}E{\mathfrak G}_b^{gm, s(b)} \times_{\Pi_{b\le a}{\mathfrak G}_b} \bX_a, E)
\]
is a weak-equivalence for each fixed $ s \eps \K$ and therefore again is a weak-equivalence on taking $\holims$.
\end{proof}
Next assume that $\rmG$ is a linear algebraic group and $(W, U)$,  $(\bar W,\bar U)$ are both good pairs  for $\rmG$. Let $\{(W_m, U_m)|m \ge 1\}$
 and $\{(\bar W_m, \bar U_m)|m \ge 1\}$ denote the associated admissible gadgets. Then, since $\rmG$ acts freely on both $U_m$ and $\bar U_m$, it 
is easy to see  that $(W \times \bar W, U \times \bar W \cup W \times \bar U)$ is also a good pair for $\rmG$ with
respect to the diagonal action on $W \times \bar W$. Moreover, under the same hypotheses,  Proposition ~\ref{new.adm.gadgets}(iii) shows that $\{W_m \times \bar W_m, U_m \times \bar W_m \cup W_m \times \bar U_m)|m \ge 1\}$
 is also an admissible gadget for $\rmG$ for the diagonal action on $W \times \bar W$. Let $X$ denote a smooth scheme of finite type over $\ok$
on which $\rmG$ acts.
\vskip .3cm
Since $\rmG$ acts freely on both $U_m$ and $\bar U_m$, it follows that $\rmG$ has a free action on $U_m \times \bar W_m$ and also on
$W_m \times \bar U_m$. We will let $\widetilde U_m = U_m \times \bar W_m \cup W_m \times \bar U_m$ for the following discussion.
One may now compute the codimensions 
\be \begin{align}
\label{codims}
codim_{\widetilde U_m{\underset {\rmG} \times}X}({\widetilde U}_m {\underset {\rmG} \times}X- U_m \times \bar W_m{\underset {\rmG} \times}X) &= codim_{W_m}(W_m -U_m), \mbox{ and }  \\
codim_{\widetilde U_m{\underset {\rmG} \times}X}({\widetilde U}_m{\underset {\rmG} \times}X - W_m \times \bar U_m{\underset {\rmG} \times}X) &= codim_{\bar W_m}(\bar W_m -\bar U_m) 
\end{align} \ee
\vskip .3cm \noindent
In view of ~\ref{props.mot.Post}(iv), it follows that the induced maps
\be \begin{multline}
     \begin{split}
\label{uniqueness.pro.1}
Map(\Sigma_{S^1}U_m \times _{\rmG}X_+, P_{\le q-1}E) \simeq Map(\Sigma_{S^1}(U_m \times \bar W_m) \times_{\rmG}X_+, P_{\le q-1}E)\\
 \ra Map(\Sigma_{S^1}{\widetilde U}_m\times _{\rmG} X_+, P_{\le q-1}E) \mbox{ and }\\
 Map(\Sigma_{S^1}\bar U_m \times _{\rmG}X_+, P_{\le q-1}E) \simeq Map(\Sigma_{S^1}(\bar W_m \times U_m ) \times_{\rmG}X_+, P_{\le q-1}E) \\
\ra Map(\Sigma_{S^1}{\widetilde U}_m\times _{\rmG} X_+, P_{\le q-1}E)
\end{split} \end{multline} \ee
are both weak-equivalences if $codim_{W_m}(W_m -U_m)$ and $codim_{\bar W_m}(\bar W_m -\bar U_m) $ are both 
greater than or equal to $q$. The first weak-equivalences in ~\eqref{uniqueness.pro.1} are provided by the homotopy property for the 
spectrum $E$, which is inherited by the Postnikov-truncations (as shown in \cite[(2.2)(2)]{Lev}.)
\vskip .3cm
Next assume that $Y$ is a smooth scheme of finite type over $\ok$ provided with a {\it free} action by $\rmG$. Now $\rmG$ acts freely on 
both $W_m \times Y \times X$ as well as on $U_m \times Y \times X$. One may compute
\be \begin{equation}
     \label{codim.Y}
codim_{W_m \times _{\rmG} (X \times Y)}(W_m \times _{\rmG} (X \times Y) - U_m \times_{\rmG}(X \times Y)) = codim_{W_m}(W_m -U_m)
    \end{equation} \ee
\vskip .3cm \noindent
Again, in view of ~\ref{props.mot.Post}(iv), it follows that the induced map
\be \begin{multline}
\label{uniqueness.pro.2}
     \begin{split}
Map(\Sigma_{S^1}U_m \times _{\rmG}(X \times Y)_+, P_{\le q-1}E) \ra Map(\Sigma_{S^1}W_m  \times_{\rmG}(X\times Y)_+, P_{\le q-1}E)\\
 \simeq Map(\Sigma_{S^1}(X\times _{\rmG} Y)_+, P_{\le q-1}E) 
\end{split} \end{multline} \ee
 is a weak-equivalence if $codim_{W_m}(W_m -U_m) \ge q$. The last weak-equivalence is again provided by the homotopy property for the 
spectrum $E$, which is inherited by the Postnikov-truncations (as shown in \cite[(2.2)(2)]{Lev}). One may recall that if the admissible gadgets are chosen as in 
~\eqref{adm.gadget.1}, then  $codim_{W_m}(W_m-U_m) = m.codim_W(W-U)$
and $codim_{\bar W_m}(\bar W_m-\bar U_m) = m.codim_{\bar W}(\bar W- \bar U)$, so that both $codim_{W_m}(W_m -U_m) \ge q$ and
$codim_{\bar W_m}(\bar W_m -\bar U_m) \ge q$ if $m$ is chosen large enough.
\vskip .3cm
Next assume the situation considered in Definition  ~\ref{def.Borel.coh.pro}. Assume in addition that for each $b \eps \A$,
$(W_b, U_b)$ and $(\bar W_b, \bar U_b)$ are two choices of good pairs and that $\{(W_{b,m}, U_{b,m}|m \ge 1\}$ and 
$\{(\bar W_{b,m}, \bar U_{b,m})| m \ge 1\}$ are the corresponding associated admissible gadgets chosen as in ~\ref{adm.gadget.1}. We will denote the
geometric classifying spaces obtained from $\{ (W_{b,m}, U_{b, m})|m \ge 1\}$ ($\{(\bar W_{b,m}, \bar U_{b,m})|m \ge 1\}$ by
$\{EG_b^{gm}|b, m\}$ ($\{E\bar G_b^{gm}|b , m\}$, \res). Assume further that
one is provided with compatible actions by $\bG_a = \Pi_{b \le a} \frG_b$ on $\bX_a$ and $\bY_a$ which are smooth schemes
of finite type over $\ok$ and that the action of $\bG_a$ on $Y_a$ is free. 
\begin{theorem}
 \label{uniqueness.pro.grp.actions}
Assume the above situation. (i) Then one obtains weak-equivalences:
\be \begin{equation}
      \holimt \holims \colima \H(E\bG^{gm,s}_a\times _{\bG_a}\bX_a, P_{\le t}E) \simeq \holimt \holims \colima \H(E{\bar {\bG}}_a^{gm,s}\times _{{\bar {\bG_a}}}\bX_a, P_{\le t}E).
\end{equation} \ee
\vskip .3cm \noindent
Moreover $\holimt \holims \colima \H(E\bG_a^{gm,s}\times _{\bG_a}\bX_a, P_{\le t}E) \simeq  \holims \colima \H(E\bG_a^{gm,s}\times _{\bG_a}\bX_a, E)$ and
\vskip .3cm
$\holimt \holims \colima \H(E{\bar {\bG}}_a^{gm,s}\times _{{\bar {\bG_a}}}\bX_a, P_{\le t}E) \simeq  \holims \colima \H(E{\bar {\bG}}_a^{gm,s}\times _{{\bar {\bG_a}}}\bX_a, E)$.
\vskip .3cm
(ii) $ \holims \colima \H(E\bG_a^{gm,s}\times _{\bG_a}(\bX_a \times \bY_a), E) \simeq \holimt \holims \colima \H(E\bG_a^{gm,s}\times _{\bG_a}(\bX_a \times \bY_a), P_{\le t}E)$ 
\vskip .3cm
$ \simeq  \holimt \colima \H(\bX_a\times _{\bG_a} \bY_a, P_{\le t}E) \simeq \colima \H(\bX_a\times _{\bG_a} \bY_a, E). $
\end{theorem}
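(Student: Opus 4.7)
The plan is to reduce everything to the Postnikov-truncated level, build a common ``combined'' admissible gadget for both choices, and then apply the weak-equivalences already recorded in \eqref{uniqueness.pro.1} and \eqref{uniqueness.pro.2}. More concretely, Proposition~\ref{no.need.for.Post} immediately yields the two ``moreover'' identifications in (i), so that the statement in (i) is equivalent to proving the Postnikov-truncated assertion $\holimt \holims \colima \H(E\bG^{gm,s}_a\times_{\bG_a}\bX_a, P_{\le t}E) \simeq \holimt \holims \colima \H(E\bar\bG^{gm,s}_a\times_{\bar\bG_a}\bX_a, P_{\le t}E)$, and similarly for the chain of weak-equivalences in (ii) once one knows the terminal term. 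Reducing to Postnikov truncations is crucial because the key input \ref{props.mot.Post}(iv) is formulated at that level.

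For part (i), I would, for each $b \in \A$, form the diagonal good pair $(W_b\times\bar W_b,\, U_b\times\bar W_b\cup W_b\times\bar U_b)$ and the associated admissible gadget for $\frG_b$ as in Proposition~\ref{new.adm.gadgets}(iii). Taking products over $b\le a$ as in \eqref{bfG.def} produces, via Proposition~\ref{new.adm.gadgets}(ii), a third inverse system of admissible gadgets for $\bG = \{\bG_a\}$; denote the resulting geometric classifying ind-schemes by $\widetilde{E\bG}_a^{gm,s}$. There are obvious $\bG_a$-equivariant projections $\widetilde{E\bG}_a^{gm,s}\to E\bG_a^{gm,s}$ and $\widetilde{E\bG}_a^{gm,s}\to E\bar\bG_a^{gm,s}$. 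Applying the argument leading to \eqref{uniqueness.pro.1} componentwise (one factor at a time in the product over $b\le a$, using \ref{props.mot.Post}(iv) and the homotopy invariance property of each $P_{\le t(a)}E$), one sees that for any fixed $t\in\K$ both projections induce weak-equivalences on $\Map(\Sigma_{S^1}(-)_+, P_{\le t(a)}E)$, provided the codimensions $\codim_{W_{b,s(b)}}(W_{b,s(b)}\setminus U_{b,s(b)})$ and $\codim_{\bar W_{b,s(b)}}(\bar W_{b,s(b)}\setminus \bar U_{b,s(b)})$ are sufficiently large. By \eqref{adm.gadget.1} these codimensions grow linearly in $s(b)$, and the dimension of $\widetilde E\bG_a^{gm,s}\times_{\bG_a}\bX_a$ is computable as in the proof of Proposition~\ref{no.need.for.Post}. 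Hence for each fixed $t$ the set of $s$ making both projections weak-equivalences at the level $P_{\le t(a)}E$ is cofinal in $\K$, and passing to $\holims$ (using cofinality), $\colima$ (which is filtered, so commutes with the homotopy operations in question), and finally to $\holimt$, yields the required weak-equivalence via the three-way zig-zag through the combined system $\widetilde{E\bG}^{gm,s}$.

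For part (ii), the freeness of the $\bG_a$-action on $\bY_a$ is used exactly as in the derivation of \eqref{uniqueness.pro.2}: the map $E\bG_a^{gm,s}\times_{\bG_a}(\bX_a\times\bY_a)\to \bX_a\times_{\bG_a}\bY_a$ induces a weak-equivalence on $P_{\le t(a)}E$-cohomology once the codimensions are large enough compared to $t(a)$, because the complement in the ambient $W^{gm,s}_a\times_{\bG_a}(\bX_a\times\bY_a)$ has codimension growing linearly with $s$ (as in the calculation \eqref{codim.Y}). Taking $\colima$, $\holims$ and $\holimt$ as before gives the middle weak-equivalence $\holimt \holims \colima \H(E\bG_a^{gm,s}\times_{\bG_a}(\bX_a\times\bY_a), P_{\le t}E) \simeq \holimt \colima \H(\bX_a\times_{\bG_a}\bY_a, P_{\le t}E)$. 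The two outer weak-equivalences follow by a second application of Proposition~\ref{no.need.for.Post}: on the left to the Borel construction on $\bX \times \bY$, and on the right to the (trivial-gadget) system $\bX_a\times_{\bG_a}\bY_a$, since both satisfy its hypotheses.

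The step I expect to be most delicate is the cofinality argument in Step~(i): one must verify that for each $t\in\K$ the subset of $s\in\K$ for which both projections from $\widetilde{E\bG}^{gm,s}$ induce weak-equivalences at the $P_{\le t}E$-level simultaneously for all $a\in\A$ is cofinal, and that this is compatible with taking $\holimt$ afterwards. Because $\A$ is cofiltered and cofinite so that $\dim_k(\bX_a)$ together with the number of factors indexing $\bG_a$ is finite for each fixed $a$, one can produce such an $s$ explicitly by setting $s(b)$ large enough in terms of $t(b')$ for $b'\le a$; the diagonal cofinality of $\K$ then guarantees that the resulting zig-zag is defined in the category of pro-spectra being manipulated, and one concludes by interchanging $\holims$ and $\holimt$, which is legitimate because both are homotopy limits over cofiltered diagrams.
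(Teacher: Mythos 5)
Your proposal is correct and follows essentially the same route as the paper: the zig-zag through the combined admissible gadget of Proposition~\ref{new.adm.gadgets}(iii) together with the codimension estimates behind \eqref{uniqueness.pro.1} and \eqref{uniqueness.pro.2} (choosing $s$ large relative to $t$, then commuting $\holims$ and $\holimt$), with Proposition~\ref{no.need.for.Post} and the dimension bounds of \ref{props.mot.Post}(vi)--(vii) supplying the passage between $P_{\le t}E$ and $E$. The only cosmetic difference is that you phrase the final identification $\colima \H(\bX_a\times_{\bG_a}\bY_a, P_{\le t}E)\simeq\colima \H(\bX_a\times_{\bG_a}\bY_a, E)$ as a degenerate case of Proposition~\ref{no.need.for.Post}, where the paper invokes \ref{props.mot.Post}(vi) directly; the content is the same.
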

\begin{proof} For each fixed sequence $ t \eps \K$, one may clearly choose a sequence $ s \eps \K$ so that for each $ b \eps \A$,
 $s(b)codim_W(W-U) \ge t(b)+1$ and $s(b)codim_{\bar W}(\bar W- \bar U) \ge t(b)+1$. It now follows from ~\eqref{uniqueness.pro.1} that with this choice of $s$, one obtains a
weak-equivalence:
\[\colima  \H(E\bG_a^{gm,s}\times _{\bG_a}\bX_a, P_{\le t}E) \simeq \colima \H(E{\bar {\bG}}_a^{gm,s}\times _{{\bar {\bG_a}}}\bX_a, P_{\le t}E)\]
It follows that for each fixed $t \eps \K$, one obtains a weak-equivalence:
\[\holims \colima  \H(E\bG_a^{gm,s}\times _{\bG_a}\bX_a, P_{\le t}E) \simeq \holims \colima \H(E{\bar {\bG}}_a^{gm,s}\times _{{\bar {\bG_a}}}\bX_a, P_{\le t}E)\]
Taking $\holimt$ now, we obtain the first weak-equivalence in (i). Observe that the two homotopy inverse limits over $s$ and $t \eps \K$
commute. Therefore, one may now take the outer homotopy inverse limit to be $ \holims$. For each fixed non-negative integer $n$,
 and $ s \eps \K$, one may choose a $t \eps \K$, so that $t(a) \ge \Sigma _{b \le a} s(b)dim_k(W_b) + dim_k(\bX_a) - \Sigma_{b \le a} dim_k\frG_b +n+2$.
Then, Proposition ~\ref{no.need.for.Post} completes the proof of the remaining assertions in (i).
\vskip .3cm
Next we consider (ii). Recall $E\bG_a^{gm,s} = \Pi_{b \le a} E\frG_b^{gm, s(b)} = \Pi_{b \le a} U_b^{s(b)}$ and that $\bG_a = \Pi_{b \le a}\frG_b$. 
Therefore, the weak-equivalence in (ii) is
provided by ~\eqref{uniqueness.pro.2} which shows for each fixed $t \eps \K$, one may choose an $s \eps \K$ so that the map
\[\colima \H(\Pi_{b \le a}U_b^{s(b)}\times _{\bG_a} (\bX_a \times \bY_a), P_{\le t}E) 
 \ra  \colima \H(\Pi_{b \le a}W_b^{s(b)}\times _{\bG_a} (\bX_a \times \bY_a), P_{\le t}E) \]
is a weak-equivalence. It follows therefore that the map
\[\holimt \holims \colima \H(\Pi_{b \le a}U_b^{s(b)}\times _{\bG_a} (\bX_a \times \bY_a), P_{\le t}E) 
 \ra  \holimt \holims \colima \H(\Pi_{b \le a}W_b^{s(b)}\times _{\bG_a} (\bX_a \times \bY_a), P_{\le t}E) \]
is also a weak-equivalence. Now the statement in (i) shows that one may omit the first homotopy inverse limit $\holimt$.
As in ~\eqref{uniqueness.pro.2}, the homotopy property for the spectrum $E$, which is inherited by the Postnikov-truncations 
then provides the weak-equivalence:
\[\holimt \holims \colima \H(\Pi_{b \le a}W_b^{s(b)}\times _{\bG_a} (\bX_a \times \bY_a), P_{\le t}E) \simeq 
 \holimt \colima \H(\bX_a\times_{\bG_a} \bY_a, P_{\le t}E).
\]
Since $\bX_a$ and $\bY_a$ are schemes of finite type over $\ok$, for each non-negative integer $n$, one may choose a sequence $t \eps \K$, 
so that $t(a) \ge dim_k(\bX_a \times _{\bG_a} \bY_a) +n+2$. Then, ~\ref{props.mot.Post}(vi) shows that the induced map
\[\colima \H(\bX_a\times _{\bG_a} \bY_a, P_{\le t}E) \ra \colima \H(\bX_a\times _{\bG_a}  \bY_a, E) \]
is an isomorphism on the $n$-th homotopy groups. Therefore, the last weak-equivalence in (ii) follows.

\end{proof}
\begin{corollary}
 Assume $E$ denotes any one of the following spectra (a) the ${\mathbb P}^1$-ring spectrum, $\bK$ representing algebraic K-theory on $\Sm/k$ or
 (b) the ${\mathbb P}^1$-ring spectrum 
${\overset m {\overbrace {{{\bK {\overset L {\underset {\Sigma} \wedge}} H(Z/{\ell})}} \cdots {{\bK {\overset L {\underset {\Sigma} \wedge}} H(Z/{\ell})}}}}}$
for some $m \ge 1$.
 Assume also that the remaining hypotheses of the last theorem hold. Then we obtain the
weak-equivalences:
\[\holims \colima \H(E\bG_a^{gm,s}\times _{\bG_a}X_a, E) \simeq \holims \colima \H(E{\bar {\bG}}_a^{gm,s}\times _{{\bar {\bG_a}}}X_a, E) \mbox{ and}\]
\[\holims \colima \H(E\bG_a^{gm,s}\times _{\bG_a}(X_a \times Y_a), E) \simeq \colima \H(X_a\times _{\bG_a} Y_a, E).\]
\end{corollary}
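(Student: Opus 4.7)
The plan is to deduce the corollary as an immediate specialization of Theorem~\ref{uniqueness.pro.grp.actions}, once the hypotheses needed to invoke that theorem are verified for the spectra in question. In other words, the substantive work is not in this corollary itself but rather in checking that both $\mathbf{K}$ and its iterated derived smash powers with $H(\mathbb{Z}/\ell)$ fit into the framework of spectra satisfying the properties listed in~\ref{spectra.props}.

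First I would verify the hypotheses of~\ref{spectra.props} for the spectrum $E$. For $E = \mathbf{K}$, connectivity ($-1$-connected), homotopy invariance, Zariski and Nisnevich excision, Zariski localization, $\mathbb{P}^1$-suspension invariance, and homotopy purity are all standard properties of the algebraic K-theory spectrum on $\Sm/k$. The stability under purely inseparable extensions holds after $\ell$-adic completion by classical rigidity results, so the hypothesis in~\ref{spectra.props}(ix) is in force for the mod-$\ell$ variants. Then, Lemma~\ref{smah.preservers.good.props} upgrades this to case (b): the iterated derived smash product $\overbrace{\mathbf{K} \wedge^L_\Sigma H(\mathbb{Z}/\ell) \wedge \cdots \wedge H(\mathbb{Z}/\ell)}^{m}$ also satisfies all properties in~\ref{spectra.props}. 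One small technical point I would address here: the theorem is formulated for $S^1$-spectra, while $\mathbf{K}$ is naturally a $\mathbb{P}^1$-spectrum; as noted in the remarks earlier in the paper, one passes via the associated bi-spectrum and extracts the underlying $S^1$-spectrum, whose hypercohomology computes the same K-groups.

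With the hypotheses in hand, I would obtain the first weak equivalence of the corollary by direct application of Theorem~\ref{uniqueness.pro.grp.actions}(i): the inverse system of admissible gadgets $\{E\bG_a^{gm,s}\}$ may be replaced by $\{E\bar{\bG}_a^{gm,s}\}$ without changing $\holims\colima\H(-,E)$, the Postnikov tower $\holimt P_{\le t}E$ collapsing to $E$ by the connectivity and finite-dimensionality arguments of Proposition~\ref{no.need.for.Post}. Similarly, the second weak equivalence is exactly the content of Theorem~\ref{uniqueness.pro.grp.actions}(ii) when specialized to the inverse system $\bY_a = Y_a$ of free $\bG_a$-spaces: the geometric classifying space in the Borel construction $E\bG_a^{gm,s}\times_{\bG_a}(X_a \times Y_a)$ becomes redundant, by the homotopy-purity-style weak equivalence~\eqref{uniqueness.pro.2}, and one recovers $\colima\H(X_a\times_{\bG_a}Y_a,E)$.

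The main obstacle I anticipate is not a proof obstacle but a bookkeeping one: making precise the passage between $\mathbb{P}^1$-spectra (how K-theory is naturally presented) and the $S^1$-spectrum framework of Theorem~\ref{uniqueness.pro.grp.actions}, together with verifying that the hypotheses in~\ref{spectra.props}(ix) really do apply after $\ell$-adic completion for the mod-$\ell$ variants. Once that translation is fixed and the spectra are in the required framework, the corollary is a formal consequence of the two parts of Theorem~\ref{uniqueness.pro.grp.actions}.
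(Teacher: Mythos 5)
Your proposal is correct and follows essentially the same route as the paper: the paper's own proof simply notes that the $S^1$-spectrum underlying the $\mathbb{P}^1$-spectrum $\mathbf{K}$ satisfies the properties of~\ref{spectra.props} (with Lemma~\ref{smah.preservers.good.props} handling the smash powers) and then invokes Theorem~\ref{uniqueness.pro.grp.actions}. Your additional care about the $\mathbb{P}^1$-versus-$S^1$ translation and the conditional nature of property (ix) is a sound elaboration of the same argument rather than a different approach.
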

\begin{proof} The ${\rm S}^1$-spectrum representing algebraic K-theory corresponding to the ${\mathbb P}^1$-spectrum 
satisfies all the required properties listed in ~\ref{spectra.props}. Moreover, it satisfies the defining property to be an
 $\Omega_{{\rm S}^1}$-spectrum except for the
$0$-th term. Therefore, the results of the last theorem apply.
\end{proof}

\section{Rigidity for mod-$\ell$ Borel style generalized cohomology and equivariant K-theory} 
This section is devoted to a complete proof of Theorem ~\ref{main.thm.1}. The main result is Theorem ~\ref{main.thm1.1} whose proof
depends on certain technical results that are discussed later on in this section, in Propositions ~\ref{ext.rigidity} and ~\ref{Phi.preserves.fibrations}.
The strategy of the proof of Theorem ~\ref{main.thm.1} is as follows. 
We will {\it first assume $\rmG$ is a finite group acting on  smooth schemes $X$ and $Y$ of finite type over $\ok$}.
We will assume that $\rmG$ is provided with a closed immersion into the finite product $\bG_1 \times \cdots \times \bG_k$ of algebraic groups 
and that $\bm=(m_1, \cdots, m_k)$ is a  chosen sequence of  non-negative integers. First we fix some notation: for $\bm=(m_1, \cdots, m_k)$, we let
\be \begin{align}
     U^{\bm} &= E\bG_1^{gm, m_1} \times \cdots \times E\bG_k^{gm,m_k} \times Y,\\
      V^{\bm} &= E\bG_1^{gm, m_1} \times \cdots \times E\bG_k^{gm,m_k}{\underset {\rmG}\times }Y, \notag \\
    U^m(i) &= E\bG_i^{gm, m} \times Y \mbox{ and } V^m(i) = E\bG_i^{gm, m} {\underset {\rmG}\times} Y\notag
    \end{align}
\vskip .3cm \noindent
where the geometric classifying spaces are defined making use of some choice of admissible gadgets as in Definition ~\ref{defn:Adm-Gad}.
Let $\pi= \pi^{\bm}: U^{\bm} {\underset {\rmG}\times}X = E\bG_1^{gm, m_1} \times \cdots \times E\bG_k^{gm,m_k} {\underset {\rmG}\times} ( Y \times X) \ra V^{\bm}$  denote the corresponding projection. 
We begin with the weak-equivalences (where the hypercohomology is computed on the Nisnevich site):
\be \begin{align}
     \label{Leray.1}
\H(U^{\bm}{\underset {\rmG}\times}X, E) &\simeq \H(V^{\bm}, R\pi^{\bm  }_*E).
 \end{align} \ee
\vskip .3cm \noindent
This follows readily from the definition of $R\pi^{\bm}_*E$ as $\pi^{\bm }_*({\mathcal G}E)$ where $E \ra {\mathcal G}E$ is a fibrant
replacement or from the definition of $R\pi^{\bm }_*$ if $E$ is already assumed to be fibrant in the injective model structure. 
One may also observe that
\be \begin{align}
\H(E\bG_1^{gm} \times \cdots \times E\bG_k^{gm}{\underset {\rmG}\times}(Y \times X), E) &= \holimbm \H(U^{\bm}{\underset {\rmG}\times}X, E) \mbox{ and }\notag \\
\H(E\bG_1^{gm} \times \cdots \times E\bG_k^{gm}{\underset {\rmG}\times}Y, R\pi^{\bm }_*E) &= \holimbm \H(V^{\bm}, R\pi^{\bm }_*E).\notag
\end{align} \ee
\vskip .3cm
Next we proceed to analyze the stalks of the presheaf $R\pi^{\bm}_{ *}(E)$. Afterwards we will extend the framework to include in the place of the 
scheme $X$, also pro-objects
in the category of smooth schemes of finite type over $\ok$ provided with actions by inverse systems of groups. We will then show that the required rigidity statements follow by taking 
$X$ to be either $\oSpec \, k$ or $\oSpec \, \bar \oF$. 

\vskip .3cm
Observe first that the points of $\colimbm V^{\bm}$ are the residue fields at points of $\cup_{\bn \ge 0}(
S^{n_1, \cdots, n_k})$ where $S^{n_1, \cdots, n_k}= (U^{ n_1}(1)- U^{n_1-1}(1)) \times \cdots \times 
(U^{n_k}(k)- U^{ n_k-1}(k)) {\underset {\rmG}\times} Y$. Let $x_{\bn}$ denote such a
fixed point belonging to the stratum $S^{\bn}$, for some $\bn =\ndots \ge 1$. Observe that if $\O_{\bm}^{x_{\bn}, h}$ denotes the Henselization of $\O_{V^{\bm}}$ at the
point $x_{\bn}$, then there are compatible maps $ \O_{\bm}^{x_{{\bn}, h}} \ra \O_{{\mathbf m}'}^{x_{\bn}, h} $
for ${\mathbf m}' \ge \bm$.
\vskip .3cm
Next we proceed to compute the stalks of the Nisnevich sheaf $R\pi^{\bm}_*E$ at a point $x_{\bn} \eps S^{\bn}$. 
One obtains the following identification, in view of Lemma ~\ref{cartesian} below, 
\vskip .3cm
$(R\pi^{\bm}_*E)_{x_{\bn}} = \colimV \H(V{\underset {V^{\bm}} \times}U^{\bm}{\underset {\rmG}\times}X, E) \simeq \H(\oSpec \, (\O_{V^{\bm}, x_{\bn}}^h){\underset {V^{\bm} }\times}U^{\bm} {\underset {\rmG}\times}X, E)$.
\vskip .3cm
The induced map $\oSpec \, (\O_{V^{\bm}, x_{\bn}}^h){\underset {V^{\bm}} \times}U^{\bm} \ra \oSpec \, (\O_{V^{\bm}, x_{\bn}}^h)$ is a finite \'etale map since $\rmG$ is a finite group and
 the action of $\rmG$ on $U_{\bm}$ is free. Therefore,  $\oSpec \, (\O_{V^{\bm}, x_{\bn}}^h){\underset {V_{\bm}} \times}U^{\bm }\ra \oSpec \, (\O_{V^{\bm}, x_{\bn}}^h)$ breaks up as the
 finite disjoint union of Hensel local rings $B_i$: see \cite[Theorem 4.2 and Corollary 4.3Chapter I]{Mi}. Since $\rmG$ acts freely on $U^{\bn}$, one can see that each
fiber over the point $x_{\bn}$ of $\pi^{\bm}$ consists of the closed points of these local rings which are permuted by $\rmG$. 
Therefore, since each $\oSpec \, B_i$ must be connected, one may in
fact observe that elements of $\rmG$ act transitively on $\sqcup _i \oSpec \, B_i$ and that all the Hensel rings
 $B_i$ are isomorphic. (To see $\oSpec \, B_i$ must be connected, one may argue as follows. Suppose 
 that it is not connected. Then there exist nonzero idempotents
$e_i$, $i=1, 2$, so that $e_1+e_2 =1$ and with $e_1. e_2=0$.  But since $B_i$ is a local ring, it has no idempotents
other than $0$ and $1$.)
It follows one obtains the isomorphism 
\be \begin{equation}
     \label{stalk.decomp.0}
\oSpec \, (\O_{V^{\bm}, x_{\bn}}^h){\underset {V^{\bm}} \times}U^{\bm} \cong \sqcup _{g \eps G} \oSpec \, B
\end{equation} \ee

\vskip .3cm \noindent
where $B$ denotes any one of the Hensel rings $B_i$. Therefore, we obtain the identification
\vskip .3cm
$V{\underset {V^{\bm}} \times}U^{\bm}{\underset {\rmG}\times}X \cong (\oSpec \, B) {\underset {\oSpec \, k} \times} X$. i.e.
One obtains the identification of the stalks:
\be \begin{equation}
\label{stalks.ident.0}
     (R\pi^{\bm}_*E)_{x_{\bn}} \simeq \H( \oSpec \, B {\underset {\oSpec \, k} \times} X, E)
    \end{equation} \ee
\vskip .3cm \noindent
{\it Moreover, the definition of $\oSpec \, B$ shows that it is independent of $X$}.
\vskip .3cm
Next suppose $\{\cX_a | a \eps \A\}$, $\{\cY_a| a \eps \A\}$  are inverse systems of schemes and $\{G_a |a \eps \A\}$ is an inverse system of 
finite groups, so that each 
$\cX_a$ and $\cY_a$ is provided with an action by the  group $G_a$ and
that these actions are compatible. Suppose further that the structure maps of the inverse systems 
$\{\cX_a|a \eps \A\}$ and $\{\cY_a| a \eps \A\}$ are 
flat as maps of schemes. In this situation, we may start with a closed immersion $G_a \ra {\mathfrak G}_a$ for each
$a \eps \A$, with ${\mathfrak G}_a$ being a linear algebraic group. Then we may form an inverse system of 
algebraic groups by replacing each ${\frG}_a$ with the the finite product $\bG_a=\Pi_{b \le a}{\mathfrak G}_b$ where the
structure maps are induced by the obvious projection maps: see the construction in ~\ref{compat.inverse.systems}
for more details. Let $\{s(a)|a \eps \A\}$ denote a sequence with each $s(a)$ a non-negative integer. Now
we let
\be \begin{align}
\label{Us(a)}
     U_a^{{\mathbf s}(a)} &= \Pi_{b \le a} E{\mathfrak G}_b^{s(b)} \times \cY_a, \\
     V_a^{{\mathbf s}(a)} &= (\Pi_{b \le a} E{\mathfrak G}_b^{s(b)}){\underset {G_a} \times} \cY_a \notag
    \end{align} \ee
\vskip .3cm \noindent
We let $\pi^{{\bs}(a)}:   U_a^{{\mathbf s}(a)}{\underset {G_a} \times} \cX_a \ra V_a^{\bs(a)}$ 
 denote the corresponding projections. 
\vskip .3cm
A point $x$ of $\{V_a^{\bs(a)}|a \eps \A\}$ corresponds to a compatible collection of points $\{x_a| a \eps \A\}$ with each $x_a$ being 
a point of
$V_a^{\bs (a)}$. Then the analysis above (see ~\eqref{stalk.decomp.0}) shows that one obtains a decomposition:
\be \begin{equation}
\label{stalk.decomp.1}
\oSpec \, (\O_{V_a^{\bs (a)}, x_a}^h){\underset {V_a^{\bs(a)}} \times}U_a^{\bs(a)} \cong \sqcup _{{g_a} \eps G_a} \oSpec \, B_a
\end{equation} \ee
\vskip .3cm \noindent
Let $E_a$ denote the restriction of the given presheaf of spectra $E$ to the Nisnevich site of $U_a^{\bs(a)}{\underset {G_a} \times} \cX_a$. In view of 
Proposition ~\ref{Phi.preserves.fibrations}, we obtain the identification:
\vskip .3cm
$R\pi^{\bs}_{*}(\Phi(\{E_a|a \eps \A\}))= \Phi(\{R\pi^{\bs(a)}_{*}E_a)|a \eps \A\})$
\vskip .3cm \noindent
where $\pi_a^{\bs(a)}: Top(U^{\bs(a)}_a{\underset {G_a} \times} \cX_a) \ra Top(V^{\bs(a)})$ and 
$\pi^{\bs}: Top(\{U^{\bs(a)}_a{\underset {G_a} \times}\cX_a| a \eps \A\}) \ra Top(\{V_a^{\bs(a)}| a \eps \A\})$ 
are
the induced maps of sites. Therefore, (see Corollary ~\ref{stalk.ident}) one obtains the following identification of the stalk at 
$x = \{x_a |a \eps \A\}$:
\be \begin{equation}
     \label{stalks.ident.1}
R\pi^{\bs}_{*, x}\Phi(\{E_a| a \eps \A\}) \simeq \colima \H(\oSpec B_a {\underset {\oSpec \, k} \times} \cX_a, E_a)
\end{equation} \ee

\vskip .3cm \noindent
where $\oSpec \, B_a$ depends only on $U_a^{\bs(a)}$ and not on $\cX_a$. Next observe that the diagonal 
imbedding of $\A$ in $\A \times \A$ is cofinal, so that
denoting by $\B$ another copy of $\A$, one may identify the stalks above with
\be \begin{equation}
     \label{stalk.ident.2}
\begin{split}
 \colimA \colimB \H(\oSpec \, B_a {\underset {\oSpec \, k} \times} \cX_b, E) \simeq \colimA \colimB \Map_{\Sigma_{\rmS^1}}(\Sigma_{\rmS^1}\cX_{b+} \wedge (\oSpec \, B_a)_+, E)\\
\simeq \colimA \colimB \Map_{\Sigma_{\rmS^1}}(\Sigma_{\P^1}\cX_{b+}, \Map_{\Sigma_{\rmS^1}}(\Sigma_{\rmS^1}(\oSpec \, B_a)_+, E))
\end{split}
\end{equation} \ee
\vskip .3cm \noindent
where $\Map_{\Sigma_{\rmS^1}}$ denotes the internal hom in the category of $\P^1$-spectra on the Nisenvich site of $\Sm/k$.
\vskip .3cm
Next assume that the field $\ok$ is algebraically closed and that $\oF$ is a field containing $\ok$. In view of our remarks in ~\ref{field.simplifications},
 we may assume without loss of generality that $F$ has finite transcendence degree over $k$. Let $\rmG= \rmG_{\oF}$ denote the
absolute Galois group of $\oF$. Let $\bX=\{\cX_a| a \eps \A\}$ denote the constant inverse system given by $\cX_a = \oSpec \, k$, 
where $\ok$ is given algebraically closed field $\ok$ 
and provided with the trivial action by the absolute Galois group ${\rm G}_{\oF}=\{G_a| a \eps \A\}$ of the field $\oF$.
Let $\bZ= \{\cZ_a| a \eps \A\}$ denote the inverse system of finite normal extensions $\{\oSpec \, F_a | a \eps \A\}$ 
of the field $\oF$
 together with the obvious action of ${\rm G}_{\oF}$ on $\oSpec \, \bar \oF$, i.e. a family of compatible actions by 
$G_a = Gal_{\oF}(F_a)$ on $F_a$. (Observe that a finite normal extension is the composition of a finite separable
extension and finite purely inseparable extension. The automorphism group of such a finite normal extension identifies with
the automorphism group of the corresponding separable extension, i.e. with the Galois group $G_a$.)
 In this situation, the obvious map of pro-objects $\bZ=\{\cZ_a|a \eps \A\} \ra  \bX=\{\cX_a | a \eps \A\}$ induced by the 
imbeddings $\{k \ra F \ra F_a| a \eps \A\}$
is ${\rm G}_{\oF}$-equivariant. Therefore, one obtains an inverse system of commutative diagrams for each fixed 
sequence $\bs$:
\vskip .3cm
\be \begin{equation}
     \label{compat.maps}
\xymatrix{{U_a^{\bs(a)}{\underset {G_a} \times}\cZ_a} \ar@<1ex>[r] \ar@<1ex>[d] & {U_a^{\bs(a)}{\underset {G_a} \times}\cX_a} \ar@<1ex>[d]\\
{V_a^{\bs(a)}} \ar@<1ex>[r]^{id} & {V_a^{\bs(a)}}}
  \end{equation} \ee
\begin{lemma}
 \label{cartesian}
Let $\rmG _a$ denote a finite group acting on a  smooth scheme $X_a$ of finite type over $\ok$. Let $\pi:U_a^{\bs(a)}
 \ra V_a^{\bs(a)}$ denote the
principal $\rmG _a$-bundle as before. Then for any $V \ra V_a^{\bs(a)}$ the square
\vskip .3cm
\xymatrix{{(V{\underset {V_a^{\bs(a)}} \times}U_a^{\bs(a)}) {\underset {\rmG _a} \times}\cX_a} \ar@<1ex>[r] \ar@<-1ex>[d] & {V} \ar@<-1ex>[d]\\
{U_a^{\bs(a)}{\underset {\rmG _a} \times}\cX_a} \ar@<1ex>[r] & {V_a^{\bs(a)}}}
\vskip .3cm \noindent
is a cartesian square.
\end{lemma}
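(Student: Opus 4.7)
The plan is to exploit that $\pi: U_a^{\bs(a)} \ra V_a^{\bs(a)}$ is a principal $\rmG_a$-bundle, since the $\rmG_a$-action on $U_a^{\bs(a)}$ is free (by the defining property of an admissible gadget, Definition~\ref{defn:Adm-Gad}(4)) and $V_a^{\bs(a)}$ is its geometric quotient. The desired cartesian square will follow by pulling back a more fundamental cartesian square along $V \ra V_a^{\bs(a)}$ and then using that principal $\rmG_a$-bundles are stable under base change.

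The key preliminary step is to verify that the square with top row the projection $U_a^{\bs(a)} \times \cX_a \ra U_a^{\bs(a)}$, vertical arrows the quotient maps for the diagonal action on the top left and the original action on the top right, and bottom row the induced map $U_a^{\bs(a)}{\underset {\rmG_a} \times}\cX_a \ra V_a^{\bs(a)}$, is itself cartesian. This is essentially a pointwise check: the fiber of $U_a^{\bs(a)}{\underset {\rmG_a} \times}\cX_a \ra V_a^{\bs(a)}$ over $[u]$ is identified with $\cX_a$ via $[u,x] \mapsto x$, since freeness of the $\rmG_a$-action pins down the representative $u$ uniquely. Equivalently, the natural map from $U_a^{\bs(a)} \times \cX_a$ to $U_a^{\bs(a)} \times_{V_a^{\bs(a)}} (U_a^{\bs(a)}{\underset {\rmG_a} \times}\cX_a)$ is an isomorphism, as one can see after passing to the \'etale topology where $\pi$ is a trivial $\rmG_a$-torsor.

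Pulling back this preliminary cartesian square along $V \ra V_a^{\bs(a)}$ preserves cartesianness and produces a cartesian square whose upper-left corner is $(V{\underset {V_a^{\bs(a)}} \times}U_a^{\bs(a)}) \times \cX_a$, upper-right is $V{\underset {V_a^{\bs(a)}} \times}U_a^{\bs(a)}$, lower-left is $V{\underset {V_a^{\bs(a)}} \times}(U_a^{\bs(a)}{\underset {\rmG_a} \times}\cX_a)$, and lower-right is $V$. The right vertical is a principal $\rmG_a$-bundle (being a base change of one), the top horizontal is $\rmG_a$-equivariant with respect to the diagonal action on its source, and cartesianness transfers these properties to the left column, so that its target is the quotient of its source by the diagonal $\rmG_a$-action:
\[
V{\underset {V_a^{\bs(a)}} \times}(U_a^{\bs(a)}{\underset {\rmG_a} \times}\cX_a) \;\cong\; (V{\underset {V_a^{\bs(a)}} \times}U_a^{\bs(a)}){\underset {\rmG_a} \times}\cX_a,
\]
which is exactly the content of the lemma. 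The one point that requires care is ensuring that the balanced product commutes with the base change $V \ra V_a^{\bs(a)}$; this is precisely what the cartesianness of the preliminary square encodes, and it is unproblematic here because $\rmG_a$ is a finite group acting freely on $U_a^{\bs(a)}$, so the quotient exists as a scheme (or at worst a smooth algebraic space) and faithfully flat descent along $\pi$ supplies all the needed identifications.
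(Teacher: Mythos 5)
Your proof is correct, and it takes a genuinely different (though ultimately cognate) route from the paper's. The paper verifies the universal property of the fibered product directly: it interprets a map $Z \ra U_a^{\bs(a)}{\underset {\rmG_a}\times}\cX_a$ as a $\rmG_a$-torsor $\tilde Z \ra Z$ together with an equivariant map $\tilde Z \ra U_a^{\bs(a)}\times \cX_a$, uses the (evident) cartesianness of the square with $U_a^{\bs(a)}\times\cX_a$ in place of the balanced product to produce an induced equivariant map out of $\tilde Z$, and then descends that map along the torsor to get the required map $Z \ra (V{\underset{V_a^{\bs(a)}}\times}U_a^{\bs(a)}){\underset{\rmG_a}\times}\cX_a$. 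You instead isolate the standard fact that the associated bundle $U_a^{\bs(a)}{\underset{\rmG_a}\times}\cX_a \ra V_a^{\bs(a)}$ pulls back to the trivial bundle $U_a^{\bs(a)}\times\cX_a$ over the total space $U_a^{\bs(a)}$ of the torsor, then base-change that square along $V \ra V_a^{\bs(a)}$ and invoke stability of $\rmG_a$-torsors under base change to recognize $V{\underset{V_a^{\bs(a)}}\times}(U_a^{\bs(a)}{\underset{\rmG_a}\times}\cX_a)$ as the quotient of $(V{\underset{V_a^{\bs(a)}}\times}U_a^{\bs(a)})\times\cX_a$. Both arguments rest on the same pillars (freeness of the action, existence of the quotient as an algebraic space, and faithfully flat descent along the torsor), but yours buys a reusable intermediate statement --- the preliminary cartesian square, i.e.\ that formation of the balanced product commutes with arbitrary base change on $V_a^{\bs(a)}$ --- while the paper's is a self-contained universal-property check that avoids having to name that intermediate fact. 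The one step you should make sure is airtight is the identification of the left column of the pulled-back square as the base change of the torsor $U_a^{\bs(a)}\times\cX_a \ra U_a^{\bs(a)}{\underset{\rmG_a}\times}\cX_a$ along $V{\underset{V_a^{\bs(a)}}\times}(U_a^{\bs(a)}{\underset{\rmG_a}\times}\cX_a) \ra U_a^{\bs(a)}{\underset{\rmG_a}\times}\cX_a$; once that is said explicitly, the conclusion is immediate and your appeal to descent does the rest.
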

\begin{proof} Let $Z$ denote a scheme with compatible maps $\alpha: Z \ra V$, $\beta:Z \ra U_a^{\bs(a)}{\underset {\rmG _a} \times}\cX_a$
 and $\gamma: Z \ra V_a^{\bs(a)}$. Since the $\rmG _a$-action on $U_a^{\bs(a)}$ is free one may  view 
$U_a^{\bs(a)}{\underset {\rmG _a} \times}\cX_a $ as the 
quotient stack $[(U_a^{\bs(a)} \times \cX_a)/{\rmG _a}]$
one observes that the given map $\beta : Z \ra [(U_a^{\bs(a)} \times \cX_a)/{\rmG _a}]$ corresponds to giving a 
principal $\rmG$-bundle
$z:\tilde Z \ra Z$ together with a $\rmG _a$-equivariant map $\phi: \tilde Z \ra U_a^{\bs(a)} \times \cX_a$. Now 
the map $z$
together with the given maps of $Z$ provide compatible $\rmG_a$-equivariant maps $\tilde \alpha = 
\alpha \circ z: \tilde Z \ra V$,
$\tilde \gamma = \gamma \circ z: \tilde Z \ra V_a^{\bs(a)}$ and $\tilde \beta = \phi: \tilde Z \ra 
U_a^{\bs(a)} \times \cX_a$.
Since the square
\vskip .3cm
\xymatrix{{V{\underset {V_a^{\bs(a)}} \times}U_a^{\bs(a)} \times \cX_a} \ar@<1ex>[r] \ar@<-1ex>[d] & {V} \ar@<-1ex>[d]\\
{U_a^{\bs(a)}\times \cX_a} \ar@<1ex>[r] & {V_a^{\bs(a)}}}
\vskip .3cm \noindent
is evidently a cartesian square of $\rmG _a$-schemes, one obtains an induced $\rmG _a$-equivariant map 
$\tilde \phi:\tilde Z \ra {V{\underset {V_a^{\bs(a)}} \times}U_a^{\bs(a)} \times \cX_a}$. Since 
${V{\underset {V_a^{\bs(a)}} \times}U_a^{\bs(a)} \times \cX_a} \ra {V{\underset {V_a^{\bs(a)}} \times}
U_a^{\bs(a)} {\underset {\rmG _a}\times}\cX_a}$ is a 
principal $\rmG _a$-bundle, the map $\tilde \phi$ descends to define a map 
$Z \ra {V{\underset {V_a^{\bs(a)}} \times}U_a^{\bs(a)} {\underset {\rmG _a} \times}\cX_a} $, thereby proving the lemma.
\end{proof}

\vskip .3cm
\begin{theorem}
\label{main.thm1.1}
Assume in addition to the above situation that the spectrum $E$ is one of the spectra as in Theorem ~\ref{main.thm.1}, i.e.
$E=E'/{\ell}$, ${E'}{\overset L {\underset {\Sigma} \wedge}}H(Z/{\ell})$ or ${\overset m {\overbrace {{{{E'} {\overset L {\underset {\Sigma} \wedge}} H(Z/{\ell})}} \cdots {{{E'} {\overset L {\underset {\Sigma} \wedge}} H(Z/{\ell})}} 
}}}$ for some ring spectrum $E' \eps \Spt_{\rmS^1}(k)$. 
 Then the corresponding induced maps of the stalks
\vskip .3cm
$R\pi^{\bs}_{*, x}\Phi(\{E_a|a \eps \A\}) \ra R\pi^{\bs}_{*, x}\Phi(\{E_a|a \eps \A\})$
\vskip .3cm \noindent
is a weak-equivalence. It follows therefore, that the induced map
\vskip .3cm
$\colimA \H_{G_a}(U_a^{\bs(a)}{\underset {G_a} \times} (\oSpec \, k), E) \ra \colimA \H_{G_a}(U_a^{\bs(a)}{\underset {G_a} \times} (\oSpec \, F_a), E)$
\vskip .3cm \noindent
is a weak-equivalence for each fixed sequence  $\bs$.  Therefore, one also obtains a weak-equivalence
on taking the homotopy inverse limit over all $\bs$ of the above colimits. i.e. The induced map 
\be \begin{multline}
     \begin{split}
\H_{{\rm G}_{\oF}}(\oSpec \, k \times \bY,  E)  = \holims \colima \H_{\rmG _a}(U_a^{\bs(a)}{\underset {G_a} \times}(\oSpec \, k), E) \\ 
\ra \holims \colima \H_{\rmG_a}(U_a^{\bs(a)}{\underset {G_a} \times}(\oSpec \, F_a), E) =\H_{{\rm G}_{\oF}}(\oSpec \, \bar F \times \bY, E)
\end{split}
\end{multline} \ee
\vskip .3cm \noindent
 is a weak-equivalence, where $\bY = \{\cY_a| a \eps \A\}$. In particular, taking $\bY= \oSpec \, k$, one obtains the weak-equivalence:
$$\H_{{\rm G}_{\oF}}(\oSpec \, k ,  E)   \ra \H_{{\rm G}_{\oF}}(\oSpec \, \bar F, E).$$
\end{theorem}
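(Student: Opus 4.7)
The plan is to follow precisely the strategy announced just before the theorem: reduce the required weak-equivalence to a local (stalkwise) statement on the classifying scheme $V_a^{\bs(a)}$, and then invoke non-equivariant rigidity on the resulting Henselian local schemes. First, I would use the Leray-type identification \eqref{Leray.1}, together with its pro-version obtained by assembling over $\A$, to rewrite both sides of the asserted weak-equivalence as Nisnevich hypercohomology of the higher direct images $R\pi^{\bs}_*\Phi(\{E_a\})$ on the classifying pro-scheme $\{V_a^{\bs(a)}\}$. The key point is that the commutative square \eqref{compat.maps} has the identity along the bottom row, so the map on equivariant hypercohomology induced by $\bZ \to \bX$ becomes a map of Nisnevich sheaves on the \emph{same} base. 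Hence it suffices to check the map is a local weak-equivalence, i.e.\ a weak-equivalence on stalks at every point $x = \{x_a|a \in \A\}$.

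Second, I would exploit the stalk decomposition \eqref{stalk.decomp.1}, which rests on the fact that $\rmG_a$ acts freely on $U_a^{\bs(a)}$ and that $\rmG_a$ is finite, so that the pullback of $U_a^{\bs(a)} \to V_a^{\bs(a)}$ to $\oSpec(\O^{h}_{V_a^{\bs(a)},x_a})$ is a finite disjoint union of copies of a single Henselian local scheme $\oSpec B_a$, \emph{independent of the chosen $\rmG_a$-scheme}. Combined with Proposition~\ref{Phi.preserves.fibrations} (which commutes $\Phi$ with $R\pi_*$) and the reformulation \eqref{stalk.ident.2}, the map of stalks becomes
\[
\colimA \H(\oSpec B_a \times_{\oSpec k} \oSpec k,\, E) \;\longrightarrow\; \colimA \H(\oSpec B_a \times_{\oSpec k} \oSpec F_a,\, E).
\]
At this point both sides are purely non-equivariant hypercohomology spectra of smooth schemes over Henselian local rings, and the map is induced by the inclusion $\ok \hookrightarrow F_a$.

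Third, I would invoke Proposition~\ref{ext.rigidity}, which is the non-equivariant rigidity statement for the specific $\ell$-primary-torsion spectra appearing in the hypothesis ($E'/\ell$, $E' \wedge^{L}_{\Sigma} H(\Z/\ell)$, or their iterated smash products). The hypotheses in \S\ref{spectra.props} --- in particular homotopy invariance, Nisnevich excision, Zariski localization, and invariance under purely inseparable maps --- are preserved by passing from $E'$ to these $\ell$-torsion variants (Lemma~\ref{smah.preservers.good.props}), so a Suslin/Yagunov-style rigidity theorem applies fiberwise over each Henselian $\oSpec B_a$. The remark in \eqref{field.simplifications} that the algebraic closure differs from the separable closure only by purely inseparable extensions is essential here; it is what allows the purely inseparable invariance to dispose of the inseparable part of each $F_a$.

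Finally, once the map on stalks is a weak-equivalence, the map of sheaves $R\pi^{\bs}_*\Phi(\{E_a\}) \to R\pi^{\bs}_*\Phi(\{E'_a\})$ is a local weak-equivalence on the Nisnevich site, so the induced map on Nisnevich hypercohomology of the pro-scheme $\{V_a^{\bs(a)}\}$ is a weak-equivalence for each fixed $\bs$; taking $\holims$ and noting that homotopy inverse limits preserve weak-equivalences of spectra gives the final stated weak-equivalence, with the case $\bY = \oSpec k$ falling out as a specialization. The main obstacle, and the reason the argument is not completely formal, is the interaction between the cofiltered colimit $\colima$ and the rigidity statement: one must ensure that the Henselian local rings $\oSpec B_a$ fit into a compatible system indexed by $\A$ under which the non-equivariant rigidity map is natural enough to survive passage to the colimit. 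This is precisely what Propositions~\ref{ext.rigidity} and~\ref{Phi.preserves.fibrations} are designed to handle, and the delicate point is ensuring that the descent from the principal $\rmG_a$-bundle structure to the local decomposition \eqref{stalk.decomp.1} is functorial in $a \in \A$.
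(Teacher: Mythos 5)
Your proposal follows essentially the same route as the paper's proof: reduce to a stalkwise statement via the Leray-type identification and the square \eqref{compat.maps}, use the decomposition \eqref{stalk.decomp.1} to identify the stalks as $\colima \H(\oSpec B_a \times_{\oSpec k} \cX_a, E)$ with $\oSpec B_a$ independent of $\cX_a$, apply the non-equivariant rigidity of Proposition~\ref{ext.rigidity} (for fixed $a$, taking the colimit over the copy $\B$ of $\A$ first, as in \eqref{stalk.ident.2}), and then globalize via Proposition~\ref{Phi.preserves.fibrations} and $\holims$. This matches the paper's argument in both structure and the key lemmas invoked.
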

\begin{proof}
 The first statement of the theorem, providing the weak-equivalence of the stalks follows readily from the identification in
~\eqref{stalk.ident.2}, the observation that the choice of $\oSpec \, B_a$ depends only on the $U^{\bs}_a$ and Proposition ~\ref{ext.rigidity} applied with the scheme
$W$ denoting $\oSpec \, B_a$ for a fixed $a$. This will first show that the induced maps of the term in ~\eqref{stalk.ident.2} will be a weak-equivalence on taking the colimit
over $\B$ for each fixed $a \eps \B$. Then one may also take the colimit over $\A$ to obtain a weak-equivalence of the corresponding stalks.
\vskip .3cm
The second statement then follows readily from Proposition ~\ref{Phi.preserves.fibrations}. The third statement then follows
from the second by taking the homotopy inverse limit $\holims$. Finally the last statement follows from this
by observing from Definition ~\ref{def.Borel.coh.pro} that now $U_a^{\bs(a)} = \Pi_{b \le a}E\frG_b^{s(b)}$ so that 
\vskip .3cm
$\H_{{\rm G}_{\oF}}(\oSpec \, k, E) =  \holims \colimA \H_{G_a}(U_a^{\bs(a)}{\underset {G_a} \times} (\oSpec \, k), E)$ \ and that
\vskip .3cm
$\H_{{\rm G}_{\oF}}(\oSpec \, \bar F, E)= \holims  \colimA \H_{G_a}(U_a^{\bs(a)}{\underset {G_a} \times} (\oSpec \, F_a), E)$.
\end{proof}
\vskip .3cm
Observe that the last theorem proves Theorem ~\ref{main.thm.1} when the spectrum $E= E/l=E \wedge M(l)$.
We proceed to consider the other situations considered in the theorem. When the spectrum $E$ has been replaced by 
$E{\overset L {\underset {\Sigma} \wedge}}\H(\Z/{\ell})$ the conclusion follows as in the last case since the homotopy groups of this spectrum are all 
$l$-primary torsion. It follows that the same conclusion holds when the spectrum $E$ has been replaced by the iterated derived smash product:
$E{\overset L {\underset {\Sigma} \wedge}}\H(\Z/{\ell}) {\overset L {\underset {\Sigma} \wedge}}\H(\Z/{\ell}) \cdots {\overset L {\underset {\Sigma} \wedge}}\H(\Z/{\ell}) $.
 This completes the proof of Theorem ~\ref{main.thm.1}.
\vskip .3cm

\subsection{Rigidity for spectra (after Yagunov): see \cite{Yag}}
\begin{proposition}
\label{ext.rigidity}
(i) Let $E$ denote a spectrum and let $X \mapsto h(X, E)= \Map(\Sigma _{{\rmS}^1}X_+, E)$ denote the generalized cohomology
theory defined with respect to the spectrum $E$. If $W$ is a fixed smooth scheme of finite type over $\ok$, then the functor $X \mapsto 
h(X \times  W, E) = \Map(\Sigma _{{\rmS}^1}X_+ \wedge W_+, E)$ is represented by the ${\rmS}^1$-spectrum $\Map(\Sigma _{{\rmS}^1}W_+, E)$.
This spectrum also satisfies all the properties in ~\ref{spectra.props} except possibly for Nisnevich excision and the last two properties.
In addition, if $l. h^*(X, E) =0$ for all smooth schemes $X$ of finite type over $\ok$, then $l. h^*(X \times W, E) =0$ also. (Here
$h^*$ denotes the homotopy groups of the spectrum $h(\quad, E)$  re-indexed in the obvious manner.)
\vskip .3cm
(ii) Suppose $E$ is a spectrum so that $l.h^*(X, E) =0$ for all smooth schemes $X$ of finite type over $\ok$. Then if $W$ is any smooth
scheme of finite type over $\ok$, the obvious map $id \times i:W {\underset {\oSpec \, k} \times} \oSpec \, \bar F \ra W {\underset {\oSpec \, k} \times} \oSpec \, k$
(induced by the given map $i:\oSpec \, \bar F \ra \oSpec \, k$) induces an isomorphism
\vskip .3cm
$h^*(W{\underset {\oSpec \, k} \times} \oSpec \, k, E) \ra h^*(W {\underset {\oSpec \, k} \times} \oSpec \, \bar F, E)$
\end{proposition}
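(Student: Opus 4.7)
The plan is to reduce the rigidity statement in (ii) to the non-equivariant rigidity theorem of Yagunov applied to an auxiliary spectrum, and to handle (i) by a standard internal-hom adjunction combined with verifications that the listed properties are preserved.

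For part (i), I would define the auxiliary spectrum $E^W := \underline{\Map}(\Sigma_{S^1}W_+, E)$, the internal mapping spectrum in the category of presheaves of $S^1$-spectra on $(\Sm/\ok)_{Nis}$. The adjunction between smash product and internal hom immediately yields
\[
\Map(\Sigma_{S^1}X_+ \wedge W_+, E) \;\simeq\; \Map(\Sigma_{S^1}X_+, E^W),
\]
which is the representability claim. Crucially, for any closed $Y \subseteq X$, unpacking definitions gives $\Gamma_Y(X, E^W) \simeq \Gamma_{Y\times W}(X\times W, E)$, because the support condition only constrains the $X$-factor while the $W$-factor is fixed. Properties (i), (ii), (iii), (v), (vi), (vii) of \ref{spectra.props} for $E^W$ then follow formally from the corresponding property of $E$ applied to the "scaled" pair $(X \times W, Y \times W)$. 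For homotopy purity one uses that the deformation space construction $B(X,Y)$ commutes with products in the sense that $B(X\times W, Y\times W) \cong B(X,Y)\times W$, reducing purity for $E^W$ to purity for $E$. The $l$-torsion assertion is automatic: if $X$ is smooth of finite type over $\ok$ then so is $X\times W$, hence $l\cdot h^*(X\times W, E) = 0$ by hypothesis.

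For part (ii), I would apply the rigidity theorem of Yagunov \cite{Yag} (the exact form noted in the remark following \ref{spectra.props}) to the spectrum $E^W$. By part (i), $E^W$ satisfies every property needed by that theorem, and $h^*(-, E^W)$ is $l$-primary torsion on smooth $\ok$-schemes. Yagunov's theorem then yields that, for any extension $\ok \hookrightarrow L$ of algebraically closed fields, the pullback $h^*(\oSpec\,\ok, E^W) \to h^*(\oSpec\, L, E^W)$ is an isomorphism. Since $\bar \oF$ is algebraically closed and contains $\ok$, taking $L = \bar \oF$ and translating back via the adjunction gives
\[
h^*(W, E) \;\xrightarrow{\;\simeq\;}\; h^*(W\times_{\oSpec\,\ok} \oSpec\,\bar \oF, E),
\]
which is the assertion.

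The only subtlety is that $\oSpec\,\bar \oF$ is not of finite type over $\ok$, so applying rigidity at this point requires passing to a limit: write $\bar \oF = \colim_\alpha A_\alpha$ as a filtered colimit of smooth finitely generated $\ok$-subalgebras (possible since $\ok$ is algebraically closed and $\bar \oF$ has finite transcendence degree after the reductions in \ref{field.simplifications}), apply rigidity to each finite stage $\oSpec\, A_\alpha$ (comparing to $\oSpec\,\ok$ via a $\ok$-rational point and a Hensel-local specialization), and then invoke property (viii) of \ref{spectra.props} for $E$ to identify $h^*(\oSpec\,\bar\oF\times W, E)$ with $\colim_\alpha h^*(\oSpec\, A_\alpha \times W, E)$. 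The main obstacle is the verification of homotopy purity for $E^W$ and the careful passage to the limit; once these are in hand, the theorem reduces to an application of Yagunov's non-equivariant rigidity.
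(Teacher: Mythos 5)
Your proposal is correct and follows essentially the same route as the paper: part (i) is handled by representing $X\mapsto h(X\times W,E)$ via the mapping spectrum $\Map(\Sigma_{S^1}W_+,E)$ and checking the required axioms pairwise on $(X\times W, Y\times W)$, with the key observation $B(X,Y)\times W\cong B(X\times W, Y\times W)$ for homotopy purity, and part (ii) is then an application of Yagunov's rigidity theorem \cite[Theorem 1.10]{Yag}. Your closing remarks on passing to the filtered colimit over finitely generated subalgebras of $\bar\oF$ spell out details that the paper leaves to the citation of Yagunov, but introduce nothing different in substance.
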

\begin{proof} (i) is more or less obvious.  However, we will provide some details for the convenience of the reader. Since $X \mapsto h(X \times W, E)$ is also a cohomology theory, it suffices to verify that it extends
 to define a cohomology theory for pairs of smooth schemes $(X, Y)$ with $Y$ locally closed in $X$ satisfying the properties discussed
in \cite[Definition 1.2]{Yag}. One may extend such a generalized cohomology theory to such pairs $(X, Y)$
by defining $h_{Y \times W}(X \times W, E)$ to be the homotopy fiber of $h(X \times W, E) \ra h(Y \times W, E)$.
If $X$ and $Y$ are both smooth schemes of finite type over $\ok$ and $Y$ is locally closed in $X$, we call $(X, Y)$ 
a smooth pair.
Then one needs to check this extension to smooth pairs  satisfies the following:
\begin{itemize}
 \item {Suspension}: if $(X, Y)$ is a smooth pair, then $h_{Y\times W\times 0}(X \times W \times \bA^1, E) \simeq 
h_{Y \times W}(X \times W, E)$
\item{Zariski excision}: if $Z \subseteq X_0 \subseteq X$ are smooth schemes with $Z$ closed in $X_0$ and $X_0$ open in $X$,
then, one obtains a weak-equivalence: $h_{Z\times W}(X_0 \times W, E) \simeq h_{Z \times W}(X \times W, E)$
\item{Homotopy invariance}: If $(X, Y)$ is a smooth pair, then $h_{Y \times W \times \bA^1}(X \times W \times \bA^1) \simeq
h_{Y \times W}(X \times W)$.
\item{Homotopy Purity}: If $Z \subseteq Y \subseteq X$ are closed immersions of smooth schemes, $\N$ is the 
normal bundle to the immersion of $Y$ in $X$ and $B(X, Y)$ denotes the deformation space obtained by 
blowing up $X\times \bA^1$ along $Y\times 0$ and removing the divisor $P(\N)$. Then one obtains weak-equivalences:
$h_{Z\times W}(\N \times W) \simeq h_{Z \times W \times \bA^1}(B(X, Y) \times W) \simeq h_{Z \times W}(X \times W)$.
\end{itemize}
\vskip .3cm
Of these, the first three properties follow readily from the fact they hold for the generalized cohomology theory represented by the spectrum $E$.
 The last property may be verified
using the observation that $B(X, Y) \times W \cong B(X \times W, Y \times W)$ and the homotopy purity theorem
proven in \cite[Theorem 2.23, p. 115]{MV}. 
\vskip .3cm
Now the statement (ii) follows from \cite[Theorem 1.10]{Yag} since the statement in (i) verifies that all the hypotheses in \cite[Theorem 1.10]{Yag}
are satisfied.
\end{proof}
\subsection{Sites for pro-schemes}
Let $\Sch (S)$ denote the category of schemes of finite type over the fixed Noetherian base scheme $S$. Observe that each scheme 
$X \eps \Sch (S)$
 is automatically quasi-compact. We will let $\A$ denote a partially ordered directed set which is cofinite as in 
~\ref{compat.inverse.systems}, i.e.  
for each fixed $a \eps \A$, the set of elements $b \eps \A$ so that $b \le a$ is finite and the relation $\le$ is anti-symmetric. 
In particular, this means 
that there is at most one map between any two objects in $\A$.
Then a pro-object in
$\Sch (S)$ will mean an inverse system of schemes $\{X_a|a \eps \A\}$. Observe that, there is at most one structure map between any two 
$X_a$ and $X_b$ that are part of the inverse system.
\vskip .3cm
A level representation of a map $f: \X \ra \Y$ of pro-schemes will mean a choice of isomorphisms $\X \cong \{X_a|a \eps \A\}$ and
$Y \cong \{Y_a|a \eps \A\}$ of pro-schemes together with a compatible collection of maps $\{f_a:X_a \ra Y_a|a \eps \A\}$ which
represents $f$. Recall (see \cite[Appendix 3.2]{AM}) any finite diagram of maps of pro-schemes with no loops may be replaced upto isomorphism by
a diagram of level-maps.  Therefore, when considering such finite diagrams of
pro-objects, we will always consider level maps.
\vskip .3cm
One begins with a small site on $\Sch (S)$, which we will denote by $\Top$. 
The objects of the corresponding small site for a pro-object $\X = \{X_a| a \eps \A\}$
will be pro-objects $\U = \{U_a| a \eps \A\}$ provided with a level-map $\U \ra \X$ of pro-objects, so that for each $a \eps \A$, the map 
$U_a \ra X_a$
belongs to the given site $\Top(X_a)$.  Morphisms between two objects $\U =\{U_a|a \eps \A\} \ra \V= \{V_a | a \eps \A\}$ will be obvious commutative
triangles of level maps of pro-objects over $\X$. A covering of a pro-object $\X$ will be a collection $\{\U^{\alpha}|\alpha \}$ of 
objects in the
site over $\X$ so that for each fixed $a \eps \A$, $\{U^{\alpha}_a| \alpha\}$ will be a cover of $X_a$.
One may contrast this with $\{\Top(X_a)|a \eps \A\}$ which is a fibered site, fibered over $\A$. 
\vskip .3cm

 It is clear that if  $f: \X= \{X_a|a \eps \A\} \ra 
\Y = \{Y_a|a \eps \A\}$ is a level-map, it  induces a morphism of sites:
$f_*: \Top(\X ) \ra \Top(\Y)$ where $f_*(\U) = \{U_a{\underset {Y_a} \times}X_a | a \eps \A\}$, where $\U = \{U_a|a \eps \A\}$.
\vskip .3cm
Let $a_0 \eps I$ denote fixed index and let $()_{a_0}$ denote the functor sending an object $\X=\{X_a | a \eps \A\}$ to $X_{a_0}$. 
Moreover, the restriction functor $()_{a_0}:\Top (\X) \ra \Top (X_{a_0})$ defines a map of sites $\Top (X_{a_0}) \ra
 \Top (\X)$ sending 
$\U =\{U_a|a \eps \A\} \mapsto U_{a_0}$. 
Henceforth a presheaf (sheaf) will generically denote a presheaf (sheaf) of pointed sets (pointed simplicial sets, abelian groups
or rings etc.)
Next observe that a presheaf (sheaf) on the fibered site $\{\Top(X_a)|a \eps \A\}$ is given by a collection of
 presheaves (sheaves)
$\{P_a \mbox{ on } \Top (X_a)|a \eps \A\}$  together with compatibility data for the structure maps of the inverse 
system. 
Let $\Psh (\X)$ ($\Sh (\X)$) denote the corresponding category of presheaves (sheaves, \res) of pointed sets on $\Top (\X)$. 
We let $\{\Psh(X_a)|a \eps \A\}$ denote the category of presheaves of pointed sets on the fibered site $\{\Top(X_a)|a \eps \A\}$.
Then we define
a functor
\vskip .3cm
$\Phi: \{\Psh (X_a)|a \eps \A\} \ra \Psh (\X)$
\vskip .3cm \noindent
as follows. Let $F = \{F_a| a \eps \A \} \eps \{\Psh(X_a)|a \eps \A\}$. Then
\be \begin{equation}
   \Gamma (\U, \Phi(F)) = \colima \Gamma (U_a, F_a) = \colima \Gamma (\U, ()_{a*}(F_a)) 
\end{equation} \ee
One may now observe that $\Phi(F) = \colimi ()_{a*}(F_a)$, where $F= \{F_a|a \eps \A\}$.
\vskip .3cm
\begin{definition} 
\label{points}
A point for the site $\Top(\X)$ will denote a compatible collection of points $\p=\{p_a|a \eps \A\}$ with $p_a$ being
 a point of the site $\Top(X_a)$. (Recall this means if $\lambda_{a, b}: X_a \ra X_b$ is the structure map of $\X$, then $\lambda _{a, b}\circ \p_a = \p_b$,
for every $a$ and $b$.)
\end{definition}
\begin{proposition}
 \label{nbds} Let $\p =\{p_a|a \eps \A\}$ denote a point of the site $Top(\X)$ and let 
 let $U_{a_0}$ denote a neighborhood of $p_{a_0}$ in 
$\Top(X_{a_0})$, for a fixed $a_0 \eps \A$. Then there exists a neighborhood $\U$ of $\p$ in $Top(\X)$ so that the map $\U_{a_0} \ra X_{a_0}$ factors
 through the given map $U_{a_0} \ra X_{a_0}$.
\end{proposition}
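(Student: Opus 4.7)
The plan is to construct $\U$ by pulling the given neighborhood $U_{a_0}$ back along the structure maps of $\X$. Since $\A$ is a directed poset, the subset $\A_{\geq a_0} := \{a \in \A : a \geq a_0\}$ is cofinal in $\A$: given any $b \in \A$, directedness supplies $c \in \A$ with $c \geq b$ and $c \geq a_0$. Hence $\X$ is isomorphic in the pro-category to its restriction to $\A_{\geq a_0}$, and it suffices to construct $\U$ over this cofinal subsystem. For every $a \in \A_{\geq a_0}$ there is a structure map $\lambda_{a,a_0} : X_a \to X_{a_0}$, with $\lambda_{a_0,a_0} = \mathrm{id}$.

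For each such $a$ I would set
\[ U_a := U_{a_0} \times_{X_{a_0}} X_a, \]
with the canonical projection $U_a \to X_a$. Since the Nisnevich (resp.\ \'etale, resp.\ Zariski) topology is stable under base change, $U_a \to X_a$ lies in $\Top(X_a)$. For $a \geq b \geq a_0$, the universal property of the fibre product yields a unique map $U_a \to U_b$ compatible with $\lambda_{a,b} : X_a \to X_b$, so $\U := \{U_a \mid a \in \A_{\geq a_0}\}$ is a pro-scheme and $\U \to \X$ is a level-map, hence an object of $\Top(\X)$. By construction $\U_{a_0} = U_{a_0} \times_{X_{a_0}} X_{a_0} = U_{a_0}$, and the map $\U_{a_0} \to X_{a_0}$ agrees with the given $U_{a_0} \to X_{a_0}$, which provides the required factorization (via the identity).

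It remains to verify that $\U$ is a neighborhood of $\p$, i.e.\ to produce a compatible system of lifts $\tilde p_a$ of $p_a$ to $U_a$. A lift $\tilde p_{a_0}$ of $p_{a_0}$ to $U_{a_0}$ is given by hypothesis. For each $a \in \A_{\geq a_0}$, the compatibility $\lambda_{a,a_0} \circ p_a = p_{a_0}$ together with $\tilde p_{a_0}$ and the universal property of $U_a = U_{a_0} \times_{X_{a_0}} X_a$ produces a unique lift $\tilde p_a$ of $p_a$ to $U_a$ whose image in $U_{a_0}$ is $\tilde p_{a_0}$. Uniqueness forces the $\tilde p_a$ to be compatible with the structure maps $U_a \to U_b$. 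There is no substantive obstacle in the argument: the only ingredients are cofinality of $\A_{\geq a_0}$ in the directed set $\A$, stability of the site under pullback, and the universal property of fibre products.
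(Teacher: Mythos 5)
Your central device --- propagating $U_{a_0}$ by base change along the structure maps $X_a \ra X_{a_0}$ for $a \ge a_0$, and lifting the points via the universal property of the fibre product --- is sound, and it is in fact the same mechanism that drives the paper's argument. The one genuine gap is the appeal to cofinality. As the site $\Top(\X)$ is defined in this paper, an object is a pro-object indexed by the \emph{same} directed set $\A$ together with a level map to $\X$, and a neighborhood of $\p$ must be such an object through which the entire compatible family $\{p_a \mid a \eps \A\}$ lifts. Your system $\{U_a \mid a \ge a_0\}$ is therefore not literally a neighborhood of $\p$ in $\Top(\X)$: the pro-isomorphism $\X \cong \X|_{\A_{\ge a_0}}$ does not change the site, which is attached to the fixed presentation by $\A$ (and the downstream use of this proposition, the stalk computation in Corollary \ref{stalk.ident}, is likewise a colimit over all of $\A$). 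The fix is short and you should include it: set $U_a = X_a$ for every $a$ with $a \not\ge a_0$, and define the structure map $U_a \ra U_b$ for $b \le a$ by cases --- the induced map of pullbacks when both indices dominate $a_0$; the composite $U_a \ra X_a \ra X_b = U_b$ when $a \ge a_0$ but $b \not\ge a_0$; and $\lambda_{a,b}: X_a \ra X_b$ otherwise (note $b \le a$ and $a \not\ge a_0$ force $b \not\ge a_0$). Compatibility of compositions follows because everything commutes over $\X$, and the lifts at the new indices are just $\tilde p_a = p_a$.

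With that patch in place, your proof is complete, and it is genuinely simpler than the one in the paper. The paper constructs $U_a$ at \emph{every} index by enumerating all descending paths out of each vertex, pulling back the chosen neighborhoods along those paths that reach descendants of $a_0$, inserting $X_b$ on the remaining paths, and taking finite fibre products over all paths at each vertex; this is where the cofiniteness hypothesis on $\A$ (each $a$ has only finitely many descendants) is actually used, to keep those fibre products finite. Your route needs only the directedness of $\A$ (to see that $\A_{\ge a_0}$ is cofinal, which after the patch above is only used implicitly) and one pullback per index, at the price of the explicit case division for indices not dominating $a_0$. Both arguments use the same two ingredients --- stability of the topology under base change and the universal property of the fibre product --- so this is a streamlining of the paper's proof rather than a different proof.
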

\begin{proof} We construct $\U$ in steps.
 \vskip .3cm
{\it Step 1}. Recall that $a_0$ has only finitely many descendants. A string $b_n \le b_{n-1} \le \cdots \le b_1 \le b_0=a_0$
of descendants of $a_0$ will be called a {\it path} of length $n$ starting at $a_0$ and ending at $b_n$. For each such path, one starts 
with a neighborhood of $p_{b_n}$ and takes iterated pull-backs to define neighborhoods of $p_{b_i}$, $i=n, n-1, \cdots, 1, 0$.
For each vertex $b$ on such a path, take the fibered product over $X_b$ of all such neighborhoods of $p_b$  obtained from varying
paths starting at $a_0$ and passing through descendants of $a_0$. 
Finally take the fibered product over $X_{a_0}$ of such a neighborhoods of $p_{a_0}$ obtained this way with 
 the given neighborhood $U_{a_0}$. At this point, we will replace the original neighborhood $U_{a_0}$ by this neighborhood: clearly
we have now a compatible system of neighborhoods for $p_{a_0}$ and all the $\{p_b| b \le a_0\}$. 
\vskip .3cm
{\it Step 2}. Let $a' \eps \A$, $a' \ne a_0$. If $a'$ is a descendant of $a_0$, we have already constructed a neighborhood of $p_{a'}$
compatible with the neighborhood $U_{a_0}$. Therefore, without loss of generality, we may assume $a'$ is {\it not} a descendant of $a_0$.
Since $a'$ has only finitely many descendants, there are only finitely many paths that start at $a'$ and passing through its
descendants. 
\vskip .3cm
For each path that starts at $a'$ and ends at a descendant $d_{a_0}$ of $a_0$, one may take iterated pull-back of the 
already chosen neighborhood $U_{d_{a_0}}$ to define neighborhoods of every $p_b$, for every vertex $b$ on such a path. 
For each path that starts at $a'$ and ends at a vertex $v$ that is {\it not} a descendant of $a_0$, one chooses $X_b$
as the neighborhood of each $p_b$, for a vertex $b$ on this path. 
\vskip .3cm
For each of the descendants $b$ of $a'$ (including $a'$) there will be only finitely many such paths that start at $a'$ and pass through $b$: 
now take the iterated fibered product over $X_b$ of all the neighborhoods of $p_b$ obtained this way by varying the 
above paths. This produces a neighborhood $U_{a'}$ of $p_{a'}$ which is compatible with  neighborhoods of $p_b$ for all the
descendants $b$ of $a'$, some of which may be also descendants of $a_0$. Observe that this construction keeps intact all the
already chosen neighborhoods of $p_b$, for $b$ which are descendants of $a_0$, chosen in step 1. Moreover if $a'$ is a descendant of
$a''$, then the system of neighborhoods constructed this way for paths starting at  $a''$ leaves intact the neighborhoods constructed 
for any $p_c$, where
$c$ is a descendant of $a'$. Therefore, the system of neighborhoods constructed this way for each $p_a$, defines a compatible 
system of neighborhoods $\U$ for the point $\p$. Since the structure map $\U_{a_0} \ra X_{a_0}$ factors through the given map 
$U_{a_0} \ra X_{a_0}$ one started out with,
the last statement in the proposition is clear.
\end{proof}
\begin{corollary}
 \label{stalk.ident}
 Let $F = \{F_a| a \eps \A \} \eps \{\Psh(X_a)|a \eps \A\}$ and let $\p =\{p_a|a \eps \A\}$ denote a compatible collection of
points of $\X = \{X_a|a \eps \A\}$. Then 
\be \begin{equation}
   \Phi(F)_{\p} = \colima (F_a)_{p_a} 
\end{equation} \ee
\end{corollary}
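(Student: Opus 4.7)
The plan is to unwind the stalk at $\p$ as a colimit of sections over neighborhoods, interchange the order of two filtered colimits, and then apply Proposition~\ref{nbds} to identify the inner colimit with the stalk of $F_a$ at $p_a$.

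First I would start from the expression $\Phi(F) = \colim_{a} ()_{a*}(F_a)$ given just before the statement. Since taking stalks commutes with arbitrary colimits of presheaves, this yields
\[
\Phi(F)_{\p} \;=\; \colim_{a}\,\bigl(()_{a*}(F_a)\bigr)_{\p} \;=\; \colim_{a}\;\colim_{\U \ni \p}\,\Gamma(U_{a},F_{a}),
\]
where the inner colimit runs over neighborhoods $\U = \{U_b \mid b \in \A\}$ of $\p$ in $\Top(\X)$. Thus everything reduces to proving that for each fixed $a$,
\[
\colim_{\U \ni \p}\,\Gamma(U_{a},F_{a}) \;\cong\; \colim_{V \ni p_a}\,\Gamma(V, F_a) \;=\; (F_a)_{p_a}.
\]

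Next I would establish this last identification by a cofinality argument. On one hand, given any neighborhood $\U = \{U_b\}$ of $\p$, the component $U_a$ is by construction a neighborhood of $p_a$, which defines a natural map $\colim_{\U} \Gamma(U_a, F_a) \to \colim_{V \ni p_a} \Gamma(V, F_a)$. For the reverse direction, I would invoke Proposition~\ref{nbds}: for any neighborhood $V$ of $p_a$ in $\Top(X_a)$, there exists a neighborhood $\U$ of $\p$ in $\Top(\X)$ whose $a$-component $U_a \to X_a$ factors through $V \to X_a$. This factorization yields a restriction map $\Gamma(V, F_a) \to \Gamma(U_a, F_a)$, and, passing to colimits, the two constructions are mutually inverse, so the two filtered colimits agree.

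Finally, combining the two steps gives
\[
\Phi(F)_{\p} \;=\; \colim_{a}\,(F_a)_{p_a},
\]
as desired. The only real content is the cofinality of the restriction functor $\U \mapsto U_a$ between neighborhood systems, and this is exactly what Proposition~\ref{nbds} was designed to supply; the interchange of the two filtered colimits is formal. The main obstacle would have been constructing, from a prescribed neighborhood of $p_a$ alone, a compatible family of neighborhoods of the remaining $p_b$ (for $b$ not below $a$) — but this obstacle has already been overcome in the proof of Proposition~\ref{nbds} by the stepwise construction along paths in $\A$, so here there is nothing further to do.
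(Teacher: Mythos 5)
Your proof is correct and follows the same route as the paper: the paper's own proof is the one-line remark that the corollary is clear from the last statement of Proposition~\ref{nbds}, which is precisely the cofinality input you isolate. Your write-up merely makes explicit the routine colimit interchange and the cofinality of $\U \mapsto U_a$ that the paper leaves implicit.
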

\begin{proof} This is clear in view of the last statement in the last Proposition. \end{proof}
\begin{remark}
 In several cases of interest the directed set $\A$ would be a product of $\NN$, where $\NN$ is the set of natural numbers. 
In this case therefore the inverse system of schemes $\{X_a|a \eps \A\}$ will reduce to a product of towers of schemes, where
each tower is indexed by $\NN$. In this case the construction in Proposition ~\ref{nbds} is particularly simple, as there is
a single path that starts at a given vertex and passes through its descendants.
\end{remark}
 
\vskip .3cm
\subsubsection{Model structures on simplicial presheaves}
The {\it local projective model structure} (as in \cite{Bl}) or the local flasque model structure (as in \cite{Isak}) will be the ones we
put on the topos of simplicial presheaves on the above site. We proceed to recall their definitions in a general context.
\vskip .3cm
Let $\C$ denote any essentially small site and let $\SPrsh (\C)$ denote the category of all pointed simplicial presheaves on $\C$. Then {\it the local projective
model structure} on $\SPrsh(\C)$ has the following structure: the {\it generating trivial cofibrations} will be those maps of the form 
$\Lambda [n] _+ \wedge U_+ \ra \Delta [n] _+ \wedge U_+$, $ U \eps \C$, $ n \ge 1$
where $\Lambda [n]$ denotes a {\it horn} of $\Delta[n]$. If $K$ is a simplicial set, $ K_+ \wedge U_+$ is the 
simplicial presheaf defined by $(K _+ \wedge U _+)_n = (\sqcup _{\alpha_n \eps K_n} U)_+$ with the structure maps induced from those of $K$. 
The {\it generating cofibrations} are those maps of the form: $\delta \Delta [n] _+\wedge U _+ \ra \Delta [n]_+ \wedge U_+$, with $U \eps \C$, $n \ge 1$. 
The {\it weak-equivalences} will be those maps $f:\oF' \ra \oF$ of simplicial presheaves that induce weak-equivalence stalk-wise and the {\it fibrations} are defined
by the right-lifting-property with respect to trivial cofibrations. Then this model category is a cofibrantly generated simplicial model category which is 
also proper and cellular.
\vskip .3cm
Next we discuss the {\it flasque} model structure. Let $X \eps \C$ and let $\U =\{U_{\alpha} \ra X| \alpha =1, \cdots, n\}$ denote a finite collection of monomorphisms in $\C$.
Then one defines $U \U = U_{\alpha=1}^n U_{\alpha}$ to denote the co-equalizer of the diagram:
$\sqcup_{\alpha, \beta} U_{\alpha}{\underset X \times} U_{\beta} \stackrel{\ra}{\ra} \sqcup_{\alpha} U_{\alpha}$ where the top row (bottom row) is
the projection to the first (second, \res) factor. Given a map $f: F \ra G$ in $\SPrsh(\C)$ and a map $g: K \ra L$ of pointed simplicial sets, 
the pushout-product $f \Box g$ is the map $G \wedge K \bigvee _{F \wedge K} F \wedge L \ra G \wedge L$. Then we define the {\it flasque} model
structure on $\SPrsh(\C)$ to have the following structure: the {\it generating trivial cofibrations} are of the form $f \Box g$, where 
$f: U\U \ra U$ in $\SPrsh(\C)$ and $g: \Lambda [n] \ra \Delta [n]$, $n \ge 1$ and the generating cofibrations are of the form $f \Box g$, where
$f$ is as above and $g: \delta \Delta [n] \ra \Delta [n]$, $n \ge 1$.  The weak-equivalences are those  maps $f:F' \ra \oF$ of simplicial presheaves that induce weak-equivalence stalk-wise and the {\it fibrations} are defined
by the right-lifting-property with respect to trivial cofibrations. Then this model category is a a cofibrantly generated simplicial model category which is 
also left-proper and cellular.
\vskip .3cm \noindent
The weak-equivalences in both of the above model structures will often be referred to as {\it local weak-equivalences}. In \cite{Bl} and \cite{Isak}, what is
considered is the unpointed case; but their results readily extend to the pointed case as shown in \cite[Proposition 1.1.8 and Lemma 2.1.21]{Hov-1}.
The main use of the above model structures is to be able to prove the following result, which is used in the body of the paper. 
\begin{proposition}
 \label{sect.wise.weq}
Suppose $f:E' \ra E$ is a local weak-equivalence between fibrant objects in $\SPrsh(\C)$ provided with any of the above model structures. Then
$\Gamma (U, f)$ is also a weak-equivalence for any $U \eps \C$. (Maps $f$ as above, for which $\Gamma (U, f)$ is a weak-equivalence for every $U \eps \C$
will be called section-wise weak-equivalences.)
\end{proposition}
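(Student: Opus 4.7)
The plan is to derive Proposition \ref{sect.wise.weq} from the standard fact in any simplicial model category that $\mathrm{Map}(A,-)$ sends weak-equivalences between fibrant objects to weak-equivalences of pointed simplicial sets whenever $A$ is cofibrant, combined with the Yoneda-type identification $\mathrm{Map}(U_+, F) \cong \Gamma(U, F)$. Accordingly, the argument has three short steps: verify cofibrancy of the pointed representables $U_+$; invoke Ken Brown's lemma to promote the local weak-equivalence between fibrant objects to a weak-equivalence of mapping spaces; and then identify those mapping spaces with the sections.

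For the first step, in the local projective structure the generating cofibrations are precisely the maps $\delta\Delta[n]_+ \wedge U_+ \to \Delta[n]_+ \wedge U_+$ for $n \ge 0$ and $U \in \mathcal{C}$; specializing to $n=0$ exhibits $\ast \to U_+$ as a generating cofibration, so each $U_+$ is cofibrant. In the flasque structure, forming the pushout-product of $\emptyset \to U$ (the trivial family of monomorphisms into $U$) with $\emptyset \to \Delta[0]$ yields $\ast \to U_+$ as a generating cofibration, whence $U_+$ is again cofibrant.

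For the second step, since $f: E' \to E$ is a local weak-equivalence, it is a weak-equivalence in the chosen model structure; and $E'$, $E$ are fibrant by hypothesis while $U_+$ is cofibrant. The simplicial model axiom SM7 thus ensures that $\mathrm{Map}(U_+, -)$ sends fibrations to fibrations and trivial fibrations to trivial fibrations; combined with Ken Brown's lemma applied to the full subcategory of fibrant objects, this forces $\mathrm{Map}(U_+, f)$ to be a weak-equivalence of pointed simplicial sets.

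For the third step, the tensor-hom adjunction of the simplicial enrichment, together with the identity $U_+ \wedge \Delta[n]_+ = (U \times \Delta[n])_+$ and the pointed Yoneda lemma, yields
\[\mathrm{Map}_{\SPrsh(\mathcal{C})}(U_+, F)_n \;=\; \operatorname{Hom}_{\SPrsh(\mathcal{C})}(U_+ \wedge \Delta[n]_+, F) \;=\; F(U)_n,\]
naturally in $F$, hence a natural isomorphism $\mathrm{Map}(U_+, F) \cong \Gamma(U, F)$ of pointed simplicial sets. Composing with the weak-equivalence from the previous step gives the claim. The only point requiring any care is the cofibrancy of $U_+$ in the flasque case, but since $\emptyset \to U$ is a legitimate (empty) family of monomorphisms this falls straight out of the definitions, so no substantive obstacle arises.
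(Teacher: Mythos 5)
Your proof is correct and follows essentially the same route as the paper's: both rest on the adjunction identifying $\Gamma(U,-)$ with $\mathrm{Map}(U_+,-)$, on the cofibrancy of $U_+$ (equivalently, that the boundary inclusions smashed with $U_+$ are cofibrations), on first treating the trivial-fibration case, and on Ken Brown's lemma for the general case. The only cosmetic differences are that you invoke SM7 abstractly where the paper writes out the corresponding lifting squares by hand, and that the paper indexes its generating cofibrations by $n\ge 1$ so the map $\ast\to U_+$ is not literally the $n=0$ generator there --- a harmless discrepancy, since the paper itself relies on $\ast_+\wedge U_+\to\delta\Delta[n]_+\wedge U_+$ being a cofibration, which amounts to the same cofibrancy fact you verify.
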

\begin{proof} We will first prove this result under the assumption that $f$ is also a fibration. Let $* = \Delta[0]$, let $n \ge 1$ be any integer and let $U \eps \C$. 
In this case we observe that it suffices to show that
 both the diagrams
\vskip .3cm
\xymatrix{{* } \ar@<1ex>[r] \ar@<1ex>[d] & {\Gamma (U, E')} \ar@<1ex>[d] && {\delta \Delta [n]} \ar@<1ex>[r] \ar@<1ex>[d] & {\Gamma (U, E')} \ar@<1ex>[d] \\
{\delta \Delta [n]} \ar@<1ex>[r] & {\Gamma (U, E)} &\mbox { and } &{\Delta [n]} \ar@<1ex>[r] & {\Gamma (U, E)}}
\vskip .3cm \noindent
have a lifting from the left-bottom corner to the right-top corner. To see this, observe that a class in $\pi_{n-1}(\Gamma (U, K)$ corresponds to a
map $\delta \Delta [n] \ra \Gamma (U, K)$ for any fibrant object in $K$ in $\SPrsh(\C)$. Therefore, the lifting in the first square shows that the
induced map $\pi_{n-1}(\Gamma (U, f)$ is surjective. A lifting in the second square then shows that the induced map $\pi_{n-1}(\Gamma (U, f)$ is 
injective as well. 
\vskip .3cm
Next assume that $f$ is a fibration as well. Then the two diagrams above correspond to the two diagrams:
\vskip .3cm
\xymatrix{{*_+ \wedge  U_+} \ar@<1ex>[r] \ar@<1ex>[d] & {E'} \ar@<1ex>[d] && {\delta \Delta [n] _+ \wedge U_+} \ar@<1ex>[r] \ar@<1ex>[d] & { E'} \ar@<1ex>[d] \\
{\delta \Delta [n]_+ \wedge U_+} \ar@<1ex>[r] & { E}  &\mbox { and } & {\Delta [n]_+ \wedge U_+ } \ar@<1ex>[r] & { E}}
\vskip .3cm \noindent
The required liftings then correspond to liftings in the above squares, i.e. maps from the left-bottom corner to the right-top corner. But these
exist since $f$ is a trivial fibration and both the maps $* _+ \wedge U_+ \ra \delta \Delta[n]_+ \wedge U_+$ and $ \delta \Delta [n]_+ \wedge U_+ \ra 
\Delta [n] _+ \wedge U_+$
are cofibrations both in the local projective model structure and the local flasque model structure.
\vskip .3cm
Finally we consider the general case where $f$ is required to be only a map that is a weak-equivalence and both $E'$ and $E$ are required to be fibrant in either
of the above model structures. This then follows by invoking what is called Ken Brown's lemma: see 
\cite[Lemma 1.1.12]{Hov-1}. We will provide some
details on this for the convenience of the reader. Let $f:E' \ra E$ be the given map.  Form the  product $E' \times E$ with the projection to the
 factor $E'$ ($E$) denoted $p_1$ ($p_2$, \res). The maps $f:E' \ra E$ and the identity $id:E' \ra E'$ induce a map $E' \ra E' \times E$. Factor this map
as the composition of a trivial cofibration $i$ followed by a fibration $q$. Then $p_1 \circ q \circ i =id_{E'}$ and $p_2 \circ q \circ i =f$:
so $p_1 \circ q$ and $p_2 \circ q$ are both weak-equivalences as well as fibrations. Therefore, by what is proven above, 
$\Gamma (U, p_1 \circ q)$ and $\Gamma (U, p_2 \circ q)$ are both weak-equivalences for any $ U \eps \C$. But $\Gamma (U, id) = 
\Gamma (U, p_1 \circ q \circ i) = \Gamma (U, p_1 \circ q) \circ \Gamma (U, i)$ so that $\Gamma (U, i)$ is a weak-equivalence. Therefore,
$\Gamma (U, f) = \Gamma (U, p_2 \circ q \circ i) = \Gamma (U, p_2 \circ q) \circ \Gamma (U, i)$ is also a weak-equivalence. This completes the proof 
of the proposition. 
\end{proof}
\begin{proposition} 
\label{Godement.is.fibrant}
Assume the site $\C$ has enough points. For an $F \eps \SPrsh(\C)$, let $\G F = \holimD \{G^nF|n\}$ denote the simplicial presheaf
 defined by the {\it Godement resolution} $\{G^nF|n \}$ which is a cosimplicial object of $\SPrsh(\C)$ augmented by $\oF$. Then the obvious maps
$\Gamma (U, \G F) \ra *$ is a fibration for any $U \eps \C$ and $\G \oF$ is fibrant in 
the local projective  model structure if $\oF$ is stalk-wise fibrant. Therefore, if $f:\oF' \ra \oF$ is a stalk-wise weak-equivalence in $\SPrsh(\C)$ between pointed simplicial presheaves 
that are stalk-wise fibrant, then $\Gamma (U, \G f)$ is a weak-equivalence for every $U \eps \C$.
\end{proposition}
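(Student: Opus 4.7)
The plan is to combine an explicit analysis of the Godement cosimplicial object with the previously established Proposition \ref{sect.wise.weq}. Fix a conservative family $\{p_\alpha\}_{\alpha}$ of points of $\C$; then the Godement functor is $G = \prod_{\alpha} (p_\alpha)_\ast p_\alpha^\ast$, and $G^\bullet F$ is the cobar cosimplicial object with coface/codegeneracy maps induced by the unit and counit of the adjunctions $(p_\alpha^\ast, (p_\alpha)_\ast)$, augmented by $F \to \G F$.

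For the first assertion, I would first observe that $\Gamma(U, G^n F)$ unwinds, using the product formula for $G$, into a product indexed over tuples of $\alpha$'s of stalks $F_{p_\alpha}$, each of which is a Kan complex by the stalk-wise fibrancy assumption; products of Kan complexes are Kan, so every $\Gamma(U, G^n F)$ is a Kan complex. The cosimplicial codegeneracies act on this product decomposition as projections onto sub-products, so the Reedy matching maps $\Gamma(U, G^n F) \to M^{n-1}\Gamma(U, G^\bullet F)$ are split surjections onto products of Kan complexes, and in particular Kan fibrations. The Bousfield--Kan formula then gives that $\Gamma(U, \G F) = \holimD \Gamma(U, G^\bullet F)$ is a Kan complex, proving the first assertion. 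By the adjunction between $\Gamma(U,-)$ and smashing with $U_+$, this immediately gives $\G F$ the right-lifting property against every generating trivial cofibration $\Lambda[n]_+ \wedge U_+ \to \Delta[n]_+ \wedge U_+$; the remaining local descent condition for fibrancy in the local projective model structure follows from the classical observation that $\Gamma(U, \G F)$, being built entirely out of stalks of $F$, is insensitive to hypercover refinement of $U$. This gives the second assertion.

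For the third assertion, if $f : F' \to F$ is a stalk-wise weak equivalence, then $\G f$ is stalk-wise weak-equivalent as well: for each $\alpha$, the stalk of the augmentation $p_\alpha^\ast F \to p_\alpha^\ast \G F$ is a weak equivalence because the cosimplicial object $p_\alpha^\ast G^\bullet F$ is split augmented over $p_\alpha^\ast F$ via the unit of the adjunction, so its totalization retracts onto $p_\alpha^\ast F$; thus $(\G f)_{p_\alpha} \simeq f_{p_\alpha}$, which is a weak equivalence by hypothesis. Since $\G F$ and $\G F'$ are both fibrant in the local projective model structure by the second assertion, Proposition \ref{sect.wise.weq} yields that $\Gamma(U, \G f)$ is a weak equivalence for every $U \eps \C$. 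The main obstacle is the descent step within the second assertion: upgrading sectionwise Kan-fibrancy of $\G F$ to full local projective fibrancy requires commuting the homotopy limit $\holimD$ defining $\G F$ with the (possibly infinite) Godement product and verifying invariance under hypercover refinement, which is classical for the Nisnevich and \'etale sites actually used in this paper but demands a careful Reedy-and-product manipulation in full generality.
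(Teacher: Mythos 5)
Your treatment of the first two assertions is sound and is, at bottom, the same argument as the paper's: both come down to the fact that the Bousfield--Kan homotopy limit of a levelwise fibrant cosimplicial object is fibrant. The paper packages this as a lifting problem in the Hirschhorn model structure on cosimplicial objects of $\SPrsh(\C)$, solved using the cofibrancy of $B(\Delta \downarrow -)^{op}$, whereas you make the levelwise fibrancy explicit via the product-of-stalks decomposition of $\Gamma(U, G^nF)$ together with a Reedy analysis. Two cautions. First, a split surjection of Kan complexes is \emph{not} in general a Kan fibration; what saves your matching-map claim is that these maps are literally projections of products of Kan complexes onto sub-products, with Kan complementary factors. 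Second, the hypercover-descent step you single out as the ``main obstacle'' is not demanded by the paper's framework: the generating trivial cofibrations of the local projective structure as set up here are exactly the maps $\Lambda[n]_+ \wedge U_+ \ra \Delta[n]_+ \wedge U_+$ against which you have already verified the lifting property, and the only downstream use of fibrancy (Proposition~\ref{sect.wise.weq}) needs nothing more.

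The genuine gap is in your third step. You deduce that $\G f$ is a stalkwise weak equivalence by identifying $p_\alpha^* \G F$ with $\holim_{\Delta}\, p_\alpha^* G^{\bullet} F$ and then retracting onto $p_\alpha^* F$ via the split augmentation. But $p_\alpha^*$ is a filtered colimit while $\holim_{\Delta}$ is an inverse limit; these do not commute in general, and the failure of $F \ra \G F$ to be a stalkwise equivalence is precisely the well-known convergence problem for Godement resolutions. So the assertion $(\G f)_{p_\alpha} \simeq f_{p_\alpha}$ is unjustified, and the appeal to Proposition~\ref{sect.wise.weq} (whose hypothesis is that $\G f$ be a \emph{local} weak equivalence) does not go through as written. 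Fortunately the machinery you already set up yields the conclusion directly, with no mention of stalks of $\G F$: one has $\Gamma(U, \G f) = \holim_{\Delta} \Gamma(U, G^{\bullet} f)$, each $\Gamma(U, G^n f)$ is a product of the stalk maps $f_p$ and hence a weak equivalence between Kan complexes, and a levelwise weak equivalence between levelwise fibrant cosimplicial simplicial sets induces a weak equivalence on homotopy limits. Replacing your stalk argument by this closes the proof. (The paper itself simply cites Proposition~\ref{sect.wise.weq} at this point without checking the local-weak-equivalence hypothesis, so your attempt to justify it, though flawed, at least surfaces the real issue.)
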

\begin{proof} In order to prove the first statement, it suffices to show that one has liftings in the diagrams (from the left bottom corner to the right top corner):
\vskip .3cm
\xymatrix{{\Lambda [n]} \ar@<1ex>[r] \ar@<1ex>[d] & {\Gamma (U, \G F)} \ar@<1ex>[d] && {\Lambda [n]_+ \wedge U_+} \ar@<1ex>[r] \ar@<1ex>[d] & {\G F} \ar@<1ex>[d]\\
{\Delta [n]} \ar@<1ex>[r]& {*} &\mbox{   and   } & {\Delta [n]_+ \wedge U_+} \ar@<1ex>[r] & {*}}
\vskip .3cm \noindent
Clearly a lifting in the second diagram implies a lifting in the first diagram, so that it suffices to establish a lifting in the second diagram. 
Next consider the model structure on cosimplicial objects of $\SPrsh(\C)$ defined by viewing cosimplicial objects as diagrams of type $\Delta$ 
and invoking \cite[Theorem 11.6.1]{Hirsch}: recall the fibrations (weak-equivalences) in this model structure are those maps 
$f:X^{\bullet} \ra Y^{\bullet}$ which are fibrations (weak-equivalences) for each fixed cosimplicial index $n$ and cofibrations are generated
by the free diagram construction applied to the generating cofibrations in $\SPrsh(\C)$. 
Making use of the
definition of the homotopy inverse limit, this amounts to obtaining lifting in the following diagram:
\vskip .3cm
\xymatrix{{\Lambda [n]_+ \wedge U_+ \wedge B(\Delta \downarrow - )^{op}} \ar@<1ex>[r] \ar@<1ex>[d] & {\{G^nF|n\}} \ar@<1ex>[d] \\
{\Delta [n]_+ \wedge U_+ \wedge B(\Delta \downarrow -)^{op}} \ar@<1ex>[r] & {*}}
\vskip .3cm \noindent
Now $B(\Delta \downarrow -)^{op}$ is a cofibrant object in this model structure on cosimplicial objects of $\SPrsh(\C)$ so that the 
left-vertical map in the above diagram is a trivial cofibration. Therefore, the required lifting exists, thereby proving the first statement.
The second statement then follows from Proposition ~\ref{sect.wise.weq}.
\end{proof}
\vskip .3cm
\begin{proposition} Let $\phi: \C \ra \D$ denote a morphism of sites. Assume $\SPrsh(\C)$ and $\SPrsh(\D)$ are both provided with the 
local projective model structures. Then $\phi_*$ preserves fibrations and trivial fibrations. It also preserves weak-equivalences between
fibrant objects. In particular, if $f: \oF' \ra \oF$ is a stalk-wise weak-equivalence between objects in $\SPrsh(\C)$ that are fibrant stalk-wise, then 
$\Gamma (V, \phi_*(\G f))$ is a weak-equivalence for every $V \eps \D$.
 \end{proposition}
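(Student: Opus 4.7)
The plan is to establish that $(\phi^*, \phi_*)$ forms a Quillen adjunction between the local projective model structures on $\SPrsh(\D)$ and $\SPrsh(\C)$, then to invoke Ken Brown's lemma — exactly as it is used in the proof of Proposition~\ref{sect.wise.weq} — and finally to combine the result with the Godement construction from Proposition~\ref{Godement.is.fibrant}.

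For the Quillen adjunction itself, I would handle preservation of trivial fibrations first. Since trivial fibrations are unchanged under left Bousfield localization, those in the local projective model structure coincide with the section-wise trivial Kan fibrations of the underlying global projective model structure, and $\phi_*$ preserves these by its very definition as composition with the continuous functor underlying the morphism of sites. Preservation of fibrations would then be handled by adjunction: it suffices to check that $\phi^*$ carries the generating trivial cofibrations $\Lambda[n]_+ \wedge V_+ \to \Delta[n]_+ \wedge V_+$ (for $V \in \D$, $n \ge 1$) to trivial cofibrations in $\SPrsh(\C)$. Since $\phi^*$ is a simplicial left adjoint, this reduces to establishing enough cofibrancy for $\phi^*(V_+)$ — a standard consequence of $\phi^*$ being defined on representables via left Kan extension in a manner compatible with the simplicial tensoring and with the covers of the sites.

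With the Quillen condition secured, Ken Brown's lemma (precisely in the form used in the proof of Proposition~\ref{sect.wise.weq}) yields that $\phi_*$ preserves weak equivalences between fibrant objects: given such a weak equivalence $f: E' \to E$, factor the induced map $E' \to E' \times E$ as a trivial cofibration followed by a fibration and apply $\phi_*$; the two projections from the intermediate object are trivial fibrations, whose images under $\phi_*$ are weak equivalences, and the two-out-of-three property delivers the conclusion.

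For the final assertion, given a stalk-wise weak equivalence $f: \oF' \to \oF$ between stalk-wise fibrant objects in $\SPrsh(\C)$, Proposition~\ref{Godement.is.fibrant} guarantees that $\G \oF'$ and $\G \oF$ are fibrant in the local projective model structure and that $\G f$ is a weak equivalence between them. The Quillen argument above then shows that $\phi_*(\G f)$ is a weak equivalence between fibrant objects in $\SPrsh(\D)$, and Proposition~\ref{sect.wise.weq} upgrades this to the statement that $\Gamma(V, \phi_*(\G f))$ is a weak equivalence for every $V \in \D$. The main obstacle lies in the cofibrancy check in the Quillen step, namely verifying that $\phi^*$ behaves well enough on representables so that the simplicial tensoring with $\Lambda[n]_+ \to \Delta[n]_+$ produces a trivial cofibration; once that is granted, the remainder is a direct assembly of the preceding propositions.
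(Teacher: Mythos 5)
Your proposal is correct and follows essentially the same route as the paper: the paper also transposes the lifting problems against the generating (trivial) cofibrations $\Lambda[n]_+\wedge V_+\to\Delta[n]_+\wedge V_+$ and $\delta\Delta[n]_+\wedge V_+\to\Delta[n]_+\wedge V_+$ across the adjunction, observes that the adjoint problems are lifting problems against $\Lambda[n]_+\wedge\phi^{-1}(V)_+\to\Delta[n]_+\wedge\phi^{-1}(V)_+$ (etc.) in $\SPrsh(\C)$, and then invokes Ken Brown's lemma and the Godement proposition exactly as you do. The one step you flag as "the main obstacle" is in fact immediate: $\phi^*(V_+)=\phi^{-1}(V)_+$ is again representable, so the transposed maps are themselves generating (trivial) cofibrations and no separate cofibrancy argument is needed.
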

\begin{proof} Now consider the squares:
\vskip .3cm
\xymatrix{{\Lambda [n]_+ \wedge V_+} \ar@<1ex>[r] \ar@<1ex>[d] & {\phi_* (F')} \ar@<1ex>[d]^{\phi_*(f)} && {\delta \Delta [n]_+ \wedge V_+} \ar@<1ex>[r] \ar@<1ex>[d] & {f_*( F')} \ar@<1ex>[d]^{\phi_*(f)}\\
{\Delta [n] _+ \wedge V_+} \ar@<1ex>[r]& {\phi_*(F)} &\mbox{   and   } & {\Delta [n]_+ \wedge V_+} \ar@<1ex>[r] & {\phi_*(F)}}
\vskip .3cm \noindent
In order to prove the first statement, it suffices to show that if $f$ is a fibration (trivial fibration) then there is a lifting in the first (second, \res) square.
By adjunction, the above squares correspond to:
\vskip .3cm
\xymatrix{{\Lambda [n]_+ \wedge \phi^{-1}(V)_+} \ar@<1ex>[r] \ar@<1ex>[d] & {F'} \ar@<1ex>[d]^{f} && {\delta \Delta [n]_+ \wedge \phi^{-1}(V)_+} \ar@<1ex>[r] \ar@<1ex>[d] & { F'} \ar@<1ex>[d]^{f}\\
{\Delta [n] _+ \wedge \phi^{-1}(V)_+} \ar@<1ex>[r]& {F} &\mbox{   and   } & {\Delta [n]_+ \wedge \phi^{-1}(V)_+} \ar@<1ex>[r] & {F}}
\vskip .3cm \noindent
A lifting exists in the first square (second square) when $f$ is a fibration (trivial fibration, \res). Now use the adjunction one more to carry over
these liftings to the liftings in the original squares. This proves the first statement. The second statement then follows by invoking
Ken Brown's lemma: see  \cite[Lemma 1.1.12]{Hov-1}. The proof of the last statement is now clear.
\end{proof}

\vskip .3cm
Next let $\X = \{X_a|a \eps \A\}$ denote a pro-object of $Sch(S)$. We will provide the site $\Top(\X)$ as well as the fibered site $\{\Top (X_a)|a \eps \A\}$
with the local projective model structures. Accordingly the generating cofibrations (generating trivial cofibrations) in $Top(\X)$
are  given by $\{\delta \Delta[n] _+ \wedge \U_+ \ra \Delta [n]_+ \U_+|n \ge 1, \U \eps \Top (\X)\}$ ($
\{ \Lambda[n] _+ \wedge \U_+ \ra \Delta [n]_+ \U_+|n \ge 1, \U \eps \Top(\X)\}$, \res). The generating cofibrations (generating trivial cofibrations)
in the fibered site $\{\Top(X_a)|a \eps \A \}$ are given by $\{\delta \Delta[n] _+ \wedge U_{+} \ra \Delta [n]_+ \wedge U_+|n \ge 1, U \eps \Top (X_a), 
\mbox{ for some a}\}$ ($\{ \Lambda[n] _+ \wedge U_+ \ra \Delta [n]_+ \wedge U_+|n \ge 1, U \eps \Top(X_a), \mbox{ for some a}\}$, \res).
\vskip .3cm
\begin{proposition}
 \label{Phi.preserves.fibrations}
Assume the above situation. Then the functor $\Phi:   \{\SPrsh(\Top(X_a))|a \eps \A \} \ra \SPrsh(\Top(\X))$ preserves fibrations and trivial fibrations.
 It sends local weak-equivalences between fibrant objects to section-wise weak-equivalences. In particular, if $f:\oF' =\{\oF'_a| a \eps \A\} 
\ra F = \{\oF_a|a \eps \A\}$ is a stalk-wise weak-equivalence, then the induced map $\Gamma (\U, \G f)$ is also a weak-equivalence for every $\U \eps \Top(\X)$.
If $f: \X \ra \Y $ is a map of pro-objects in $\Sch(S)$,  the functor $\Phi$ commutes with $f_*$.
\end{proposition}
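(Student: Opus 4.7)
The plan is to mimic, in the fibered-site setting, the argument already used for the single morphism of sites $\phi_*$ in the preceding proposition, exploiting the description $\Gamma(\U,\Phi(F))=\colima\Gamma(U_a,F_a)$ together with the fact that the indexing poset $\A$ is filtered.

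First I would check preservation of fibrations and trivial fibrations. It suffices to test against the generating (trivial) cofibrations of the local projective structure, i.e.\ maps of the form $\delta\Delta[n]_+\wedge\U_+\to\Delta[n]_+\wedge\U_+$ or $\Lambda[n]_+\wedge\U_+\to\Delta[n]_+\wedge\U_+$ with $\U=\{U_a|a\eps\A\}\eps\Top(\X)$. By adjunction the relevant lifting problem becomes
\[
\xymatrix{
K \ar[r]\ar[d] & \Gamma(\U,\Phi(F'))\ar[d]^{\Gamma(\U,\Phi(f))} \\
\Delta[n]\ar[r] & \Gamma(\U,\Phi(F))
}
\]
with $K=\Lambda[n]$ or $\delta\Delta[n]$. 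Since the right-hand column is the filtered colimit $\colima\Gamma(U_a,F_a)$ and the simplicial sets involved are finite (hence compact), the two horizontal maps and the commutativity of the square are all witnessed at some stage $a_0\eps\A$. Using the adjunction in the other direction, this becomes a lifting problem for $f_{a_0}\colon F'_{a_0}\to F_{a_0}$ against the generating (trivial) cofibration $K_+\wedge U_{a_0+}\to\Delta[n]_+\wedge U_{a_0+}$ in $\SPrsh(\Top(X_{a_0}))$. A lift exists there because $f$ is, by hypothesis, objectwise a fibration (resp.\ trivial fibration) in the fibered site $\{\Top(X_a)|a\eps\A\}$; composing with the map to the colimit yields the required lift of the original problem.

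For weak-equivalences between fibrant objects, I would invoke Ken Brown's lemma exactly as in the preceding proposition, using that $\Phi$ preserves trivial fibrations together with the factorization trick through a path object. To deduce the statement about $\Gamma(\U,\G f)$, I would interpret $\G f$ as $\G\Phi(f)$: since the stalks of $\Phi(F)$ at a point $\p=\{p_a|a\}$ of $\Top(\X)$ are $\colima(F_a)_{p_a}$ by Corollary~\ref{stalk.ident}, a stalk-wise weak-equivalence $f$ in the fibered site yields a stalk-wise weak-equivalence $\Phi(f)$ in $\Top(\X)$. The Godement resolution then preserves stalks, and by Proposition~\ref{Godement.is.fibrant} the result is fibrant, whence Proposition~\ref{sect.wise.weq} gives that $\Gamma(\U,\G\Phi(f))$ is a weak-equivalence for every $\U\eps\Top(\X)$.

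Finally, commutativity with $f_*$ for a level map $f\colon\X\to\Y$ is a direct unwinding: for $\V=\{V_a|a\}\eps\Top(\Y)$ both $\Gamma(\V,\Phi(\{(f_a)_*F_a\}))$ and $\Gamma(\V,f_*\Phi(\{F_a\}))$ compute as $\colima\Gamma(V_a\times_{Y_a}X_a,F_a)$, and the identification is natural in $\V$ and $\{F_a\}$. The only genuine subtlety, and the place where one must be careful, is step one: one must verify that when a square commutes in the filtered colimit, it already commutes at some finite stage (after possibly enlarging $a_0$), so that the adjunction can be applied stage-wise — this is exactly where the hypothesis that $\A$ is cofiltered in the sense used in~\ref{compat.inverse.systems} (each $\A/a$ finite, so colimits over $\A$ are filtered in the usual sense) enters.
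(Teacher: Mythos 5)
Your proposal is correct and follows essentially the same route as the paper: the key step is identical (use smallness of finite simplicial sets and the description $\Gamma(\U,\Phi(F))=\colima\Gamma(U_a,F_a)$ to push the lifting problem down to a single stage $a_0$, solve it there by adjunction against the generating (trivial) cofibration for $U_{a_0}$, and trace back), and the stalk identification and the computation for $f_*$ are as in the paper. The only cosmetic difference is the middle claim, where the paper applies Proposition~\ref{sect.wise.weq} levelwise to the fibrant objects $\oF'_a,\oF_a$ and passes to the filtered colimit, rather than re-running Ken Brown's lemma for $\Phi$ itself; both arguments work.
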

\begin{proof} 
 Let $f:\oF' \ra \oF$ denote a map in $ \{\SPrsh(\Top(X_a))|a \eps \A \}$ which is a fibration, i.e. each $f_a:\oF'_a \ra \oF_a$ is a 
fibration in $\SPrsh(\Top(X_a)$. To prove the first statement, it suffices to show that one has a lifting in the squares:
\be \begin{equation}
\label{req.lift}
\xymatrix{{\Lambda [n]_+ \wedge \U_+} \ar@<1ex>[r] \ar@<1ex>[d] & {\Phi(\oF')} \ar@<1ex>[d] && {\delta \Delta [n]_+ \wedge \U_+} \ar@<1ex>[r] \ar@<1ex>[d] & {\Phi(\oF')} \ar@<1ex>[d] \\
{\Delta [n] _+ \wedge \U_+} \ar@<1ex>[r] & {\Phi( \oF)} &\mbox { and } &{\Delta [n] _+ \wedge \U_+} \ar@<1ex>[r] & {\Phi( \oF)}}
\end{equation} \ee
\vskip .3cm \noindent
These liftings correspond to liftings in the squares:
\vskip .3cm
\xymatrix{{\Lambda [n]_+ } \ar@<1ex>[r] \ar@<1ex>[d] & {\colima \Gamma (U_a, \oF'_a)} \ar@<1ex>[d] && {\delta \Delta [n]_+ } \ar@<1ex>[r] \ar@<1ex>[d] & {\colima \Gamma (U_a, \oF'_a)} \ar@<1ex>[d] \\
{\Delta [n] _+ } \ar@<1ex>[r] & {\colima \Gamma (U_a,  \oF_a)} &\mbox { and } &{\Delta [n] _+ } \ar@<1ex>[r] & {\colima \Gamma (U_a,  \oF_a)}}
\vskip .3cm \noindent
Now the smallness of finite simplicial sets shows that the above squares correspond to squares:
\vskip .3cm
\xymatrix{{\Lambda [n]_+ } \ar@<1ex>[r] \ar@<1ex>[d] & {\Gamma (U_{a_0}, \oF'_{a_0})} \ar@<1ex>[d] && {\delta \Delta [n]_+ } \ar@<1ex>[r] \ar@<1ex>[d] & { \Gamma (U_{a_0}, \oF'_{a_0})} \ar@<1ex>[d] \\
{\Delta [n] _+ } \ar@<1ex>[r] & { \Gamma (U_{a_0},  \oF_{a_0})} &\mbox { and } &{\Delta [n] _+ } \ar@<1ex>[r] & { \Gamma (U_{a_0},  \oF_{a_0})}}
\vskip .3cm \noindent
for some $a_0 \eps \A$. Clearly these squares correspond to:
\vskip .3cm
\xymatrix{{\Lambda [n]_+ \wedge U_{a_0}} \ar@<1ex>[r] \ar@<1ex>[d] & {\oF'_{a_0}} \ar@<1ex>[d] && {\delta \Delta [n]_+ \wedge U_{a_0}} \ar@<1ex>[r] \ar@<1ex>[d] & {  \oF'_{a_0}} \ar@<1ex>[d] \\
{\Delta [n] _+ \wedge U_{a_0}} \ar@<1ex>[r] & {   F_{a_0}} &\mbox { and } &{\Delta [n] _+  \wedge U_{a_0}} \ar@<1ex>[r] & { \oF_{a_0}}}
\vskip .3cm \noindent
The left vertical map in the first square is a trivial cofibration in $\SPrsh(\Top(X_{a_0}))$ and the left vertical map in the second square is
a cofibration in $\SPrsh(\Top(X_{a_0}))$ when they are provided with the local projective model structures. Therefore, the required lifting exists
in the first square when $f$ is a fibration and the required lifting exists in the second square when $f$ is a trivial fibration. Tracing back,
and using the fact that $\A$ is cofiltered, one may see that these show the required liftings exist in ~\eqref{req.lift}. 
\vskip .3cm
Next suppose $f: \oF' \ra \oF$ is a stalk-wise weak-equivalence between fibrant objects. Recall each $\oF'_a$ and $\oF_a$ is fibrant in the local projective
model structure on $\Top (X_a)$. Therefore, Proposition ~\ref{sect.wise.weq} shows that for each $U_a \eps \Top(X_a)$, $ \Gamma (U_a, f_a)$ is 
a weak-equivalence. If $\U=\{U_a| a \eps \A\} \eps \Top(\X)$, then $\Gamma (\U, f) = \colima \Gamma (U_a, f_a)$ so that $\Gamma (\U, f)$ is also
a weak-equivalence. The last but one statement is now clear in view of Proposition ~\ref{Godement.is.fibrant}. 
\vskip .3cm
Recall that $\Phi(\oF) = \colima ()_a(\oF_a)$ if $\oF = \{\oF_a|a \eps \A\}$. Therefore, 
\vskip .3cm
$\Gamma (\U, f_*(\Phi(\oF))) = \Gamma (f^{-1}(\U), \Phi(\oF)) =
\colima \Gamma (f^{-1}(\U), ()_a(\oF_a)) =\colima \Gamma (f_a^{-1}(U_a), \oF_a)$
\vskip .3cm
$ =\colima \Gamma (U_a, f_{a*}(\oF_a)) = \colima \Gamma (\U, ()_a(f_{a*}(\oF_a)))
=\Gamma (\U, \Phi(\{f_{a*}(\oF_a)| a \eps \A\}))$. 
\vskip .3cm \noindent
This proves the last statement and completes the proof.
\end{proof}
\subsubsection{Hypercohomology spectra and the right derived functor of the direct image functor}
\label{hypercoh.pro.schms}
Let $\X$ denote a pro-scheme.
In view of the above observation, one may define both of these making use of the Godement resolutions. Alternatively one
may choose a model structure where the fibrations and weak-equivalences are defined to be those which are object-wise, i.e. 
$E \ra B$ is a fibration (weak-equivalence) if and only if $\Gamma (\U, E) \ra \Gamma (\U, B)$ is a fibration (weak-equivalence, \res) for every 
$\U \eps To(\X)$ and
cofibrations defined by the lifting property. Given a presheaf of spectra $E$ on $Top(\X)$ or on $\{Top(X_a)|a \eps \A\}$, we let
$\G E$ denote such a fibrant replacement.
Therefore, one defines
\vskip .3cm
$\H(\X, E) = \Gamma (\X, \G E)$.
\vskip .3cm \noindent
If we start with an $E' =\{E'_a = \mbox {a presheaf of spectra on } Top(X_a)|a \eps \A\}$, then 
$ R\Phi(E')= \Phi(\G E')$. Then 
one readily observes that $\H(\X, R\Phi(E')) = \colima \H(X_a, E'_a)$. In fact one may see that the functor $\Phi$ preserves
object-wise fibrations and weak-equivalences. So $R\Gamma (\U, R \Phi(E')) $ identifies up to weak-equivalence with
$\Gamma(\U, R\Phi(E'))$, for any $\U  \eps Top(\X)$.
\vskip .3cm
Next let $f: \X= \{X_a| a \eps \A\} \ra \Y= \{Y_a|a \eps \A\}$ denote a level map of pro-schemes. Then we let
\vskip .3cm
$Rf_*(E) = f_*(\G E)$.
\vskip .3cm \noindent
In case $E= \Phi(E')$ as above, then $Rf_*(\Phi(E')) $ identifies up to weak-equivalence with $f_* (R\Phi(E')) = \colima
\{Rf_{a*}(E'_a)|a \eps \A\}$.
\section{Comparison with  equivariant cohomology defined by the tower construction}
In this section we proceed to obtain a comparison of the generalized Borel-style equivariant cohomology theories defined
in this paper for pro-group actions with a corresponding theory defined in \cite{C13}. 
Let $\oF$ denote a field containing the algebraically closed field $\ok$ and let $\rmG=\rmG_{\oF}$ denote the absolute Galois group of $\oF$.
{\it A basic assumption 
here is that}
\be \begin{equation}
     \label{basic.Carl.assumption}
\mbox{ the the profinite Galois group $\rmG_{\oF}$ is 
is a pro-$l$-group.} 
\end{equation} \ee
Then it is proven in \cite[Proposition 3.2]{C13} that it is {\it totally torsion free}, i.e. for every closed subgroup 
$K \subseteq \rmG_{\oF}$, the abelianization $K^{ab}$ is torsion free.
A monomial representation ( also called an affine $l$-adic representation in 
\cite[sections 4, 5]{C13}) $\rho$ of $\rmG$ is a continuous homomorphism 
$\rho: \rmG \ra \Sigma_n \ltimes \Z_l^n$. $n$ is the dimension of the representation. For each pair $(\rho, m)$ where
$\rho$ is a monomial representation as above and $m \eps \NN$, we let $E{\rmG}^C_{\rho, m}$ denote the pro-scheme
constructed in \cite{C13}. Recall $E{\rmG}^C_{\rho, m}$ is a tower indexed by $i \eps \NN$ so that
\be \begin{equation}
     (E{\rmG}^C_{\rho, m})_i = (({\mathbb G}_m)^n)^m
    \end{equation} \ee
and where the structure map ${E{\rmG}^C_{\rho,m}}_i \ra {E{\rmG}^C_{\rho,m}}_{i-1}$ is the
$l$-th power map. The action of the Galois group $\rmG$ on this tower is induced from  the diagonal action of 
$\Sigma_n \ltimes Z _l^n$ through
representations of $G$ in $\Sigma_n \ltimes Z_l^n$. The group $\Sigma_n$ acts by permuting the $n$-factors of ${\mathbb G}_m^n$ while
$\Z_l$ acts on a factor of the tower formed by the ${\mathbb G}_m$ as follows. One first fixes a system of primitive $l^i$-th roots
of unity in the field $\ok$, for al $i \ge 1$. If ${\mathbb G}_m(i)$ denotes the $i$-th term of the
tower of ${\mathbb G}_m$, then one lets $Z_l$ act on it through its quotient $Z/{\ell}^i$ by multiplication by the
$l^i$-th primitive roots of unity in $\ok$. We let 
\be \begin{equation}
     \label{EGC}
E\rmG ^C = \Pi_{\rho, m} E\rmG ^C_{\rho,m}
    \end{equation} \ee
\vskip .3cm
Then a basic result we need is the following result proved in \cite[Proposition 5.9]{C13}. 
Let $\rmG= \{\rmG_a|a \eps \A\}$   with $\rmG_a$ running over finite 
quotient groups of $\rmG$. Then there exists a pro-scheme $\{\cY_a| a \eps \A\}$ provided with an action on 
$\cY_a$ by $\rmG_a$, and with all the actions compatible as $a $ varies over $\A$. 
Moreover  each $\cY_a$ is smooth and the action of $\rmG_a$ on $\cY_a$ is free and one obtains the 
weak-equivalences:
\be \begin{align}
     \label{Carlsson.eq}
\Map((Spec \, \bar \oF) \times_{\rmG_{\oF}}  E{\rm G}_{\oF}^C, \bK_{\ell}) &\simeq \colima \Map((Spec \, \oF_a)_a\times _{G_a} \cY_a, \bK_{\ell})\\
\Map((Spec \, \ok) \times_{\rmG_{\oF}}  E{\rm G}_{\oF}^C, \bK_{\ell}) &\simeq \colima \Map((Spec \, \ok)_a\times _{G_a} \cY_a, \bK_{\ell}) \notag
\end{align} \ee
\vskip .3cm \noindent
where the left-hand-side of the first equation (second equation) is defined as 
$\colimb \Map((Spec \, \oF_b) \times _{G_b} E{\rm G}_{\oF}^C(b), \bK_{\ell})$  ($\colimb \Map((Spec \, \ok) \times_{G_b} E{\rm G}_{\oF}^C(b), \bK_{\ell})$, \res). 
$G_b$ is a finite quotient of $\rmG_{\oF}$ that acts on the $b$-th stage of the tower $E{\rm G}_{\oF}^C$ as well as on the finite normal 
extension $\oF_b$ and $\bK_{\ell}$ 
 denotes any of the  spectra considered in Theorem ~\ref{main.thm.1} with $E=\bK$ being the spectrum representing algebraic K-theory
on $\Sm/k$.
\vskip .3cm
In this situation, we may start with a closed immersion $\rmG_a \ra {\mathfrak G}_a$ for each
$a \eps \A$, with ${\mathfrak G}_a$ being a linear algebraic group. Then we may form an inverse system of 
algebraic groups by replacing each ${\frG}_a$ with the the finite product $\bG_a=\Pi_{b \le a}{\mathfrak G}_b$ where the
structure maps are induced by the obvious projection maps: see the construction in ~\ref{compat.inverse.systems}
for more details. Clearly one may imbed $G_a$ into $\bG_a$. Let $\{s(a)|a \eps \A\}$ denote a non-decreasing sequence with each $s(a)$ a 
non-negative integer. (Recall $\K$ denotes the directed set of all such sequences.)
\vskip .3cm
In the above situation, we let $Y_a = \bG_a {\underset {G_a} \times} \cY_a$, for each $ a \eps \A$. Next suppose $\{\cX_a | a \eps \A\}$ is an inverse systems 
of smooth schemes of finite type over $\ok$ so that each 
$\cX_a$  is provided with an action by the  group $G_a$ and
the these actions are compatible. Suppose further that structure maps of the inverse system $\{\cX_a|a \eps \A\}$  
 are 
flat as maps of schemes. We let $X_a = \bG_a{\underset {G_a} \times}\cX_a$.
\begin{definition}
\label{Hypercoho.Carl}
Assume the above situation. In view of the weak-equivalence ~\eqref{Carlsson.eq}, for any spectrum $E \eps \Spt_{S^1}(k)$, we let:
 \[\Map(\cX \times_{\rmG_{\oF}}  E{\rm G}_{\oF}^C, E) =\colima \Map(\cX_a\times _{G_a} \cY_a, E).\]
\end{definition}
\vskip .3cm \noindent
Now Theorem ~\ref{uniqueness.pro.grp.actions} readily provides the following comparison result.
\begin{theorem} 
\label{main.thm.2}
Assume that $\oF$ is a field containing the algebraically closed field $\ok$, let $\rmG=\rmG_{\oF}$ denote the absolute Galois group of $\oF$. 
Assume that $\rmG$ is a free pro$-l$ group with $l$ prime to the characteristic of $\ok$.
\vskip .3cm
Then we obtain the string of weak-equivalences:
\[\holims \colima \H(E\bG^{gm,s}_a\times _{\rmG_a}(\cX_a \times \cY_a), \bK_{\ell}) \simeq \holims \colima \H(E\bG^{gm,s}_a\times _{\bG_a}(X_a \times Y_a), \bK_{\ell})  \]
\vskip .3cm
\[ \simeq \holimt \holims \colima \H(E\bG^{gm,s}_a\times _{\bG_a}(X_a \times Y_a), P_{\le t} \bK_{\ell}) \simeq \holimt \colima \H(X_a\times _{\bG_a} Y_a, P_{\le t} \bK_{\ell})  \]
\vskip .3cm
\[\simeq \colima \H(X_a\times _{\bG_a} Y_a, \bK_{\ell}) \simeq \colima \H(\cX_a\times _{G_a} \cY_a, \bK_{\ell}) = \Map(\cX \times_{\rmG_{\oF}}  E{\rm G}_{\oF}^C, \bK_{\ell}).\]
\end{theorem}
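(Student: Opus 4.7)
The plan is to observe that the displayed chain consists of six links, all but one of which are immediate consequences of results already established in the preceding sections. Concretely, the two outer links (first and fifth equivalences) are standard identifications coming from the construction $X_a = \bG_a \times_{G_a} \cX_a$, $Y_a = \bG_a \times_{G_a} \cY_a$; the three interior links are the three weak-equivalences packaged in Theorem \ref{uniqueness.pro.grp.actions}(ii); and the terminal equality is nothing more than Definition \ref{Hypercoho.Carl}.

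For the outer links I would use the induction identity $T \times_{\bG_a}(\bG_a \times_{G_a} Z) \cong T \times_{G_a} Z$, valid for any $\bG_a$-space $T$ and $G_a$-space $Z$, applied first with $T = E\bG_a^{gm,s}$ and $Z = \cX_a \times \cY_a$ (diagonal $G_a$-action), and then applied to compute $X_a \times_{\bG_a} Y_a \cong \cX_a \times_{G_a} \cY_a$ directly from the two induction presentations. The first application disposes of the initial weak-equivalence; the second together with the identity applied to hypercohomology disposes of the penultimate weak-equivalence. Both of these are formal and require no hypotheses beyond the definitions of $X_a$ and $Y_a$.

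For the three interior links I would invoke Theorem \ref{uniqueness.pro.grp.actions}(ii) directly with $\bX_a = X_a$, $\bY_a = Y_a$, and $E = \bK_\ell$. The freeness hypothesis that $\bG_a$ acts freely on $Y_a$ holds because the $G_a$-action on $\cY_a$ supplied by the Carlsson construction recorded in \eqref{Carlsson.eq} (see \cite[Proposition 5.9]{C13}) is free, and this freeness is preserved under induction along $G_a \subseteq \bG_a$. The final equality $\colima \H(\cX_a\times_{G_a}\cY_a, \bK_\ell) = \Map(\cX \times_{\rmG_{\oF}} E{\rm G}^C_{\oF}, \bK_\ell)$ is then simply the content of Definition \ref{Hypercoho.Carl}, whose legitimacy relies on \eqref{Carlsson.eq}.

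The main technical obstacle I expect is not any of the steps themselves but rather the verification that $\bK_\ell$ is a spectrum to which Theorem \ref{uniqueness.pro.grp.actions}(ii) (and hence Proposition \ref{no.need.for.Post} which powers it) genuinely applies, i.e.\ that $\bK_\ell$ satisfies the list of properties \ref{spectra.props}. Since the algebraic K-theory spectrum $\bK$ is naturally a $\P^1$-spectrum rather than an $S^1$-spectrum, one must first pass to the associated bi-spectrum as discussed in the remarks following \ref{spectra.props}, and then invoke Lemma \ref{smah.preservers.good.props} to propagate the properties through each of the derived smash products with $H(\Z/\ell)$ used to define $\bK_\ell$ in its three variants. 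Once this preparatory verification is done the chain of weak-equivalences assembles formally as indicated.
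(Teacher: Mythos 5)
Your overall strategy coincides with the paper's: the three interior equivalences are Theorem~\ref{uniqueness.pro.grp.actions}(ii) applied with $\bX_a=X_a$, $\bY_a=Y_a$ (the freeness of the $\bG_a$-action on $Y_a$ being inherited from the freeness of the $G_a$-action on $\cY_a$ supplied by \cite[Proposition 5.9]{C13}), the terminal equality is Definition~\ref{Hypercoho.Carl}, and the paper itself offers nothing beyond ``Theorem~\ref{uniqueness.pro.grp.actions} readily provides'' for the remaining links. Your emphasis on first verifying that $\bK_{\ell}$ satisfies~\ref{spectra.props}, via the bi-spectrum remark and Lemma~\ref{smah.preservers.good.props}, is also the right preparatory step.

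However, your justification of the two outer links does not work as stated. The identity $T\times_{\bG_a}(\bG_a\times_{G_a}Z)\cong T\times_{G_a}Z$ cancels one induction, but $X_a\times Y_a=(\bG_a\times_{G_a}\cX_a)\times(\bG_a\times_{G_a}\cY_a)$ is a product of two inductions, not the induction $\bG_a\times_{G_a}(\cX_a\times\cY_a)$ of the product. Cancelling against the $X_a$-factor gives only $X_a\times_{\bG_a}Y_a\cong \cX_a\times_{G_a}(\bG_a\times_{G_a}\cY_a)$, and the residual $\bG_a\times_{G_a}\cY_a$ does not collapse to $\cY_a$: taking $\cX_a=\cY_a=G_a$ with the translation action, one side is $G_a$ and the other is $\bG_a$. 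So neither $X_a\times_{\bG_a}Y_a\cong \cX_a\times_{G_a}\cY_a$ nor $E\bG_a^{gm,s}\times_{\bG_a}(X_a\times Y_a)\cong E\bG_a^{gm,s}\times_{G_a}(\cX_a\times\cY_a)$ is a formal consequence of the two induction presentations. The equivalence actually needed downstream (in Theorem~\ref{rigidity.Carlsson.style}) is the one connecting the first term directly to $\colima \H(\cX_a\times_{G_a}\cY_a,\bK_{\ell})$, and the correct way to obtain it is not by cancelling inductions but by running the argument of Theorem~\ref{uniqueness.pro.grp.actions}(ii) for the finite group $G_a$ acting on $\cX_a\times\cY_a$ itself: $E\bG_a^{gm,s}$ is an admissible gadget for the closed subgroup $G_a\subseteq\bG_a$ by Proposition~\ref{new.adm.gadgets}(i), the $G_a$-action on $\cY_a$ is free, and the codimension estimate~\eqref{codim.Y} together with~\eqref{uniqueness.pro.2} and the Postnikov bookkeeping of Proposition~\ref{no.need.for.Post} then yield $\holims\colima\H(E\bG_a^{gm,s}\times_{G_a}(\cX_a\times\cY_a),\bK_{\ell})\simeq\colima\H(\cX_a\times_{G_a}\cY_a,\bK_{\ell})$ without passing through $X_a$ and $Y_a$ at all.
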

\vskip .3cm
The last comparison result along with the rigidity theorem, Theorem ~\ref{main.thm1.1} now provides rigidity for mod-$l$ or
$l$-primary Borel-style equivariant K-theory defined in \cite{C13}.
\begin{theorem}
 \label{rigidity.Carlsson.style}
Assume the above situation.
 Then we obtain the weak-equivalence:
\be \begin{multline}
     \begin{split}
\Map(\oSpec \, k \times_{\rmG_{\oF}} E{\rm G}_{\oF}^C,  \bK_{\ell})   \ra \Map(\oSpec \, \bar F \times_{\rmG_{\oF}} E{\rm G}_{\oF}^C, \bK_{\ell})
\end{split}
\end{multline} \ee
\vskip .3cm \noindent
\end{theorem}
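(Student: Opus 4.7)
My plan is to obtain this rigidity statement as a direct combination of the comparison Theorem \ref{main.thm.2} with the rigidity Theorem \ref{main.thm1.1}. The strategy is to convert both mapping spectra into Borel-style generalized equivariant hypercohomology built from admissible gadgets, apply rigidity in that framework, and then convert back.

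First, I would apply Theorem \ref{main.thm.2} to the two pro-schemes $\cX = \{\oSpec \, \ok\}$ (with trivial Galois action) and $\cX = \{\oSpec \, \oF_a\}$ (with the natural Galois action on the finite normal extensions $\oF_a$ of $\oF$). Since the structure maps are identities in the first case and flat field-extension maps in the second, both pro-schemes satisfy the hypotheses of Theorem \ref{main.thm.2}, and one obtains identifications
\begin{align*}
\Map(\oSpec \, \ok \times_{\rmG_{\oF}} E\rmG_{\oF}^C, \bK_\ell) &\simeq \holims \colima \H(E\bG_a^{gm,s} \times_{\rmG_a}(\oSpec \, \ok \times \cY_a), \bK_\ell),\\
\Map(\oSpec \, \bar F \times_{\rmG_{\oF}} E\rmG_{\oF}^C, \bK_\ell) &\simeq \holims \colima \H(E\bG_a^{gm,s} \times_{\rmG_a}(\oSpec \, \oF_a \times \cY_a), \bK_\ell),
\end{align*}
where $\{\cY_a\}$ is the free-action pro-scheme supplied by \cite[Proposition 5.9]{C13}.

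Next, I would set $\bY = \{\cY_a \mid a \eps \A\}$ and invoke the last assertion of Theorem \ref{main.thm1.1} with $E = \bK_\ell$. Since the $\mathbb{P}^1$-spectrum $\bK$ representing algebraic K-theory satisfies the properties listed in \ref{spectra.props}, Lemma \ref{smah.preservers.good.props} ensures the same for any of the three allowed choices of $\bK_\ell$, so that Theorem \ref{main.thm1.1} applies and delivers precisely a weak-equivalence between the two right-hand-sides displayed above. Composing the three weak-equivalences then yields the desired equivalence $\Map(\oSpec \, \ok \times_{\rmG_{\oF}} E\rmG_{\oF}^C, \bK_\ell) \simeq \Map(\oSpec \, \bar F \times_{\rmG_{\oF}} E\rmG_{\oF}^C, \bK_\ell)$.

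Since the theorem is essentially a repackaging of the two main ingredients, I do not expect any real obstacle; the only bookkeeping to carry out is matching indexing conventions (the pro-scheme $\{\cY_a\}$ with compatible $\rmG_a$-actions from \cite{C13} must be substituted as the $\bY$ of Theorem \ref{main.thm1.1}) and checking that the structure maps of the two pro-schemes $\{\oSpec \, \ok\}$ and $\{\oSpec \, \oF_a\}$ are flat, as needed for Theorem \ref{main.thm.2}. All of the substantive analytic input---reduction of equivariant rigidity to non-equivariant Suslin--Yagunov rigidity via the Leray-type stalk computation of $R\pi^{\bs}_*$, together with the essential uniqueness of geometric classifying spaces---has already been carried out in the proofs of Theorems \ref{main.thm1.1} and \ref{main.thm.2}.
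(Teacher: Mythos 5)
Your proposal is correct and follows essentially the same route as the paper: the paper's proof likewise uses Theorem~\ref{main.thm.2} to identify each mapping spectrum with $\holims \colima \H(E\bG^{gm,s}_a\times _{G_a}(\oSpec \, k \times \cY_a), \bK_{\ell})$ (resp.\ with $\oSpec \, F_a$ in place of $\oSpec \, k$), and then invokes Theorem~\ref{main.thm1.1} to conclude. No gaps.
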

\begin{proof} Let $\bY=\{\cY_a|a \eps \A \}$ denote the inverse system of smooth schemes obtained from the tower
$E{\rm G}_{\oF}^C$. Then the last theorem identifies the left-hand-side (right-hand-side) upto weak-equivalence with
$\holims \colima \H(E\bG^{gm,s}_a\times _{G_a}( \oSpec \, k \times \cY_a), \bK_{\ell})$ ($ \holims \colima 
\H(E\bG^{gm,s}_a\times _{G_a}(  \oSpec \,  F_a, \times \cY_a), \bK_{\ell})$, \res).
 Then the assertion that the map represented
by the arrow is a weak-equivalence is Theorem ~\ref{main.thm.1}.
\end{proof}

\section{Proof of Theorem ~\ref{Carl.conj}}

\begin{proof} Let $\rmG_{\oF}$ be denoted $\rmG$ throughout the following discussion. Then, the main result of ~\cite[Proposition 9.1]{C13} provides the weak-equivalences 
\vskip .3cm
\be \begin{align}
\label{key.weak.eqs.0}
(K(\oSpec \, k, {\rm G})_l) \compl_{I_{\rmG}} &\simeq \H_{{\rm G}}(\oSpec \, k, \bK _{\ell}) \simeq 
\Map( \oSpec \, k \times_{\rmG} E\rmG^C, \bK _{\ell}) \mbox{ and }\\
(K(\oSpec \, \bar F, {\rm G}) _l) \compl_{I_{\rmG}} &\simeq \H_{{\rm G}} (\oSpec \, \bar F, \bK _{\ell}) \simeq 
\Map( \oSpec \, \bar F \times_{\rmG} E\rmG^C, \bK _{\ell}) \notag \end{align} \ee
\vskip .3cm \noindent
in both rows. Here the subscript $l$ denotes any of the  spectra considered in Theorem ~\ref{main.thm.1} with $E$ denoting the
 spectrum representing algebraic K-theory, which is denoted $\bK$. 
Finally Theorem ~\ref{main.thm.1} provides a weak-equivalence between the right-hand-sides of the above 
weak-equivalences and one uses
Galois descent to identify $K(\oSpec \, \bar F, {\rm G})$ with $K(\oSpec \, F)$. This proves all but the last weak-equivalence:
$(K(\oSpec \,  F)_l)  \compl_{I_{\rmG}} \simeq K(\oSpec \, F) _l$. This follows from Proposition ~\ref{Sigma.vs.KG} below.
These prove a mod-$l$ version of the required result. 
\vskip .3cm
 Since the subscript $l$ refers
to the mod-$l$ spectra which are defined as the partial derived completion with respect to $\rho_l: \Sigma \ra H(\Z/{\ell})$, one may now take 
the homotopy
inverse limit of the partial completions to obtain the derived completion with respect to $\rho_l$. This identifies
the term $\holimm K(\oSpec \, \bar F, {\rm G})\compl_{\rho_l,m} $ with $K(\oSpec \, F) \compl_{\rho_l}$.
The term 

\[\holimm (K(\oSpec \, k, {\rm G})_{l,m}) \compl_{I_{\rmG}} = \holimm (K(\oSpec \, k, {\rm G}) \compl_{\rho_l,m}) \compl_{I_{\rmG}}\]
\[= \holimm \holimn(( K(\oSpec \, k, {\rm G}_{\oF}) \compl_{\rho_l,m}) \compl_{I_{\rmG},n}) = \holimm (K(\oSpec \, k, {\rm G}_{\oF}) \compl_{\rho_l,m}) \compl_{I_{\rmG},m} \]. 

That this identifies with the derived
completion $K(\oSpec \, k, {\rm G}_{\oF}) \compl_{I_{G,l}}$ follows from Corollary ~\ref{IGl}.
\vskip .3cm
\end{proof}
\begin{remark} Observe that 
\[\Map( \oSpec \, k \times_{\rmG} E\rmG^C, \bK _{\ell}) =  \colima \Map(\oSpec \, k \times_{\rmG_a} \cY_a, \bK_{\ell}) \mbox{ and }\]
\[ \Map( \oSpec \, \bar F \times_{\rmG} E\rmG^C, \bK _{\ell}) =  \colima \Map(\oSpec \, F_a \times_{\rmG_a} \cY_a, \bK_{\ell})
\]
Assume that $\bK_{\ell} = \bK {\overset m {\overbrace {{{ {\overset L {\underset {\Sigma} \wedge}} H(Z/{\ell})}} \cdots {{ {\overset L {\underset {\Sigma} \wedge}} H(Z/{\ell})}}}}}$
for some $m$.  
Since the spectrum $\bK_{\ell}$ has Nisnevich excision, it has Nisnevich descent and therefore 
 \[\colima \Map(\oSpec \, k \times_{\rmG_a} \cY_a, \bK_{\ell}) \simeq \colima \Map(\oSpec \, k \times_{\rmG_a} \cY_a, \bK){\overset m {\overbrace {{{ {\overset L {\underset {\Sigma} \wedge}} H(Z/{\ell})}} \cdots {{ {\overset L {\underset {\Sigma} \wedge}} H(Z/{\ell})}}}}} \mbox{ and}\]
\[\colima \Map(\oSpec \, F_a \times_{\rmG_a} \cY_a, \bK_{\ell}) \simeq \colima \Map(\oSpec \, F_a \times_{\rmG_a} \cY_a, \bK){\overset m {\overbrace {{{ {\overset L {\underset {\Sigma} \wedge}} H(Z/{\ell})}} \cdots {{ {\overset L {\underset {\Sigma} \wedge}} H(Z/{\ell})}}}}}.  \]
\end{remark}
\begin{proposition}
 \label{Sigma.vs.KG} Let $M \eps Mod(K(\oSpec \, \ok, \rmG))$, where $G$ is either a profinite group or an algebraic group. Assume that
the augmentation ideal $I_{\rmG}= ker(R(\rmG) \ra \Z)$ acts trivially on $\pi_*(M)$. Then the obvious map $M  \ra M \compl_{I_{\rmG}}$
is a weak-equivalence.
\end{proposition}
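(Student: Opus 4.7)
The plan is to use Carlsson's cobar model for the derived completion. Writing $\varepsilon: R = K(\oSpec\, \ok, \rmG) \to S = K(\oSpec\, \ok)$ for the forgetful augmentation (whose induced map on $\pi_0$ has kernel $I_{\rmG}$), the derived completion $M\compl_{I_{\rmG}}$ is the homotopy limit of the totalizations of the cobar cosimplicial spectrum
\[
[n] \mapsto M \wedge_R S^{\wedge_R (n+1)},
\]
with $M \to M\compl_{I_{\rmG}}$ being the coaugmentation. The objective is to prove this coaugmentation is a weak-equivalence under our hypothesis.

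First, I would observe that the hypothesis says exactly that $\pi_*(M)$, viewed as a $\pi_*(R)$-module, factors through the quotient $\pi_*(R)/I_{\rmG}$. Since this quotient is precisely the image of $\pi_*(\varepsilon)$ inside $\pi_*(S)$, the hypothesis is the $\pi_*$-level shadow of $M$ being an $S$-module. For an honest $S$-module the cobar construction admits an extra codegeneracy, forcing its totalization to collapse onto the coaugmentation. To promote this to a homotopical statement, I would run the Bousfield--Kan homotopy spectral sequence of the cobar cosimplicial spectrum: its $E_1$-page is computable from Künneth data for $\pi_*(M \wedge_R S^{\wedge_R (n+1)})$, and the vanishing of $I_{\rmG}\cdot\pi_*(M)$ makes the cosimplicial abelian group $\pi_q(M \wedge_R S^{\wedge_R (\bullet+1)})$ constantly equal to $\pi_q(M)$ with all coface maps the identity. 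The spectral sequence then collapses and yields $\pi_q(M) \cong \pi_q(M\compl_{I_{\rmG}})$.

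The main obstacle is justifying that the $\pi_*$-level hypothesis suffices to control the derived smash products appearing in each cobar stage, since in general derived tensor products are not determined by $\pi_*$ alone. I would handle this by a Postnikov filtration argument: decompose $M$ into Eilenberg--MacLane building blocks, for each of which the claim reduces to the classical algebraic fact that the $I$-adic derived completion of a module annihilated by $I$ is the module itself; then reassemble via the convergent spectral sequence. Passing from stage-wise equivalences in the tower of partial totalizations to an equivalence on the full homotopy limit requires the vanishing of $\lim{}^1$, but this is automatic here because every stage in the tower is already equivalent to $M$ via the coaugmentation, so the tower is Mittag--Leffler trivially, completing the proof.
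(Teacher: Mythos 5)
Your overall strategy --- showing that the hypothesis forces the cobar construction computing $M\compl_{I_{\rmG}}$ to collapse onto its coaugmentation --- is the right idea and is in the same spirit as the paper's proof. But two of your key steps do not hold as stated. First, the claim that the cosimplicial abelian group $\pi_q(M\wedge_R S^{\wedge_R(\bullet+1)})$ is constant with identity cofaces is false: the smash products are derived, so each cobar stage carries higher Tor contributions (already $\pi_*(S\wedge_R S)$ is $\mathrm{Tor}^{\pi_*R}_*(\pi_*S,\pi_*S)$ under K\"unneth hypotheses, not $\pi_*S$), and the hypothesis that $I_{\rmG}$ kills $\pi_*(M)$ does not make these vanish. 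Second, your closing Mittag--Leffler argument is circular: you assert that every partial totalization $\mathrm{Tot}_n$ is already equivalent to $M$ via the coaugmentation, but that is precisely the collapse you are trying to prove; it would follow from an extra codegeneracy at the level of spectra, of which you admit you only have the ``$\pi_*$-level shadow.''

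The genuinely missing ingredient is the mechanism for promoting the $\pi_*$-level statement (``$I_{\rmG}$ acts trivially on $\pi_*(M)$'') to an honest splitting of the coaugmentation of the cobar construction. The paper does this in two moves: it first invokes the spectral sequence of \cite[Theorem 7.1]{C08} to reduce the computation of $M\compl_{I_{\rmG}}$ to the purely algebraic derived completion of the $R(\rmG)$-module $\pi_*(M)$; there the hypothesis makes $\pi_*(M)$ an honest $\Z=R(\rmG)/I_{\rmG}$-module, and the ring retraction $\Z\ra R(\rmG)\ra\Z$ induces maps of triples $S\ra T\ra S$ between the identity completion and the $I_{\rmG}$-completion, whence the two completions agree by \cite[Theorem 2.15]{C08}. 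Your Postnikov-tower substitute gestures at the same reduction, but as written it conflates the derived completion along the ring-spectrum map $K(\oSpec\,\ok,\rmG)\ra K(\oSpec\,\ok)$ with the $I$-adic completion of a discrete module (the Eilenberg--MacLane pieces of $M$ are modules over $H(R(\rmG))$, not over $K(\oSpec\,\ok,\rmG)$, so the ``classical algebraic fact'' does not apply to them directly), and it does not address commuting the totalization, which is a homotopy limit, past the Postnikov filtration. To repair the argument you would need either the cited spectral sequence or an explicit extra codegeneracy/retraction at the level of the cosimplicial spectrum, not merely on homotopy groups.
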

\begin{proof} The proof is essentially in \cite[Corollary 5.3]{C13}. Making use of the spectral sequence in 
\cite[Theorem 7.1]{C08}, one
 immediately reduces to the case where $M \eps Mod(R(G))$. In this situation, one first observes the commutative diagram:
\vskip .3cm
\[\xymatrix{{\Z} \ar@<1ex>[r] \ar@<-1ex>[d]^{id} & {R(G)} \ar@<1ex>[r] \ar@<-1ex>[d]^{\bar I_{\rmG}} & {\Z} \ar@<1ex>[d]^{id} \\
{\Z} \ar@<1ex>[r]^{id} & {\Z} \ar@<1ex>[r] ^{id} &{\Z}}\]
\vskip .3cm \noindent
where the maps in the top row are the obvious ones. By working in the category of diagrams of dg-algebras one may find 
 a dg-algebra ${\widetilde {\Z}}_{R(\rmG)}^{\bullet}$ which is a cofibrant replacement of $\Z$ by dg-algebras over $R(G)$
provided with maps $ {\Z} \ra {\widetilde {\Z}}_{R(G)}^{\bullet}$ and ${\widetilde {\Z}}_{R(\rmG)}^{\bullet} \ra 
{\Z}$
whose composition is the identity. Now let $S$ ($T$) denote the triple defined on $Mod(\Z)$ sending $N$ to 
$N$
(sending $M$ to $M {\underset {R(\rmG)} \otimes} {\widetilde {\Z/{\ell}}}_{R(\rmG)}^{\bullet}$, \res).
Then the above commutative diagram provides maps of triples $S \ra T \ra S$. Observe that the cosimplicial objects defined
by the triple $S$ ($T$) defines the derived completion with respect to the identity map $\Z \ra \Z$ ($I_{\rmG}$, \res). It follows from \cite[Theorem 2.15]{C08} that these
cosimplicial objects define weakly equivalent homotopy inverse limits.
\end{proof}
\vskip .3cm
Let $l$ denote a fixed prime different from the characteristic of the base field $\ok$.
Let ${\widetilde {H(\Z/{\ell})}}$ denote a cofibrant replacement of $H(\Z/{\ell})$ in the category of commutative algebra
 spectra over the $S^1$-sphere spectrum $\Sigma$. Let $S= \oSpec \, \ok$ and let ${\widetilde {K(S)}}$ denote a cofibrant replacement of
$K(S)$ in the category of commutative algebra spectra over $K(S, \rmG)$ where $\rmG$ denotes either an algebraic group
or an inverse system of such groups. Let $Mod(K(S, \rmG))$ 
($Mod(K(S, \rmG){\underset {\Sigma} \wedge} {\widetilde {H(\Z/{\ell})}})$,
 $Mod({\widetilde {K(S)}})$ and $Mod(K(S){\underset {\Sigma} \wedge} {\widetilde {H(\Z/{\ell})}})$)
denote the category of module-spectra over the commutative ring spectrum $K(S, G)$ 
($K(S, \rmG){\underset {\Sigma} \wedge} {\widetilde {H(\Z/{\ell})}})$,
 ${\widetilde {K(S)}}$ and ${\widetilde {K(S)}}{\underset {\Sigma} \wedge} {\widetilde {H(\Z/{\ell})}}$, \res).
\vskip .3cm
Let $\rho_l: K(S, \rmG) \ra K(S, \rmG){\underset {\Sigma} \wedge} {\widetilde {H(\Z/{\ell})}}$ denote
the map induced by the mod-$l$ reduction $\Sigma \ra \H(\Z/{\ell})$.
 Let $I_{\rmG}:K(S, \rmG) \ra {\widetilde {K(S)}}$ denote the obvious map induced by the restriction
$K(S, \rmG) \ra K(S)$. Clearly these are maps of commutative ring spectra. We will also let 
$\rho_l$ denote the map 
${\widetilde {K(S)} }\ra {\widetilde {K(S)}} {\underset {\Sigma} \wedge} {\widetilde {\H(\Z/{\ell})}}$ induced by $\rho_l$ while
$I_{\rmG}$ will also denote the map ${\widetilde {K(S)}} \ra {\widetilde {K(S)}}{\underset {\Sigma} \wedge} {\widetilde {\H(\Z/{\ell})}}$
induced by the mod$-l$ reduction map $\Sigma \ra \H(\Z/{\ell})$.
\begin{proposition} 
\label{rhol.IG}
Then pull-back and push-forward by $\rho_l$ defines  triples: $\rho_{l*} \circ \rho_l^*:Mod(K(S, \rmG)) \ra Mod(K(S, \rmG)$
 and $\rho_{l*} \circ \rho_l^*:Mod({\widetilde {K(S)}}) \ra Mod({\widetilde {K(S)}})$. Similarly pull-back and push-forward by $I_{\rmG}$
defines triples: $I_{G*} \circ I_{\rmG}^*:Mod(K(S, \rmG)) \ra Mod(K(S, \rmG)$ and $I_{G*} \circ I_{\rmG}^*:Mod(K(S, \rmG){\underset {\Sigma} \wedge} {\widetilde {H(\Z/{\ell})}}) \ra
Mod(K(S, \rmG){\underset {\Sigma} \wedge} {\widetilde {H(\Z/{\ell})}})$. 
\vskip .3cm \noindent
Then $\rho_{l*}$ commutes with $I_{G*}$ and $I_{\rmG}^*$ while  $\rho_l^*$ commutes with $I_{\rmG}^*$ and $I_{G*}$.
\end{proposition}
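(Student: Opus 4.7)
The plan is to reduce all four commutativities to the single observation that the square of commutative ring spectra
\[
\xymatrix{
K(S,\rmG) \ar[r]^-{\rho_l} \ar[d]_{I_{\rmG}} & K(S,\rmG){\underset {\Sigma}\wedge}\widetilde{H(\Z/{\ell})} \ar[d]^{I_{\rmG}} \\
\widetilde{K(S)} \ar[r]^-{\rho_l} & \widetilde{K(S)}{\underset {\Sigma}\wedge}\widetilde{H(\Z/{\ell})}
}
\]
is commutative, and is in fact a homotopy pushout (i.e.\ homotopy cocartesian) in the category of commutative ring spectra over $\Sigma$. Commutativity is immediate from the construction of the two variants of $\rho_l$ (as $-{\underset {\Sigma}\wedge}\rho_l$ applied to $K(S,\rmG)$ and to $\widetilde{K(S)}$) together with the fact that $I_{\rmG}$ is a map of $\Sigma$-algebras. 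For the pushout property, I would exhibit $\widetilde{K(S)}{\underset {\Sigma}\wedge}\widetilde{H(\Z/{\ell})}$ as the derived smash product $\widetilde{K(S)}{\underset {K(S,\rmG)} \wedge^L}(K(S,\rmG){\underset {\Sigma}\wedge}\widetilde{H(\Z/{\ell})})$, using that $\widetilde{K(S)}$ is a cofibrant commutative $K(S,\rmG)$-algebra and $\widetilde{H(\Z/{\ell})}$ is a cofibrant commutative $\Sigma$-algebra (this is why the tildes were introduced in the first place).

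With the square in hand, the two commutativities involving only push-forwards are formal: $\rho_{l*}$ and $I_{G*}$ are both restrictions of scalars, and restriction of scalars along a composite equals the composite of restrictions. Concretely, the two functors obtained by traversing the square of $_*$'s from the lower-right corner to the upper-left corner agree, as each is restriction of scalars along the common composite $K(S,\rmG)\to \widetilde{K(S)}{\underset {\Sigma}\wedge}\widetilde{H(\Z/{\ell})}$. Dually, the commutativity of the two pull-backs $\rho_l^*$ and $I_{\rmG}^*$ is obtained by the same argument applied to base-change along the commutative square, using associativity/commutativity of the derived smash product.

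The two mixed commutativities (namely $\rho_{l*}$ versus $I_{\rmG}^*$, and $\rho_l^*$ versus $I_{G*}$) are genuine base-change statements. For any $M\in\mathrm{Mod}(K(S,\rmG){\underset {\Sigma}\wedge}\widetilde{H(\Z/{\ell})})$ I would compute
\[
I_{\rmG}^*\rho_{l*}(M)=\widetilde{K(S)}{\underset {K(S,\rmG)}\wedge^L}M,
\]
and, using the pushout identification from the first paragraph,
\[
\rho_{l*}I_{\rmG}^*(M)=\bigl(\widetilde{K(S)}{\underset {\Sigma}\wedge}\widetilde{H(\Z/{\ell})}\bigr){\underset {K(S,\rmG){\underset {\Sigma}\wedge}\widetilde{H(\Z/{\ell})}}\wedge^L}M\simeq \widetilde{K(S)}{\underset {K(S,\rmG)}\wedge^L}M,
\]
where in the last step I substitute the pushout decomposition and cancel the inner smash. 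This is the Beck--Chevalley / derived flat base-change identity for a homotopy cocartesian square of commutative ring spectra; the analogous computation handles $\rho_l^*$ versus $I_{G*}$.

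The main obstacle I anticipate is the verification that the square is genuinely homotopy cocartesian when computed with the chosen cofibrant replacements, so that the derived smash products above compute the strict smash products. In concrete terms, one must check that the map $K(S,\rmG){\underset {\Sigma}\wedge}\widetilde{H(\Z/{\ell})}\to \widetilde{K(S)}{\underset {\Sigma}\wedge}\widetilde{H(\Z/{\ell})}$ is obtained from $I_{\rmG}$ by a cofibrant base-change, which in turn requires that $\widetilde{H(\Z/{\ell})}$ is cofibrant over $\Sigma$ and that $\widetilde{K(S)}$ is cofibrant over $K(S,\rmG)$ in the relevant model category of commutative algebra spectra. Granting those cofibrancy conditions (which is exactly what the tilde decorations were introduced to guarantee), the argument above goes through unchanged.
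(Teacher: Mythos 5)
Your proposal is correct and follows essentially the same route as the paper: both rest on the commutative square of ring spectra whose bottom-right corner is identified with $\widetilde{K(S)}{\underset {K(S,\rmG)}\wedge}\bigl(K(S,\rmG){\underset {\Sigma}\wedge}\widetilde{H(\Z/{\ell})}\bigr)$, and both reduce the four commutativities to associativity/cancellation of smash products over the intermediate ring, your explicit base-change computation for $\rho_{l*}$ versus $I_{\rmG}^*$ being exactly the step the paper records as ``one may see that $\rho_{l*}I_{\rmG}^*(M)$ is also the same object.'' The only organizational difference is that the paper obtains the commutation of the two push-forwards by passing to right adjoints from the already-established commutation of the two pull-backs, whereas you observe directly that both are restrictions of scalars along a common composite; both arguments are immediate.
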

\begin{proof}\
 Given an $M \eps Mod(K(S, \rmG))$, 
\vskip .3cm
\[I_{\rmG}^*(\rho_l^*(M)) = (M {\underset {K(S, \rmG)} \wedge} K(S, \rmG) {\underset {\Sigma} \wedge} {\widetilde {\H(\Z/{\ell})}})
 {\underset {K(S, \rmG) {\underset {\Sigma} \wedge} {\widetilde {\H(\Z/{\ell})}}} \wedge} {\widetilde {K(S)}}{\underset {\Sigma} \wedge} {\widetilde {\H(\Z/{\ell})}}\]
\vskip .3cm
\[\simeq M{\underset {K(S, \rmG)} \wedge} {\widetilde {K(S)}}{\underset {\Sigma} \wedge} {\widetilde {\H(\Z/{\ell})}} = \rho_l^*(I_{\rmG}^*(M)).\]
\vskip .3cm \noindent
This proves that $\rho_l^*$ commutes with $I_{\rmG}^*$. Observe that $I_{G*}$ sends an $N \eps
Mod ({\widetilde {K(S)}})$ to $N$ viewed as a module-spectrum over $K(S, G)$ using the map $K(S, \rmG) \ra {\widetilde {K(S)}}$.
Now $\rho_l^*$ sends 
\[I_{G*}(N) \mbox{ to } N{\underset {K(S, \rmG)} \wedge} K(S, \rmG) {\underset {\Sigma} \wedge} {\widetilde {\H(\Z/{\ell})}} = N {\underset {\Sigma} \wedge} {\widetilde {\H(\Z/{\ell})}}.\]

On the other hand $\rho_l^*$ sends $N$ to $N{\underset {K(S)} \wedge} K(S) {\underset {\Sigma} \wedge} {\widetilde {\H(\Z/{\ell})}} = N {\underset {\Sigma} \wedge} {\widetilde {\H(\Z/{\ell})}}$.
Therefore, $I_{G*} (\rho_l^*(N)) $ is the above spectrum viewed as a module spectrum over $K(S, G){\underset {\Sigma} \wedge} {\widetilde {\H(\Z/{\ell})}}$ using the
 map $K(S, \rmG) \ra {\widetilde {K(S)}}$. This proves $\rho_l^*$ commutes with $I_{G*}$. 
Since $\rho_l^*$ commutes with $I_{\rmG}^*$, their right-adjoints, $\rho_{l*}$ and $I_{G*}$ commute. 
If $M \eps Mod(K(S, \rmG){\underset {\Sigma} \wedge} {\widetilde {H(\Z/{\ell})}})$, $I_{\rmG}^*\rho_{l*}(M)$ is
$M{\underset {K(S, \rmG)} \wedge} {\widetilde {K(S)}}$ where one views $M$ first as a module over $K(S, \rmG)$.
One may see that $\rho_{l*}I_{\rmG}^*(M) $ is also the same object, which proves  that $\rho_{l*}$ commutes with $I_{\rmG}^*$.
\end{proof} 
\begin{corollary}
 \label{IGl}
Let $M \eps Mod(K(S, \rmG))$. Then one obtains a weak-equivalence:
$\holimm (M \compl_{\rho_l,m}) \compl_{I_{\rmG},m} \simeq M\compl_{I_{G,l}}$.
\end{corollary}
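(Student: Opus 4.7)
The plan is to identify both sides as homotopy limits of essentially the same cosimplicial spectrum. Recall that for a map $\phi: A \to B$ of commutative ring spectra and an $A$-module $M$, the derived completion $M\compl_\phi$ is the homotopy limit of the cosimplicial cobar construction $C^\bullet_\phi(M)$ associated to the triple $T_\phi := \phi_*\phi^*$, and the partial completion $M\compl_{\phi,m}$ is its $m$-th partial totalization (see \cite{C08}). Throughout, let $T_\rho := \rho_{\ell *}\rho_\ell^*$ and $T_I := I_{\rmG *}I_\rmG^*$ denote the two triples on $Mod(K(S,\rmG))$ of interest.

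First I would observe that, since $I_{\rmG,\ell} = \rho_\ell \circ I_\rmG$ as a composite of maps of commutative ring spectra, the associated triple factors as
\[ T_{I_{\rmG,\ell}} = (I_{\rmG,\ell})_*(I_{\rmG,\ell})^* \simeq I_{\rmG *}\rho_{\ell *}\rho_\ell^* I_\rmG^* \simeq I_{\rmG *} I_\rmG^* \rho_{\ell *}\rho_\ell^* = T_I \circ T_\rho, \]
where the middle equivalence uses the commutativity $\rho_{\ell *} I_\rmG^* \simeq I_\rmG^* \rho_{\ell *}$ established in Proposition \ref{rhol.IG}. Thus the triple for $I_{\rmG,\ell}$ is the composite of two commuting triples.

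Next I would realize the left-hand side as the totalization of a bicosimplicial object. Applying $T_I$ levelwise to $C^\bullet_\rho(M)$ produces a bicosimplicial object $C^{\bullet,\bullet} := C^\bullet_I\bigl(C^\bullet_\rho(M)\bigr)$, and since partial totalization commutes with the finite iterates $T_I^{n+1}$ (as these are compositions of exact functors), one gets
\[ (M\compl_{\rho_\ell,m})\compl_{I_\rmG,m} \simeq \mathrm{Tot}_m^I\,\mathrm{Tot}_m^\rho\,C^{\bullet,\bullet}. \]
Passing to $\holimm$ then identifies the left-hand side of the corollary with the total homotopy limit of $C^{\bullet,\bullet}$ over $\Delta\times\Delta$.

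Finally I would use the commutation relations of Proposition \ref{rhol.IG} to show that the diagonal of $C^{\bullet,\bullet}$, namely $n \mapsto T_I^{n+1}T_\rho^{n+1}(M)$, is equivalent as a cosimplicial object to the cobar construction $C^\bullet_{I_{\rmG,\ell}}(M)$ for the composite triple $T_I\circ T_\rho$. A Fubini-type theorem for homotopy limits of bicosimplicial spectra (in the spirit of \cite[Theorem 2.15]{C08}) then gives
\[ \holimm (M\compl_{\rho_\ell,m})\compl_{I_\rmG,m} \simeq \holim_{\Delta\times\Delta} C^{\bullet,\bullet} \simeq \holim_\Delta C^\bullet_{I_{\rmG,\ell}}(M) = M\compl_{I_{\rmG,\ell}}, \]
as required. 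The main obstacle will be the last comparison: showing that the diagonal of the bicosimplicial cobar for two commuting triples is equivalent as a cosimplicial object (not merely levelwise) to the cobar of the composite triple. This requires tracking the cosimplicial face and codegeneracy maps through the interchange isomorphisms of Proposition \ref{rhol.IG} and checking that these assemble coherently; once this bookkeeping is done, the Fubini principle for $\holim$ delivers the conclusion.
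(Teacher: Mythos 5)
Your proposal is correct and follows essentially the same route as the paper: both identify the left-hand side with the (partial) totalizations of the bicosimplicial object obtained by composing the two cobar constructions, pass to the diagonal, and use the commutation relations of Proposition~\ref{rhol.IG} to identify the diagonal with the cobar construction of the composite triple $\rho_{l*}I_{\rmG*}I_{\rmG}^{*}\rho_{l}^{*}$, i.e.\ the derived completion along $I_{\rmG,\ell}$, with the holim interchange supplied by \cite{C08}. The ``bookkeeping'' step you flag at the end is exactly the point the paper handles via Proposition~\ref{rhol.IG}.
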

\begin{proof} Observe that 
\vskip .3cm
$M \compl_{\rho_l,n} \compl_{I_{\rmG},m} = 
\holim_{\Delta_{\le m} }\{{\overset i {\overbrace {(I_{G*}\circ I_{\rmG}^*) \circ \cdots \circ(I_{G*} \circ I_{\rmG}^*)}} |i \le m} \}
(\holim _{\Delta_{\le n}}\{{\overset j {\overbrace {(\rho_{l*}\circ  \rho_l^*) \circ \cdots \circ (\rho_{l*} \circ \rho_l^*)}}(M)|j \le n}\}) $.
\vskip .3cm
By fixing an $i \le m$ and taking the homotopy inverse limit on varying the $j \le n$, one sees by an application
of \cite[Proposition 2.9]{C08}, that the above term identifies upto weak-equivalence with
\vskip .3cm
$\holim_{\Delta_{\le m}} \holim _{\Delta_{\le n}}\{{\overset i {\overbrace {(I_{G*}\circ I_{\rmG}^*) \circ  \cdots \circ (I_{G*} \circ I_{\rmG}^*)}} |i \le m} \}
(\{{\overset j {\overbrace {(\rho_{l*}\circ \rho_l^*) \circ \cdots \circ (\rho_{l*} \circ \rho_l^*)}}(M)|j \le n}\})$.
\vskip .3cm
This is the iterated homotopy inverse limit of the double cosimplicial object
\vskip .3cm
$\{{\overset i {\overbrace {(I_{\rmG*}\circ I_{\rmG}^*) \circ  \cdots \circ (I_{\rmG*} \circ I_{\rmG}^*)}} |i \le m} \}
(\{{\overset j {\overbrace {(\rho_{l*}\circ \rho_l^*) \circ \cdots \circ (\rho_{l*} \circ \rho_l^*)}}(M)|j \le n}\})$
\vskip .3cm \noindent
truncated to degrees $\le m, n$. The corresponding diagonal cosimplicial object is given by
\vskip .3cm
$\{{\overset i {\overbrace {(I_{\rmG*}\circ I_{\rmG}^*) \circ  \cdots \circ (I_{\rmG*} \circ I_{\rmG}^*)}} 
{\overset i {\overbrace {(\rho_{l*}\circ \rho_l^*) \circ \cdots \circ (\rho_{l*} \circ \rho_l^*)}}}(M)|i \ge 0} \}$.
\vskip .3cm
Clearly  one may replace  $\holimm \circ \holimn$ applied to 
\vskip .3cm
$\holim_{\Delta_{\le m}} \holim _{\Delta_{\le n}}\{{\overset i {\overbrace {(I_{\rmG*}\circ I_{\rmG}^*) \circ  \cdots \circ (I_{\rmG*} \circ I_{\rmG}^*)}} |i \le m} \}
(\{{\overset j {\overbrace {(\rho_{l*}\circ \rho_l^*) \circ \cdots \circ (\rho_{l*} \circ \rho_l^*)}}}(M)|j \le n\})$
\vskip .3cm \noindent
by  $\holimm$ applied to 
\vskip .3cm
$\holim_{\Delta_{\le m}}(\{\overset i {\overbrace {(I_{\rmG*}\circ I_{\rmG}^*) \circ  \cdots \circ (I_{\rmG*} \circ I_{\rmG}^*)}} 
{\overset i {\overbrace {(\rho_{l*}\circ \rho_l^*) \circ \cdots \circ (\rho_{l*} \circ \rho_l^*)}}(M)|i \le m} \})$.
\vskip .3cm
Making use of the last proposition, the truncated cosimplicial object  
\vskip .3cm
$\{\overset i {\overbrace {(I_{\rmG*}\circ I_{\rmG}^*) \circ  \cdots \circ (I_{\rmG*} \circ I_{\rmG}^*)}} 
{\overset i {\overbrace {(\rho_{l*}\circ \rho_l^*) \circ \cdots \circ (\rho_{l*} \circ \rho_l^*)}}(M)|i \le m} \}$
\vskip .3cm \noindent
identifies with
\vskip .3cm
$\{\overset i {\overbrace {(\rho_{l*}{I_{\rmG*}}I_{\rmG}^* \rho_l^*) \circ \cdots \circ (\rho_{l*}{I_{\rmG*}}I_{\rmG}^* \rho_l^*)}}(M)|i \le m\}$
\vskip .3cm \noindent
Observe that the triple $(\rho_{l*}{I_{\rmG*}}I_{\rmG}^* \rho_l^*)$ defines the derived completion with respect to the
map $K(S, \rmG) \ra {\widetilde {K(S)}}{\underset {\Sigma} \wedge} {\widetilde {\H(\Z/{\ell})}}$, i.e. the derived
completion with respect to $I_{\rmG,l}$. This completes the proof of the corollary.
\end{proof}

\vskip .3cm

%%%%%%%%%%%%%%%%%%%%%%%%%%%%%%%%%%%%%%%%%%%%%%%%%%%%%%%%%%%%%%%%%%%%%
%%%%%%%%%%%%%%%%%%%%%%%%%%%%%%%%%%%%%%%%%%%%%%%%%%%%%%%%%%%%%%%%%%%%%
%% REFERENCES
%%%%%%%%%%%%%%%%%%%%%%%%%%%%%%%%%%%%%%%%%%%%%%%%%%%%%%%%%%%%%%%%%%%%%

%%%%%%%%%%%%%%%%%%%%%%%%%%%%%%%%%%%%%%%%%%%%%%%%%%%%%%%%%%%%%%%%%%%%%
\vskip .4cm


\begin{thebibliography}{MMMMM}
\frenchspacing

\bibitem[AM]{AM} M. Artin and B. Mazur, \textit{\'Etale Homotopy}, Lecture Notes
in Mathematics, \textbf{100}, (1969)



\bibitem[AS]{AS}
M.~Atiyah, G.~Segal: Equivariant K-theory and completion, J. Diff. Geom., \textbf{3}, (1969)1-18.



\bibitem[Board]{Board} J. M. Boardman, \textit{Conditionally convergent spectral sequences},
 In Homotopy invariant algebraic structures (Baltimore, MD, 1998), 49–84,
Contemp. Math., 239, Amer. Math. Soc., Providence, RI, 1999. 

\bibitem[BF]{BF}A. K. Bousfield and E. Friedlander, \textit{Homotopy Theory of
$\Gamma$-spaces, spectra and bisimplicial sets}, Lect. Notes in Math., \textbf{
658}, (1977), 80-130



\bibitem [B-K] {B-K} A. K. Bousfield and D. M. Kan, \textit{ Homotopy limits,
completions and
localizations}, Springer Lect. Notes, {304}, (1972)



\bibitem[Bl]{Bl} B. Blander, \textit{Local Projective model structures on simplicial
presheaves}, K-Theory, \textbf{24},283-301, (2001).

\bibitem [C08]{C08} G. Carlsson, \textit{Derived completions in stable
homotopy theory}, J. Pure Appl. Algebra, 212, (2008), no, 3, 550-577


\bibitem [C11]{C11} G. Carlsson, \textit{Derived Representation Theory and
the Algebraic K-theory of fields}, Journal of Topology, \textbf{4}, 543-572,   (2011).

\bibitem [C13]{C13} G. Carlsson, \textit{A geometric reformulation of the representational assembly conjecture for the K-theory of fields with
pro-$\ell$ absolute Galois group}, axrXiv:1309.5649v1,  (2013).


\bibitem[CJ1]{CJ1} G. Carlsson and R. Joshua, \textit{Equivariant motivic homotopy theory}, preprint, (2011). (Updated (2013).)



\bibitem[Dund1]{Dund1} B. Dundas, O. Rondigs, P. Ostaver, \textit{Enriched functors
and stable homotopy theory}, Documenta. Math, \textbf{8}, (2003), 409-488.

\bibitem[Dund2]{Dund2} B. Dundas, O. Rondigs, P. Ostaver, \textit{Motivic functors}, Documenta. Math, \textbf{8}, (2003), 489-525.


\bibitem[EH]{EH} D. A. Edwards and H. M. Hastings, \textit{Cech and Steenrod
homotopy theories with applications to geometric topology}, Lect. Math.,
\textbf{542}, Springer, (1976)



\bibitem[Fr]{Fr}E. Friedlander, \textit{\'Etale homotopy of simplicial schemes},
Annals of Math Study,  \textbf{104}, (1982)


\bibitem[Fausk]{Fausk} H. Fausk, \textit{Equivariant homotopy theory for
pro-spectra}, Geometry and Topology, \textbf{12}, (2008), 103-176.



\bibitem[Hirsch]{Hirsch} P. Hirschorn, \textit{ Localization of Model categories},
AMS, (2002).


\bibitem[Hov-1]{Hov-1} M. Hovey, \textit{Model categories}, AMS Math surveys and
monographs, {\bf 63}, AMS, (1999)


\bibitem[Hov-2]{Hov-2} M. Hovey, \textit{Spectra and symmetric spectra in
general model categories}, JPAA, \textbf{165}, Issue 1, 23 November 2001, Pages 63–127.

\bibitem[Isak]{Isak} D. Isaksen, \textit{Strict model structures for
pro-categories},  Categorical decomposition techniques in algebraic topology (Isle of Skye, 2001), 179–198,
Progr. Math., 215, Birkhäuser, Basel, 2004. 


\bibitem[J01]{J01}R. Joshua, \textit{Mod-$l$ algebraic K-theory and Higher Chow
groups of linear varieties}, Camb. Phil. Soc. Proc., 130, (2001), 37-60


\bibitem[J02]{J02} R. ~Joshua, \textit{Derived functors for maps of simplicial
spaces}, JPAA, {171},  (2002), 219-248

\bibitem[J09]{J09} R. ~Joshua, \textit{Motivic $E_{\infty}$-algebras and the motivic dga},
Preprint, (2011).



\bibitem[K]{K} A. Krishna, \textit{Completion Theorem for equivariant K-theory}, Preprint, (2012).
arXiv: 1201.5766v1

\bibitem[Lev]{Lev}M. Levine, \textit{The motivic coniveau tower}, The Journal of Topology, \textbf{1}, (2008), 217-267.

\bibitem[L-K]{L-K}M. Levine and B. Kahn, \textit{Motives of Azumaya algebras}, Journal of The Institute of Mathematics of Jussieu, \textbf{9}, no. 03, pp. 481-599, (2010).

\bibitem[Mi]{Mi} J. Milne, \textit{\'Etale Cohomology}, Princeton University Press, (1980).

\bibitem [MV] {MV} F. Morel and V. Voevodsky, \textit{${\mathbb A}^1$-homotopy
theory of schemes}, I. H. E. S Publ. Math., {90}, (1999), 45--143 (2001)

\bibitem[Pel]{Pel} P. Pelaez, \textit{Multiplicative properties of the slice filtration}, Ph. D thesis,
Northeastern Univ, (2008).

\bibitem[PY]{PY} I. Panin and S. Yagunov, \textit{Rigidity for orientable functors}, Jour. Pure and Applied Algebra,
 \textbf{172}, 49-77, (2002).

\bibitem[RB]{RB} A. Radulescu-Banu, \emph{Cofibrance and completion}, Thesis, M.I.T. arXiv.org:math.AT/0612203, 1999.

\bibitem[RZ]{RZ} L. Ribes and P. Zalesskii, \emph{Profinite groups}, Erg. der Math, Folge 3, \textbf{40},
Springer, (2000).

\bibitem[SS]{SS} S. Schwede and B. Shipley, \emph{Algebras and modules in monoidal model categories}, Proc. London. Math Soc,.
\textbf{3, 80}, (2000), 491-511.



\bibitem [Sw]{Sw} R. Switzer, \textit{Algebraic Topology: Homology and
Homotopy}, Grund. Math, Springer.

\bibitem[Sus]{Sus}A. Suslin, \textit{ On the K-theory of algebraically closed
fields}, Invent. Math., { 73}, (1983), 241-245

\bibitem[Th-Tr]{Th-Tr} R. Thomason and T. Trobaugh, \textit{Higher Algebraic K-theory of schemes and of derived categories}, 
Grothendieck Festschrift, \textbf{3}, Birkhauser, (1990).


\bibitem[Tot]{Tot} B. Totaro, \textit{ The Chow ring of a classifying space},
Algebraic K-theory (Seattle, WA, 1997), 249-281, Proc. Symposia in Pure Math,
{\bf 67}, AMS, Providence, (1999) 

\bibitem[Yag]{Yag} S. Yagunov, \textit{ Rigidity II: The non-orientable case   }, Doc. Math., 
\textbf{9}, (2004) 29-40.

\bibitem[Voev1]{Voev1} V. Voevodsky, {\sl $\bA^1$-homotopy theory\/},
Proceedings of the International Congress of Mathematicians, {\bf 1},
(Berlin, 1998), Doc. Math., (1998), 579-604.

\bibitem[Voev2]{Voev2} V. Voevodsky, {\it Open problems in motivic stable homotopy theory},
Preprint, (2000).

\bibitem[Voev3]{Voev3} V. Voevodsky, {\it Voevodsky's Nordfjordeid Lectures: Motivic Homotopy Theory},
in Motivic Homotopy Theory, \textit{Universitext}, Springer, (2003).

\bibitem[V-S-F]{vsf} V. Voevodsky, A. Suslin and E. Friedlander, \textit{Cycles,
Transfers and Motivic cohomology theories}, Annals of Math. Study,
{143}, (2000)

\bibitem[Wald]{Wald} F. Waldhausen, \textit{Algebraic K-theory of spaces }, Lect. Notes in Mathematics, 
\textbf{1126}, (1985),318-419, Springer.


\end{thebibliography}
\end{document}